\newtheorem{theorem}{Theorem}
\newtheorem{lemma}{Lemma}
\newtheorem{proposition}{Proposition}
\newtheorem{claim}{Claim}
\newcommand{\qq}{\quad\quad}
\newcommand{\abs}[1]{\left|#1\right|}
\newcommand{\pr}[1]{\left(#1\right)}
\newcommand{\n}[2]{{\left\| #1 \right\|}_{#2}}
\newcommand{\f}[2]{\frac{#1}{#2}}
\newcommand{\lan}[1]{\left\langle #1\right\rangle}
\newcommand{\wh}[1]{\widehat{#1}}
\newcommand{\wt}[1]{\widetilde{#1}}
\newcommand{\tn}[1]{\textnormal{#1}}
\newcommand{\al}{\alpha}
\newcommand{\be}{\beta}
\newcommand{\ga}{\gamma}
\newcommand{\Ga}{\Gamma}
\newcommand{\de}{\delta}
\newcommand{\ve}{\varepsilon}
\newcommand{\la}{\lambda}
\newcommand{\si}{\sigma}
\newcommand{\vp}{\varphi}
\newcommand{\br}{{\mathbb R}}
\newcommand{\bt}{{\mathbb T}}
\newcommand{\bz}{\mathbb Z}
\newcommand{\cn}{\mathcal N}
\newcommand{\cs}{\mathcal S}
\newcommand{\cf}{\mathcal F}
\newcommand{\crr}{\mathcal R}
\newcommand{\ch}{\mathcal H}
\newcommand{\p}{\partial}
\newcommand{\med}{\textnormal{med}}
\newcommand{\ds}{\displaystyle}
\newcommand{\sgn}{\text{sgn}}
\numberwithin{equation}{section}
\title{Stabilization of dispersion-generalized Benjamin-Ono}
\author{Cynthia Flores, Seungly Oh, Derek L. Smith }
\begin{document}

\begin{abstract}
In this article, we examine $L^2$ well-posedness and  stabilization property of the dispersion-generalized Benjamin-Ono equation with periodic boundary conditions.  The main ingredient of our proof is a development of dissipation-normalized Bourgain space, which gains smoothing properties simultaneously from dissipation and dispersion within the equation. We will establish a bilinear estimate for the derivative nonlinearity using this space and prove the linear observability inequality leading to small-data stabilization. 
\end{abstract}
\maketitle
 
\section{Introduction}
We study dispersion generalized Benjamin-Ono (DGBO) equations with periodic boundary conditions given by
\begin{equation}\label{eq:dg}
\partial_t u + D^\al \p_x u + u\partial_x u = 0,    \qquad t\in\br, x \in \bt,
\end{equation}
where $\al\in (1,2)$ and $\bt = [-\pi,\pi]$ is a torus.  The endpoint $\al=1$ corresponds to the periodic Benjamin-Ono equation, and $\al=2$ corresponds to the well-known periodic Korteweg-de Vries equation.  In this sense, \eqref{eq:dg} defines a continuum of equations of dispersive strength intermediate to two celebrated models. The classical KdV and BO equations arise as models in one-dimensional wave propagation; the former modeling surface waves in a shallow, narrow canal and the second modeling the propagation of internal waves in a stratified fluid of infinite depth. %The family of equations \eqref{eq:dg} models vorticity waves in a coastal zone \cite{MR1422759}. 

One difficulty with DGBO models is the strength of the nonlinearity relative to dispersion.  This hinders bilinear Strichartz estimates, which are used to establish local well-posedness via contraction.  In case of $\al=2$ \cite{MR1215780, MR1329387} or $\al>2$ \cite{MR2784352}, dispersion is sufficient to establish the bilinear Strichartz estimates, but the analogue fails for any $\al<2$. In fact, for DGBO \eqref{eq:dg} on $\br$, Molinet, Saut and Tzvetkov \cite{MR1885293} showed that the solution map fails to be $C^2$, indicating that perturbative methods of directly establishing local well-posedness via bilinear estimates would fail.

Extensive work has been completed on the DGBO on $\br$.  To work around the problem of lack of bilinear estimates, energy methods or modifications of initial function spaces have been used in the following.  Ginibre and Velo \cite{MR1122309} proved the existence of global weak solutions.   Colliander, Kenig and Steffilani \cite{MR2029909} proved  a well-posedness statement using weighted spaces; and Herr \cite{MR2379733} showed a local well-posedness in a Sobolev space by imposing a low-frequency restriction and using the contraction principle.   Kenig, Ponce and Vega \cite{MR1086966} used energy method to show that \eqref{eq:dg} is locally well-posed on $H^s(\br)$ for $s \geq \f{3}{4} (3-\al)$.  Guo \cite{MR2860610} improved this range to $s> 2-\al$; and Herr, Ionescu, Kenig and Koch \cite{MR2754070} improved this to $s\geq 0$ using a para-differential renormalization technique.  This last result naturally extended to the global well-posedness since $L^2$~norm is conserved during DGBO evolution.

The well-posedness theory on the periodic domain is not as complete.   Molinet and Vento \cite{MR3397003} worked with equations where $D^\al \p_x$  in \eqref{eq:dg} is replaced by a general class of dispersive symbols.  The authors proved that these class of equations are locally well-posed in $H^s$ for $s\geq 1- \f{\al}{2}$ both on the real line and the torus.

%We are interested in stabilization of the periodic DGBO equations as well as well-posedness properties.  Stabilization of dispersive equations occurs by imposing a localized damping to the equation.  When damping is not localized, it is often immediately obvious that the solution will dissipate over time exponentially fast.  However, if damping only occurs over an arbitrarily small region, exponential stability is difficult to recover.  A combination of a good bilinear estimate and observability inequality \eqref{eq:observ} is needed to show the stabilization property.   The main difficulty in our case is the lack of a good bilinear estimate in context of \eqref{eq:dg}.

We are interested in stabilization of the periodic DGBO equations as well as well-posedness properties. For the periodic KdV equation, stabilization was first proved by Komornik, Russell and Zhang \cite{MR1108503}. Russell, Zhang \cite{MR1214759, MR1360229}; Laurent, Rosier and Zhang \cite{MR2753618} extended this result and also proved controllability of the equation.  For the Benjamin-Ono equation, stabilization is more difficult due to a lack of the bilinear Strichartz estimate.  In \cite{MR3335395}, Linares and Rosier proved a local stabilization in $H^s(\bt)$ for $s>\f{1}{2}$ and semi-global stabilization in $L^2(\bt)$ inserting dissipation into the control term and utilizing propagation of regularity to obtain a smoothing effect on the whole domain. In \cite{MR3401014}, Laurent, Linares and Rosier extended this result to a global $L^2$ stabilization without dissipation by using Tao's gauge transform \cite{MR2052470} and bilinear estimates proved by Molinet and Pilod \cite{MR2970711}.

In this article, we investigate the stabilization of a locally-damped variant of \eqref{eq:dg} given by 
\begin{equation}\label{eq:dgbo}
\left\{\begin{array}{l} \p_t v + D^\al \p_x v + GD^{\be}G v = \p_x (v^2), \qquad x \in \bt, t>0,\\
v\vert_{t=0} = v_0 \in H^s(\bt)
\end{array}\right.
\end{equation}
where $\al \in (1,2]$, $\be \in (2-\al, \al)$. The definition
\begin{equation} \label{ctrl}
(Gh)(x,t) := g(x)\left(h(x,t) - \int_\bt g(y)h(y,t) dy\right)
\end{equation}
ensures that the equation conserves volume.
We consider the fixed function $g \in C^\infty(\bt)$, which is nonnegative and satisfies $\ds 2\pi[g] = \int_\bt g(x)dx = 1$, to be a localizing function supported on an arbitrary interval on $\bt$.  Note that $\p_t \int_\bt v\,dx = 0$ for any smooth solution $v$ of \eqref{eq:dgbo}.  Further, from the self-adjoint property of $G$,  $L^2$ norm of any smooth solution is non-increasing due to the identy
\[
\p_t \n{v}{L^2_x (\bt)}^2 = - \int_\bt \left|D^{\f{\be}{2}} G v\right|^2 \, dx.
\]

To place our work in context, we closely examine two previously mentioned works on KdV \cite{MR2753618} and Benjamin-Ono \cite{MR3335395} equations.  Although a better result for Benjamin-Ono exists in \cite{MR3401014}, we do not relate our article to this result due to the difficulty in applying gauge transformation in DGBO contexts.  In both articles, stabilization arguments heavily relied on the recovery of one-derivative in the nonlinearity $v^2$.  For the Benjamin-Ono equation, this was achieved by adding a dissipative derivative $GDG$  within the localized damping term.  This dissipative derivative was used to recover one derivative in the nonlinearity.  On the other hand, a bilinear Strichartz estimate in Bourgain space  \cite{MR1215780}  was sufficient to recover one derivative for KdV.

For the intermediate range $1<\alpha<2$, it would be possible to obtain the local results of \cite{MR3335395} with the dissipative $GDG$ control term by adapting the smoothing property from \cite{MR1122309} to the periodic setting.  However, our approach applies to $L^2$ solutions, as well as bridges the transition between the Benjamin-Ono and the KdV results in the sense that the dissipative derivative is reduced to zero as $\alpha \uparrow 2$.  To this end, we develop a functional space with a Bourgain-type weight, yet with a normalized dissipative derivative built in.  %This is a slight modification of the well-known  Bourgain space, and we will carefully develop the theories behind this tool.  

Earlier,  Molinet and Ribaud \cite{MR1889080, MR1920630, MR1918236} developed a dissipative Bourgain norm to handle multilinear estimates for semilinear equations with mixed dispersion and dissipation.  Here, the authors introduced complex factors to the Bourgain weight to account for dissipative smoothing.  Our approach to defining the dissipative Bourgain weight will go through a normalization, which appears to be very effective in this case.  Also, our setting is distinct from earlier cited references because the dissipation in \eqref{eq:dgbo} is localized, making a Fourier-analytic approach more challenging.

We will prove the bilinear estimate in Section~\ref{sec:bilinear}.  In doing so, we will establish the global well-posedness of \eqref{eq:dgbo} in $L^2$ with mean-zero condition with intermediate dissipation $GD^{\beta}G$ with $\beta > 2 - \alpha$.

\begin{theorem}\label{th:lwp}
Let $\al \in (1,2]$ and $2-\al < \be < \al$.   Then \eqref{eq:dgbo} is locally and globally well-posed for initial data $v_0\in L^2$ given $\int_\bt v_0 = 0$.  More specifically, given any $T>0$, there exists a unique solution $v \in Z^{\f{1}{2}+}_T \hookrightarrow C^0_t ([0,T];L^2 (\bt))$.  Further, the solution map $v_0 \in L^2(\bt) \mapsto v(t)\in C^0_t ([0,T]; L^2 (\bt))$ is uniformly continuous within a bounded set in $L^2$.
\end{theorem}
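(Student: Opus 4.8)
The plan is to solve \eqref{eq:dgbo} by a contraction mapping argument in the dissipation-normalized Bourgain space $Z^{\f{1}{2}+}_T$. Writing $W(t)$ for the linear propagator generated by $\p_t + D^\al \p_x + GD^\be G$ --- which is well-defined and contractive on $L^2$ since $D^\al\p_x$ is skew-adjoint while $GD^\be G$ is self-adjoint and nonnegative --- I would recast \eqref{eq:dgbo} in Duhamel form
\begin{equation*}
v(t) = W(t)v_0 + \int_0^t W(t-s)\,\p_x(v^2)(s)\,ds =: \Phi(v)(t),
\end{equation*}
and seek a fixed point of $\Phi$ on a ball in $Z^{\f{1}{2}+}_T$ of radius comparable to $\n{v_0}{L^2}$.

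The argument rests on two families of estimates. The first are the linear estimates: the homogeneous bound $\n{\eta\, W(t)v_0}{Z^{\f{1}{2}+}} \lesssim \n{v_0}{L^2}$ and the companion inhomogeneous (Duhamel) bound $\n{\eta\int_0^t W(t-s)F(s)\,ds}{Z^{\f{1}{2}+}} \lesssim \n{F}{\cz'}$, where $\eta$ is a smooth temporal cutoff and $\cz'$ is the companion space in which the forcing $F$ is measured. The second is the bilinear estimate of Section~\ref{sec:bilinear}, which supplies the crucial gain of a full derivative,
\begin{equation*}
\n{\p_x(uw)}{\cz'} \lesssim \n{u}{Z^{\f{1}{2}+}}\,\n{w}{Z^{\f{1}{2}+}},
\end{equation*}
and whose validity is precisely what forces the hypothesis $\be > 2-\al$. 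Combining these on $[0,T]$ yields $\n{\Phi(v)}{Z^{\f{1}{2}+}_T} \lesssim \n{v_0}{L^2} + T^\theta \n{v}{Z^{\f{1}{2}+}_T}^2$ for some $\theta>0$ gained from time localization, so that for $T$ small (depending only on $\n{v_0}{L^2}$) the map $\Phi$ stabilizes a ball and contracts it; the analogous difference estimate, using $u^2 - w^2 = (u+w)(u-w)$, yields uniqueness and the uniform continuity of the data-to-solution map on bounded sets.

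Globalization is then immediate from the dissipation identity $\p_t \n{v}{L^2_x(\bt)}^2 = -\n{D^{\f{\be}{2}}Gv}{L^2_x(\bt)}^2 \le 0$ recorded above: the $L^2$ norm is non-increasing along the flow, so the local existence time --- which depends only on $\n{v_0}{L^2}$ --- never shrinks, and one continues the solution over $[0,T]$ for arbitrary $T>0$ by iterating the local construction.

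The main obstacle is that the damping $GD^\be G$ is \emph{localized}: because $G$ is multiplication (up to a mean correction) by the cutoff $g$, the operator $GD^\be G$ is not a Fourier multiplier, so $W(t)$ does not act diagonally in frequency and the standard complex-weight device of Molinet and Ribaud does not apply directly. The difficulty is therefore concentrated in the design of the normalized dissipative weight defining $Z^{\f{1}{2}+}$: it must register the dissipative smoothing even though $G$ mixes frequencies, and the linear and bilinear estimates above must be established for this non-diagonal propagator. Capturing the required derivative gain simultaneously from dispersion (the Bourgain weight) and from the localized dissipation --- and verifying that the threshold $\be > 2-\al$ suffices to absorb $\p_x$ --- is the technical heart of the construction.
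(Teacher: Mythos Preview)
Your overall scheme is right, but there is a genuine gap at precisely the point you flag as the main obstacle. You propose to run the contraction directly with the full propagator $W(t) = e^{-(D^\al\p_x + GD^\be G)t}$ and say the difficulty is absorbed into ``the design of the normalized dissipative weight.'' That is not how the argument closes, and as stated your plan does not furnish the needed estimates: the weight defining $Z^b$ is purely a function of $(\tau,k)$ and the dispersive symbol $L_k = k|k|^\al$ --- it does not see $G$ at all --- so there is no mechanism in the weight itself that accounts for the frequency mixing in $GD^\be G$. Without something further, neither your homogeneous bound $\n{\eta\, W(t)v_0}{Z^{\f12+}} \lesssim \n{v_0}{L^2}$ nor the companion Duhamel bound has a proof.

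The missing idea is to \emph{extract the diagonal part} of the localized damping. Expanding $GD^\be G v$ in Fourier coefficients and isolating the contribution where input and output frequencies coincide yields $GD^\be G v = \wt{D^\be}\, v + \cn_1[v] + \crr[v]$, where $\wt{D^\be}$ is the genuine Fourier multiplier $v_k \mapsto c_k v_k$ with $c_k = \sum_m |m|^\be |g_{m-k}|^2 \sim \lan{k}^\be$ (Claim~\ref{cl:ck}), and $\cn_1$, $\crr$ collect the off-diagonal remainder. One then builds the contraction around the \emph{diagonal} semigroup $S(t)$ given by $\cf[S(t)f]_k = e^{-iL_k t - c_k|t|} f_k$, for which the $Z^b$ homogeneous and Duhamel estimates (Propositions~\ref{pro:free} and \ref{pro:duhamel}) are honestly Fourier-analytic, and treats $\cn_1[v]$ and $\crr[v]$ as additional linear forcing terms on the right-hand side alongside $\p_x(v^2)$. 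These off-diagonal pieces need their own estimates (Lemmas~\ref{le:gdg} and \ref{le:easy}); in particular $\cn_1$ is controlled via the dispersive gain $\max\bigl(\lan{\f{\tau-L_k}{\lan{k}^\be}},\lan{\f{\tau-L_n}{\lan{n}^\be}}\bigr) \gtrsim \max(\lan{k},\lan{n})^{\al-\be}$ for $n\neq k$, which is where the hypothesis $\be<\al$ enters. The paper does eventually establish $Z^b$ estimates for $W(t)$ itself (Lemmas~\ref{le:dg_lin_smooth} and \ref{le:lrz}), but only \emph{after} the $S(t)$-based machinery is in place, by writing $W(t)$ in Duhamel form against $S(t)$; they are not the starting point of the contraction.
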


\textit{Remark:} Above well-posedness result is established via a contraction in $Z^b$, which will be defined in Section~\ref{sec:prelim}.   For this article, we restrict our attention to the class of initial values satisfying the mean-zero property, but these statements can be easily extended via the transformation: $v(t,x) \mapsto v(t, x-ct)$  for an appropriate constant $c$.

Further, we have a small-data stabilization theorem in the following:

\begin{theorem}\label{th:stability}
Let $\al \in (1,2]$ and $2-\al <\be< \al$.   Then there exists $0<\de\ll 1$ and $\la>0$ such that, if $v_0\in L^2$ with $\int_\bt v_0 = 0$ and $\n{v_0}{L^2} <\de$, then the solution $v$ of \eqref{eq:dgbo} satisfies  
\[
\n{v(t)}{L^2_x} \leq e^{-\la t} \n{v_0}{L^2_x}. 
\]
\end{theorem}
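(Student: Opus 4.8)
The plan is to deduce exponential decay from the $L^2$ dissipation identity together with an observability inequality, to obtain the observability by a compactness--uniqueness argument at the linear level, and to transfer it to the nonlinear flow using smallness of the data and the bilinear estimate. First I would record the energy identity, which for the nonlinear flow reads
\[
\n{v(T)}{L^2}^2 = \n{v_0}{L^2}^2 - \int_0^T \n{D^{\be/2}Gv}{L^2}^2\,dt,
\]
the nonlinearity contributing nothing since $\int_\bt v\,\p_x(v^2)\,dx = 0$. Consequently it suffices to establish an observability inequality: there exist $T>0$, $\de>0$ and $C\ge 1$ such that every solution with $\n{v_0}{L^2}\le\de$ satisfies
\[
\n{v_0}{L^2}^2 \le C \int_0^T \n{D^{\be/2}Gv}{L^2}^2\,dt.
\]
Combining the two bounds gives $\n{v(T)}{L^2}^2 \le (1-C^{-1})\n{v_0}{L^2}^2$; since the equation is autonomous and the norm is non-increasing (so the smallness $\n{\cdot}{L^2}\le\de$ persists), iterating on the intervals $[nT,(n+1)T]$ yields $\n{v(nT)}{L^2}\le (1-C^{-1})^{n/2}\n{v_0}{L^2}$, and interpolating with monotonicity across each interval produces $\n{v(t)}{L^2}\le e^{-\la t}\n{v_0}{L^2}$ for a suitable $\la>0$.

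Second, I would prove the \emph{linear} observability for $w$ solving $\p_t w + D^\al\p_x w + GD^\be G w = 0$. Arguing by contradiction, suppose it fails; then there is a sequence of data $w_0^n$ with $\n{w_0^n}{L^2}=1$ and $\int_0^T\n{D^{\be/2}Gw^n}{L^2}^2\,dt\to0$. Using the smoothing built into the dissipation-normalized Bourgain space $Z^b$, which gains derivatives from both dispersion and the localized dissipation, I would extract a subsequence converging strongly in $L^2([0,T]\times\bt)$ to a limit $w$ that solves the same linear equation and satisfies $D^{\be/2}Gw\equiv0$ on $[0,T]$. Since $D^{\be/2}$ annihilates only the zero mode, $Gw$ is spatially constant, hence vanishes off $\mathrm{supp}\,g$; as $g$ is supported on a proper subinterval this forces $Gw\equiv 0$, so $w$ equals its $g$-average on $\mathrm{supp}\,g$ for all $t$. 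In particular $w$ solves the free equation $\p_t w + D^\al\p_x w = 0$ and is constant on the open set $\mathrm{supp}\,g$ throughout $[0,T]$.

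The crux is then a unique continuation property: a free solution that is constant on a nonempty open spatial interval for all $t\in[0,T]$ must be globally constant, hence identically zero by the mean-zero condition, contradicting $\n{w_0^n}{L^2}=1$ in the limit. Because the symbol $D^\al\p_x$ is nonlocal for $\al\in(1,2)$, standard Holmgren or Carleman arguments do not apply directly, and I expect this to be the main obstacle. I would approach it spectrally, decomposing $w$ into eigenmodes of the skew-adjoint generator $D^\al\p_x$ (whose eigenfunctions are the exponentials $e^{inx}$ with distinct frequencies $\om_n = -\operatorname{sgn}(n)|n|^{\al+1}$) and exploiting the resulting time quasi-periodicity: the vanishing of $w-[w]_g$ on an interval for all $t$ forces, mode by mode, that each Fourier coefficient of the spatial profile vanishes on that interval, and since a single nonzero frequency cannot be annihilated on an open set this yields triviality.

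Finally, to pass from linear to nonlinear observability I would write $v$ via Duhamel with the same data and estimate the difference $v-w$ in $Z^b$ using the bilinear estimate of Section~\ref{sec:bilinear}. For $\n{v_0}{L^2}\le\de$ this difference is quadratically small in $\de$, so the nonlinear observation $\int_0^T\n{D^{\be/2}Gv}{L^2}^2\,dt$ differs from its linear counterpart by a term that is $o(\n{v_0}{L^2}^2)$ and thus absorbable into the constant. Choosing $\de$ small enough then upgrades the linear observability to the required nonlinear inequality, completing the argument.
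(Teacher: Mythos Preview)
Your proposal is correct and follows essentially the same strategy as the paper: linear observability for the damped equation is obtained by a compactness--uniqueness argument resting on the smoothing of $Z^b$, the unique continuation property for the free flow is handled spectrally (the paper formalizes your ``mode by mode'' idea via Ingham's inequality, producing a biorthogonal family in $L^2[0,T]$ that isolates each time frequency $k|k|^\al$ and forces $e^{ikx}f_k=0$ on the interval; you should apply it to $\p_x w$ since $w$ is only constant, not zero, on $\mathrm{supp}\,g$), and the passage to the nonlinear problem uses the bilinear estimate together with smallness of the data.

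The one organizational difference is in the final step. You aim to upgrade the \emph{observability inequality} itself from linear to nonlinear by estimating $\int_0^T\|D^{\be/2}G(v-w)\|^2\,dt$ and the cross term via $\|v-w\|_{Z^b_T}\lesssim\|v_0\|_{L^2}^2$. The paper instead first converts linear observability into exponential decay of the semigroup $W(t)$, then writes the nonlinear solution as $v(T)=W(T)v_0+\int_0^T W(T-s)\p_x(v^2)\,ds$ and bounds the Duhamel term in $Z^b_T$ (hence in $C_t L^2_x$) by $C_T\|v\|_{Z^b_T}^2\lesssim C_T\|v_0\|_{L^2}^2$, so that $\|v(T)\|_{L^2}\le \tfrac12 e^{-\la'T}\|v_0\|_{L^2}+C_T\|v_0\|_{L^2}^2$. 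Both routes are valid; the paper's avoids estimating the observation functional on the difference $v-w$ and instead leans directly on the embedding $Z^b\hookrightarrow C_t L^2_x$, which is marginally cleaner, while yours stays closer to the control-theoretic template and makes the role of observability more transparent.
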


\textit{Remark:} The proof of this theorem follows from stabilization of the associated linear equation combined with the contraction principle. An application of Ingham's inequality yields a linear unique continuation property (Proposition~\ref{pro:ucp}), which yields linear stabilization via an observability argument. The existence if an observability inequality for the nonlinear equation would remove the small-data condition above.

Our discussion is organized as follows.  In Section~\ref{sec:prelim}, we will define necessary notations and the functional space to be used.  In Section~\ref{sec:bourgain}, we will prove key estimates for the new functional space.  Sections~\ref{sec:lin} and \ref{sec:bilinear} focus on proving necessary linear and bilinear estimates to control the right side of the equation.  In Section~\ref{sec:th1}, we will prove Theorem~\ref{th:lwp}.  Section~\ref{sec:observ} will deal with linear stabilization by proving the key observability inequality, and Section~\ref{sec:th2} will provide the proof of Theorem~\ref{th:stability}.  Lastly, Appendix~\ref{appendix} contains a brief discussion of $A_p$~weight theories, which is used in Section~\ref{sec:bourgain} within proofs.

\section{Preliminaries}\label{sec:prelim}
\subsection{Notations}
We adopt the standard notation in approximate inequalities as follows:
By $A \lesssim B$, we mean that there exists an absolute constant $C>0$ with $A \leq CB$.  $A \ll B$ means that the implicit constant is taken to be a \emph{sufficiently} large positive number.  For any number of quantities $\alpha_1, \ldots, \alpha_k$, $A\lesssim_{\alpha_1, \ldots, \alpha_k} B$ means that the implicit constant depends only on $\alpha_1, \ldots, \alpha_k$.
Finally, by $A\sim B$, we mean $A\lesssim B$ and $B\lesssim A$.

We indicate by $\eta$ a smooth time cut-off function which is supported on $[-2,2]$ and equals $1$ on $[-1,1]$.  Notations here will be relaxed, since the exact expression of $\eta$ will not influence the outcome. 

For any normed space~$\mathcal{Y}$, we denote the quantity $\|\cdot\|_{\mathcal{Y}_T}$ by the expression 
\[
\|u\|_{\mathcal{Y}_T} = \inf\{ \n{v}{\mathcal{Y}}:\, v(t) \equiv u(t), \tn{ for } t\in [0,T]\}.
\]

Fourier coefficients, Fourier transforms and their inverses are denoted as follows:
\[
 \cf[f]_k = f_k := \int_{\mathbf{T}} f(x) e^{-ik x}\, dx, \qquad
 \wh{u}(\tau) = \int_{\mathbf{R}} u(t) e^{-i t\tau} \, dt.
 \]

Also, we define $\langle k \rangle := (1+|k|^2)^{\f{1}{2}}$ and denote $L^2_0(\bt) := \{u \in L^2(\bt) : \int_\bt u=0 \}$.

\subsection{Functional Space}

%%%%%%%%%%%%%%%%%%%
%%% Zsb Definition
%%%%%%%%%%%%%%%%%%%
In this section, we develop a new type of Fourier-restriction space denoted $Z^{b}$.  This space is largely motivated by the class of functional space developed by Bourgain \cite{MR1209299, MR1215780}. Conventionally, Bourgain space is used to gain smoothing via dispersion.  On the other hand, the space introduced in this section contains a factor of normalized dissipation so that smoothing can be gained from both dissipation and dispersion simultaneously.  This is the main novelty of our method.

Given a linear dispersive symbol $L_k$, define the norm
\[
\n{u}{Z^{b}} := \begin{cases} \ds \n{\lan{k}^{b\be} \lan{\f{\tau- L_k}{\lan{k}^{\be}}}^b \wh{u}_k (\tau)}{L^2_\tau l^2_k(\br \times \bz^*)} &\tn{ if } b\in \pr{-\f{1}{2}, \f{1}{2}},\\[20pt]
 \ds \n{\lan{k}^{\sgn(b)\f{\be}{2}} \lan{\f{\tau- L_k}{\lan{k}^{\be}}}^b \wh{u}_k (\tau)}{L^2_\tau l^2_k(\br \times \bz^*)} &\tn{ otherwise.}\end{cases} 
\]
where $\bz^* = \bz \setminus \{0 \}$.  In context of DGBO, the symbol $L_k$ is $k|k|^\alpha$.

By construction, the dual of this space is given as
$(Z^{b})^* = Z^{-b}$ if $L_k$ is odd in $k$.  Otherwise, $(Z^{b})^* = \overline{Z^{-b}}$.

\subsection{Equation set-up}
We begin our set-up by examining the localized damping $GD^\be G$.  Simple computations show
\begin{align*}
GD^\be Gv &= g\,\pr{ D^\be Gv - \int_\bt g(y) D^\be Gv(y)\,dy}\\
&=  \pr{ g\,  D^\be  (g\,v) - \int g\, D^\be (g\, v) }+ \crr[v]
\end{align*}
where $\crr$ is a bounded operator in $L^2 (\bt)$.  Now, we examine the main dissipative term $g\,  D^\be  [g\,v]$.  For $k\neq 0$,
\begin{align*}
\cf[g(x) D^\be  [g(x)v(x)]]_k  &= \sum_{m,n}|m|^\be \, g_{k-m}  \,g_{m-n}\, v_n\\
&= \sum_{m}  |m|^\be\, g_{k-m} g_{m-k} \, v_k + \sum_{m}\sum_{n\neq k}|m|^\be\, g_{k-m}  \,g_{m-n}\, v_n\\
&=: c_k v_k + \cn_1 [v]_k. 
\end{align*}

From the expression above, the main dissipation occurs when $n=k$, i.e. the diagonal frequency.  All off-diagonal frequencies ($n\neq k$) will be treated as a perturbation and estimated on the RHS of the equation.   In that spirit, we will denote the second term on the RHS above by $\mathcal{N}_1$. 

In the following, we derive a very useful property of $c_k$.

\begin{claim}\label{cl:ck}
For $k\neq 0$, $c_k \sim_{g,\be} \lan{k}^\be$.
\end{claim}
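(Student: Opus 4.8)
The plan is to reduce the bilateral sum defining $c_k$ to a manifestly nonnegative series and then extract its dominant term. The first and conceptually most important step is to exploit that $g$ is real-valued, so its Fourier coefficients satisfy $g_{-n} = \overline{g_n}$; in particular $g_{k-m} = \overline{g_{m-k}}$, and hence $g_{k-m}g_{m-k} = |g_{m-k}|^2 \ge 0$. Substituting this into the definition and reindexing with $j = m-k$ turns the defining sum into
\[
c_k = \sum_{m} |m|^\be\, g_{k-m}\, g_{m-k} = \sum_{j\in\bz} |k+j|^\be\, |g_j|^2,
\]
a sum of nonnegative terms. This nonnegativity is exactly what makes a clean two-sided bound possible, and is the heart of the argument.

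For the lower bound I would simply discard every term except $j=0$. Since all summands are nonnegative, $c_k \ge |k|^\be |g_0|^2$, and the volume normalization $g_0 = \int_\bt g = 1$ gives $c_k \ge |k|^\be$. For $k \in \bz^*$ we have $|k|\ge 1$, so $\lan{k}^\be = (1+|k|^2)^{\be/2} \le 2^{\be/2}|k|^\be$, whence $c_k \gtrsim_\be \lan{k}^\be$.

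For the upper bound I would use the rapid decay of $(g_j)$ together with an elementary subadditivity estimate. Since $|k|, |j| \le \lan{k}\lan{j}$ and both brackets are at least $1$, one has $|k+j|^\be \le (|k|+|j|)^\be \le 2^\be \lan{k}^\be \lan{j}^\be$, so that
\[
c_k \le 2^\be \lan{k}^\be \sum_{j\in\bz} \lan{j}^\be |g_j|^2.
\]
Because $g\in C^\infty(\bt)$, the coefficients $g_j$ decay faster than any polynomial, so the remaining sum is a finite constant depending only on $g$ and $\be$ (essentially $\n{g}{H^{\be/2}}^2$). This yields $c_k \lesssim_{g,\be}\lan{k}^\be$ and closes the claim.

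As for difficulty, there is no serious obstacle once the reality of $g$ is invoked: the only points requiring any care are that the diagonal coefficient $g_0$ be nonzero --- which is guaranteed precisely by the normalization $\int_\bt g = 1$ --- and that the weighted coefficient sum converge, which is immediate from smoothness. The genuine payoff of the claim is downstream: it shows that the diagonal part of the localized damping $GD^\be G$ behaves, uniformly in $k$, like a true order-$\be$ dissipation $\lan{k}^\be$, which is exactly the smoothing that the normalized dissipative weight $Z^b$ is designed to exploit.
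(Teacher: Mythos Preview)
Your proof is correct and follows essentially the same approach as the paper: both exploit the reality of $g$ to write $c_k$ as a nonnegative series $\sum_j |k+j|^\be |g_j|^2$, take the $j=0$ term for the lower bound (using $g_0=\int_\bt g\neq 0$), and use smoothness of $g$ for the upper bound. The only cosmetic difference is in the upper-bound inequality---the paper splits $|m|^\be \lesssim |m-k|^\be + |k|^\be$, whereas you use the product bound $|k+j|^\be \le 2^\be \lan{k}^\be\lan{j}^\be$---but these are interchangeable and lead to the same conclusion.
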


\begin{proof}
We will use the fact that $g_0 = \int_\bt g \neq 0$ and also that $g$ is a real-valued non-constant function.

Note $g_{k-m} = \overline{g_{m-k}}$ since $g$ is real-valued.  Thus $c_k =\sum_{m} |m|^\be |g_{m-k}|^2$. First, by picking $m=k$, we can show the lower bound $c_k \geq |k|^\be |g_0|^2$ which is non-zero because $g_0 = [g] \neq 0$.

To prove the upper bound,
\[
c_k \leq \sum_m (|m-k|^\beta+|k|^\beta)|g_{m-k}|^2
	\sim \n{g}{H^{\be}}^2 + |k|^\be \n{g}{L^2}^2. 
\]
This proves the claim.
\end{proof}

To summarize, we can write
\[
GD^\be G v = \wt{D^{\be}} v + \cn_1[v] + \crr[v]
\]
where $\wt{D^{\be}}$ is defined via the multiplication of Fourier coefficients by $c_k$.  In light of Claim~\ref{cl:ck}, this approximately acts as a derivative of order~$\be$.

Thus, we can re-write \eqref{eq:dgbo} as
\[
\p_t v + D^\al \p_x v + \wt{D^{\be}} v = -\cn_1[v]- \p_x(v^2) - \crr [v].
\]

Taking Fourier-coefficients of the equation above, we can reformulate \eqref{eq:dgbo} using the variation of parameters,
\begin{align*}
v_k (t) = &e^{-(ik|k|^\al + c_k) t} (v_0)_k\\
 & -\int_0^t e^{-(ik|k|^\al + c_k)(t-s) } \pr{\cn_1[v]_k(s) + (\p_x (v^2))_k(s) + \crr [v]_k(s)}\, ds.
\end{align*}

To prevent a backward parabolic propagation, we place absolute values around time variables associated with the dissipative coefficients $c_k$.
\begin{align}\label{eq:duhamel}
 v_k (t)  = &e^{-ik|k|^\al t - c_k |t|} (v_0)_k \\
 & -\int_0^t e^{-ik|k|^\al(t-s) - c_k|t-s| } \pr{\cn_1[v]_k(s) + (\p_x (v^2))_k(s) + \crr [v]_k(s)}\, ds. \notag
\end{align}

%%%%%%%%%%%%%%%%%%%
%%% Bourgain space estimates
%%%%%%%%%%%%%%%%%%%
\section{Bourgain space estimates}\label{sec:bourgain}
In this section, we establish key linear estimates for $Z^b$.

\begin{proposition} 
We have following continuous embedding properties:
\begin{align}
 Z^{b} &\hookrightarrow Z^{b'} \tn{ for } \forall b\geq b',\label{eq:embed2}\\
Z^{b} &\hookrightarrow C^0(\br; L^2_0(\bt)) \cap L^2(\br; H_0^{\f{\be}{2}}(\bt)) \tn{ if } b>\f{1}{2}.\label{eq:embed1}
\end{align}
\end{proposition}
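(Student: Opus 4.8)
The statement splits into the scale of embeddings \eqref{eq:embed2} and the endpoint embedding \eqref{eq:embed1}, and both follow by comparing the Fourier-side weights defining $Z^b$. Throughout I would write $\lan{\mu_k} := \lan{\f{\tau - L_k}{\lan{k}^\be}}$ for the normalized modulation weight, so that the $Z^b$ weight reads $\lan{k}^{s(b)}\lan{\mu_k}^b$ with spatial exponent $s(b) := \be\,\med\{-\f{1}{2}, b, \f{1}{2}\}$. One checks directly from the two cases in the definition that this is the correct reading: on $|b| < \f{1}{2}$ it is $b\be$, on $|b| \geq \f{1}{2}$ it is $\sgn(b)\f{\be}{2}$, and the two formulas agree at $b = \pm\f{1}{2}$, so $s$ is continuous and non-decreasing in $b$. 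Recasting the whole case distinction into the single monotone function $s$ is what makes the first embedding a one-line matter.

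For \eqref{eq:embed2}, fix $b \geq b'$. Since $\lan{k} \geq 1$ and $\lan{\mu_k} \geq 1$, while $s(b') \leq s(b)$ and $b' \leq b$, the pointwise inequality
\[
\lan{k}^{s(b')}\lan{\mu_k}^{b'} = \lan{k}^{s(b') - s(b)}\lan{\mu_k}^{b'-b}\cdot\lan{k}^{s(b)}\lan{\mu_k}^{b} \leq \lan{k}^{s(b)}\lan{\mu_k}^{b}
\]
holds for every $(k,\tau)$. Taking $L^2_\tau l^2_k$ norms of $\wh{u}_k(\tau)$ against both sides gives $\n{u}{Z^{b'}} \leq \n{u}{Z^b}$ with constant $1$; the only thing to verify is the monotonicity of $s$, which the median description supplies uniformly across all sign configurations of $b, b'$.

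For \eqref{eq:embed1} take $b > \f{1}{2}$, so $s(b) = \f{\be}{2}$. The $L^2_t H^{\be/2}_x$ embedding is immediate: by Plancherel in $t$,
\[
\n{u}{L^2_t H^{\be/2}_x}^2 \sim \n{\lan{k}^{\be/2}\wh{u}_k(\tau)}{L^2_\tau l^2_k}^2 \leq \n{\lan{k}^{\be/2}\lan{\mu_k}^b \wh{u}_k(\tau)}{L^2_\tau l^2_k}^2 = \n{u}{Z^b}^2,
\]
using only $\lan{\mu_k}^b \geq 1$. For the $C^0_t L^2_x$ embedding I would run the standard Bourgain argument: writing $u_k(t)$ as the inverse time-Fourier transform of $\wh{u}_k$ and applying Cauchy--Schwarz in $\tau$ gives
\[
\sup_t |u_k(t)|^2 \lesssim \pr{\int_\br \lan{\mu_k}^{-2b}\,d\tau}\pr{\int_\br \lan{\mu_k}^{2b}|\wh{u}_k(\tau)|^2\,d\tau}.
\]
The decisive point is that the normalization makes the first factor computable: the substitution $\sigma = (\tau - L_k)/\lan{k}^\be$ yields $\int_\br \lan{\mu_k}^{-2b}\,d\tau = \lan{k}^\be \int_\br \lan{\sigma}^{-2b}\,d\sigma \lesssim_b \lan{k}^\be$, the $\sigma$-integral converging precisely because $b > \f{1}{2}$. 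This produces exactly the spatial weight $\lan{k}^{\be/2}$ matching $s(b)$, so summing over $k$ gives $\sup_t \n{u(t)}{L^2_x}^2 \lesssim \sum_k \lan{k}^\be \int_\br \lan{\mu_k}^{2b}|\wh{u}_k|^2\,d\tau = \n{u}{Z^b}^2$.

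Genuine continuity, rather than the mere uniform bound, follows from the same estimate by a routine uniform-convergence argument: each $u_k$ is continuous since the display above shows $\wh{u}_k \in L^1_\tau$, and $\sum_k \sup_t |u_k(t)|^2 \lesssim \n{u}{Z^b}^2 < \infty$ forces the tail of the $l^2_k$ series to be uniformly small in $t$, so the partial sums converge uniformly to a continuous $L^2_x$-valued curve; membership in $L^2_0$ is automatic because the $Z^b$ norm runs over $k \in \bz^*$, whence $u_0(t)\equiv 0$. The only genuinely delicate step is the weight computation $\int_\br \lan{\mu_k}^{-2b}\,d\tau \sim \lan{k}^\be$: it is exactly the dissipative normalization built into $\lan{\mu_k}$ that converts the $L^1_\tau$-to-$\sup_t$ Cauchy--Schwarz loss into the factor $\lan{k}^{\be/2}$ demanded by the spatial weight, so that no derivatives are squandered at the endpoint. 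Everything else is standard Sobolev and Bourgain-space bookkeeping.
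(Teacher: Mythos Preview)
Your proof is correct and follows essentially the same approach as the paper: pointwise comparison of the Fourier weights for \eqref{eq:embed2}, and the Cauchy--Schwarz-in-$\tau$ argument together with the substitution $\sigma = (\tau - L_k)/\lan{k}^\be$ to evaluate the weight integral for \eqref{eq:embed1}. Your formalization via the monotone function $s(b)$ and your explicit uniform-convergence argument for genuine $C^0$-continuity are details the paper leaves implicit, but the structure and the key computation are the same.
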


\textit{Remark:} Using definition, \eqref{eq:embed1} can be rephased as $Z^b_T \hookrightarrow C^0([0,T]; L^2_0(\bt)) \cap L^2([0,T]; H_0^{\f{\be}{2}}(\bt))$ for any $b>\f{1}{2}.$

\begin{proof}
First, \eqref{eq:embed2} follows from definition of the norm. 

Consider, \eqref{eq:embed1}.  The first embedding $Z^{b} \hookrightarrow L^2_t H^{\f{\be}{2}}_x$ directly follows from definition.  To see $Z^{b} \hookrightarrow C^0_t L^2_x$ for $b>\f{1}{2}$, note
\[
\n{v}{C^0_t L^2_x} \leq \n{ \wh{v}_k}{l^2_k L^1_\tau}
\leq \sup_k \n{ \lan{k}^{-\f{\be}{2}} \lan{\f{\tau-L_k}{\lan{k}^{\be}}}^{-b}   }{L^2_\tau}   \n{\lan{k}^{\f{\be}{2}} \lan{\f{\tau-L_k}{\lan{k}^\be}}^b \wh{v}_k}{L^2_\tau l^2_k}.
\]

The second expression on RHS is $\n{v}{Z^{b}}$.  So it suffices so show that the first norm is finite. We have
\begin{equation}\label{eq:emb}
\n{\lan{k}^{-\f{\be}{2}} \lan{\f{\tau- L_k}{\lan{k}^{\be}}}^{-b} }{L^2_\tau}^2 = \lan{k}^{-\be}\int_\br  \lan{\f{\tau- L_k}{\lan{k}^{\be}}}^{-2b} \, d\tau = \int_\br \lan{\tau}^{-2b}\, d\tau, 
\end{equation}
which is finite if $b> \f{1}{2}$.  This completes the proof.
\end{proof}

The next lemma asserts that the free solution is bounded in $Z^b$.  To alleviate notations, introduce the semigroup $S(t)$ defined for any $f\in L^2$ via
\[
\cf[S(t) f]_k =  e^{-i L_k t - c_k |t|}f_k.
\]

\begin{proposition}\label{pro:free}
For $b < \f{3}{2}$,
\[
\n{S(t)f}{Z^{b}} \lesssim_b \n{f}{L^2_x}.
\]
\end{proposition}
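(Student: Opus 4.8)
The plan is to turn the $Z^b$ norm of $S(t)f$ into an explicit one–dimensional integral in $\tau$ for each frequency $k$, and then exploit the central feature of the construction: the dissipation coefficient $c_k$ is comparable to the normalizing weight $\lan{k}^\be$. First I would compute the space-time Fourier transform of $S(t)f$. Because $c_k>0$ by Claim~\ref{cl:ck}, the factor $e^{-c_k|t|}$ makes the $t$-integral absolutely convergent, so no time cutoff is needed, and the elementary computation $\int_\br e^{-i\mu t - c_k|t|}\,dt = \f{2c_k}{c_k^2+\mu^2}$ (with $\mu$ the modulation measured against the dispersion relation) produces the Lorentzian
\[
\wh{S(\cdot)f}_k(\tau) = f_k\,\f{2c_k}{c_k^2 + (\tau - L_k)^2},
\]
a bump in $\tau$ centered at $L_k$ — exactly the center of the $Z^b$ modulation weight — with half-width $c_k$.

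Next I would insert this into the definition of $\n{\cdot}{Z^{b}}$. Since $|f_k|^2$ factors out of the $\tau$–integral and the $l^2_k$ sum, it suffices to bound, uniformly in $k$, the per-frequency integral obtained by pairing the weight against $|\wh{S(\cdot)f}_k(\tau)|^2$. Substituting $\tau - L_k = \lan{k}^{\be}\rho$ removes the dispersion relation entirely, and then invoking Claim~\ref{cl:ck} in the form $c_k = A_k\lan{k}^{\be}$ with $A_k\sim_{g,\be}1$ causes all powers of $\lan{k}$ to cancel: for $b\in[\tfrac12,\tfrac32)$ the prefactor $\lan{k}^{\be}$ combines with the Jacobian and the denominator to leave
\[
\lan{k}^{\be}\int_\br \lan{\tf{\tau-L_k}{\lan{k}^{\be}}}^{2b}\f{4c_k^2}{(c_k^2+(\tau-L_k)^2)^2}\,d\tau = \int_\br \lan{\rho}^{2b}\f{4A_k^2}{(A_k^2+\rho^2)^2}\,d\rho,
\]
which is independent of $k$ up to the harmless constant $A_k\sim1$. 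In the remaining range $b\in(-\tfrac12,\tfrac12)$ the prefactor $\lan{k}^{b\be}$ instead leaves an extra factor $\lan{k}^{\be(2b-1)}\le 1$, so the same integral still controls everything. This exact cancellation is the payoff of the normalization built into the weight, and I regard identifying and executing it cleanly as the conceptual heart of the argument.

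Finally, the threshold $b<\tfrac32$ emerges purely from the integrability at infinity of the reduced integral: the integrand behaves like $\lan{\rho}^{2b-4}$ for large $\rho$, which is integrable precisely when $2b-4<-1$, i.e.\ $b<\tfrac32$; near $\rho=0$ there is no issue since $A_k\sim1$. Summing the uniform bound against $\sum_k|f_k|^2 = \n{f}{L^2_x}^2$ then yields the claim. The point worth emphasizing is that the extra factor $(c_k^2+(\tau-L_k)^2)^{-2}$ supplied by the squared Lorentzian is what widens the admissible range from the usual $b<\tfrac12$ (free evolution in a standard Bourgain space) to $b<\tfrac32$; the dissipation is doing real smoothing work here, and the main obstacle is simply to make sure the matched scaling $c_k\sim\lan{k}^{\be}$ is used correctly so that this gain is captured rather than lost to stray powers of $\lan{k}$.
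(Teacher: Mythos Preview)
Your argument is essentially identical to the paper's: compute the time Fourier transform of $e^{-c_k|t|}$ as a Lorentzian, invoke Claim~\ref{cl:ck} to match $c_k\sim\lan{k}^\be$ against the normalization in the $Z^b$ weight, and observe that the resulting $k$-independent integral behaves like $\int\lan{\rho}^{2b-4}\,d\rho$, convergent iff $b<\tfrac32$. The paper reduces to $b\in(\tfrac12,\tfrac32)$ up front via the embedding \eqref{eq:embed2}, which also covers the range $b\le-\tfrac12$ that your case split does not explicitly address; otherwise the arguments coincide.
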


\textit{Remark:} In purely dispersive settings, a smooth time-cutoff function $\eta(t)$ must be multiplied to the LHS above in order to establish the inequality (c.f. \cite[Lemma 2.8]{MR2233925}).  However, the built-in dissipation in the free-solution in this case makes it square integrable as long as $b$ is not too large.  This upper restriction in $b$ can be removed by imposing a smooth time-cutoff function. 

\begin{proof}
It suffices to prove this statement for $b \in (\f{1}{2},\f{3}{2})$ by the embedding \eqref{eq:embed2}.  Denote $\vp_k$ to be the time Fourier transform of $e^{-c_k |t|}$:
\begin{equation}\label{eq:vp}
\vp_k(\tau) = 2\int_0^\infty e^{-c_k t} \cos(\tau t)\, dt = \textnormal{Re} \left[\f{ 1}{ c_k + i\tau}\right] = \f{c_k}{c_k^2 + \tau^2} =  \f{1}{c_k} \f{1}{1+\pr{\f{\tau}{c_k}}^2}.
\end{equation}

Using Claim~\ref{cl:ck},
\[
\cf_t [e^{-i L_k t - c_k |t|}](\tau) = \vp_k (\tau - L_k) = \f{1}{c_k} \lan{\f{\tau-L_k}{c_k}}^{-2} \sim \lan{k}^{-\be} \lan{ \f{\tau-L_k}{\lan{k}^{\be}}}^{-2}.
\]

Then for $b>\f{1}{2}$, we can write
\begin{align*}
\n{S(t) f}{Z^{b}} 
 &\sim \n{\lan{k}^{-\f{\be}{2}}  \lan{\f{\tau- L_k}{\lan{k}^{\be}}}^{b-2} f_k}{L^2_\tau l^2_k} = \sup_k \n{\lan{k}^{-\f{\be}{2}} \lan{\f{\tau- L_k}{\lan{k}^{\be}}}^{b-2} }{L^2_\tau} \n{ f_k}{l^2_k}.
\end{align*}

The second term is $\n{f}{L^2_x}$.  The computation \eqref{eq:emb}, where $-b$ is replaced with $b-2$, shows that the first term on the RHS above is finite if and only if $2b-4 <-1 \iff b<\f{3}{2}$.  This proves the claim.
\end{proof}

The next proposition shows that the $Z^b$ space defined here inherits a special property for dispersive Bourgain spaces which is used to derive a contraction factor of $T^\ve$.  It is not immediately obvious that this property should carry through in case of the newly defined space $Z^b$.  Thus, we will carefully prove these results here.  In the process, we will need a few items from theories of $A_p$~weights which are listed in Appedix~\ref{appendix}.

\begin{proposition}\label{pro:cont}
Let $\eta \in \cs_t$.   Then for any $b\in \br$,
\begin{equation}\label{eq:1}
\n{\eta (t) u}{Z^{b}} \lesssim_{\eta,b} \n{u}{Z^{b}}.
\end{equation}

Also, for same $\eta$, given $T>0$ and $-\f{1}{2} < b' \leq b< \f{1}{2}$, we have
\begin{equation}\label{eq:2}
\n{\eta(t/T) u}{Z^{b'}} \lesssim_{\eta,b,b'} T^{b-b'} \n{u}{Z^{b}}.
\end{equation}
\end{proposition}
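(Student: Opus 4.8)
The plan is to reduce both inequalities to one–dimensional, per–frequency estimates in the time variable, exploiting that multiplication by $\eta(t)$ or $\eta(t/T)$ is diagonal in the spatial frequency $k$. The structural observation that makes the new weight tractable is the algebraic identity, valid for $b\in(-\f{1}{2},\f{1}{2})$,
\[
\lan{k}^{b\be}\lan{\f{\tau-L_k}{\lan{k}^\be}}^{b}=\pr{\lan{k}^{2\be}+(\tau-L_k)^2}^{\f{b}{2}}=:\La_k(\tau)^{b},
\]
so that $\n{u}{Z^{b}}=\n{\La_k(\tau)^{b}\wh{u}_k(\tau)}{L^2_\tau l^2_k}$ with $\La_k(\tau)\ge\lan{k}^\be\ge1$; the weight is thus a Sobolev weight carrying a frequency–dependent floor $\rho:=\lan{k}^\be$. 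Since $(\eta u)_k(t)=\eta(t)u_k(t)$, the operators in question preserve the $l^2_k$ structure, so it suffices to prove each estimate for a single $k$ with a constant uniform in $k\in\bz^*$ and then sum in $l^2_k$. Fixing $k$, I would modulate by $e^{itL_k}$ to center the weight at $\tau=0$ (this commutes with the real cutoffs), reducing everything to weighted $L^2_\tau$ estimates with weight $(\rho^2+\tau^2)^{b/2}$.

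For \eqref{eq:1} the $k$–weight is a $\tau$–independent constant that cancels, and the claim becomes that convolution by $\wh{\eta}$ is bounded on $L^2(\lan{\tau/\rho}^{2b}\,d\tau)$ uniformly in $\rho\ge1$ and for every real $b$. Writing out the kernel and applying Peetre's inequality $\lan{x}^{b}\lan{y}^{-b}\lesssim_{b}\lan{x-y}^{|b|}$ together with $\rho\ge1$, the operator is dominated by convolution against $\lan{\tau-\si}^{|b|}\abs{\wh{\eta}(\tau-\si)}$, whose $L^1$ norm is finite because $\wh{\eta}$ is Schwartz. Young's inequality then yields \eqref{eq:1} after summation in $l^2_k$.

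For \eqref{eq:2} the genuine content is the gain $T^{b-b'}$. After the reductions above I would rescale $\tau=\rho\mu$, $t=s/\rho$; a short computation shows the per–frequency estimate is equivalent to the scalar, floor–free statement $\n{\eta(\cdot/T')\Phi}{H^{b'}_t}\lesssim (T')^{b-b'}\n{\Phi}{H^{b}_t}$ with effective time $T'=\rho T=\lan{k}^\be T$. This scalar estimate I would prove for every $T'>0$. When $T'\ge1$ it follows from $H^{b}\hookrightarrow H^{b'}$ and the same–index boundedness (which for $T'\ge1$ is exactly the Peetre–Young argument above), since then $(T')^{b-b'}\ge1$. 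When $T'<1$ and $0\le b'\le b<\f{1}{2}$, I would interpolate, in the target regularity, between the two endpoints
\[
\n{\eta(t/T')\,\Phi}{H^{b}_t}\lesssim\n{\Phi}{H^{b}_t},\qquad \n{\eta(t/T')\,\Phi}{L^2_t}\lesssim (T')^{b}\n{\Phi}{H^{b}_t},
\]
both uniform in $T'<1$: the first reflects that $\lan{\tau}^{2b}$ is an $A_2$ weight for $b<\f{1}{2}$, while the second is the local Sobolev crux, following from $H^{b}(\br)\hookrightarrow L^{2/(1-2b)}$ and Hölder's inequality on $\abs{t}\le2T'$. Complex interpolation of the targets $[H^{b},L^2]_\theta=H^{b(1-\theta)}$ produces precisely $(T')^{b\theta}=(T')^{b-b'}$. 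The full range $-\f{1}{2}<b'\le b<\f{1}{2}$ then follows from the nonnegative case by duality, using $(Z^{b})^{*}=Z^{-b}$ (valid since $L_k=k|k|^\al$ is odd) to pass from $(b',b)$ to $(-b,-b')$, and for mixed signs by composing through $L^2$ via $\eta(t/T)=\eta(t/T)\,\wt{\eta}(t/T)$ with a fattened cutoff $\wt{\eta}$.

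I expect the main obstacle to be the frequency–dependent floor $\rho=\lan{k}^\be$ in the weight, which is what forbids a direct appeal to the standard $X^{s,b}$ time–localization lemma. Its resolution is the weight identity above, after which the floor only ever helps: it merely lengthens the effective time to $T'=\lan{k}^\be T$, so that high spatial frequencies ($\lan{k}^\be\ge1/T$) fall in the favorable regime $T'\ge1$ where the gain is automatic, while the remaining low frequencies collapse to the classical one–variable estimates whose sharp endpoint $b<\f{1}{2}$ is exactly the $A_2$ condition recorded in the appendix.
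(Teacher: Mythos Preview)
Your proof is correct and shares the paper's overall skeleton: Peetre's inequality plus Young for \eqref{eq:1}; for \eqref{eq:2}, reduction to $0\le b'\le b<\f12$ by duality and the factorization $\eta(t/T)=\eta(t/T)\,\wt\eta(t/T)$, followed by interpolation between the same-index bound (obtained from the $A_2$ property of $\lan{\tau}^{2b}$ and the maximal function) and the $Z^b\to Z^0$ endpoint. The two places where you diverge from the paper are both streamlinings. First, your rescaling $T'=\lan{k}^\be T$ reduces each spatial frequency to the classical one-variable $H^b$ time-localization statement, making the role of the floor $\lan{k}^\be$ explicit; the paper instead works directly with the $Z^b$ weight and reaches the same conclusion by invoking the dilation/translation invariance of $[\cdot]_{A_2}$. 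Second, for the $H^b\to L^2$ endpoint you use the Sobolev embedding $H^b(\br)\hookrightarrow L^{2/(1-2b)}(\br)$ together with H\"older on $\{|t|\le 2T'\}$, which is shorter than the paper's argument: there one splits $v=v^1+v^2$ according to whether $\lan{k}^\be\lan{(\tau-L_k)/\lan{k}^\be}\ge 1/T$, handles $v^1$ directly from the weight, and treats $v^2$ via $\n{\eta(\cdot/T)}{L^2_t}\n{v^2}{L^\infty_tL^2_x}$ plus a Cauchy--Schwarz bound on the restricted Fourier support. Your route is more elementary; the paper's has the minor virtue of staying entirely on the frequency side. In both arguments the sharp threshold $b<\f12$ enters exactly through the $A_2$ condition, as you observe.
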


\begin{proof}
First, consider \eqref{eq:1}. 
\[
\n{\eta(t) u}{Z^{b}} = \n{\lan{k}^{\sgn (b) \be \min(\f{1}{2},|b|)} \lan{\f{\tau - L_k}{\lan{k}^\be}}^b \int_\br \wh{\eta}(\tau-\si) \wh{u}_k(\si)\, d\si}{L^2_\tau l^2_k}.
\]
Using the algebraic identity $\lan{\al+\be}^b \lesssim \lan{\al}^{|b|} \lan{\be}^b$, we have
\[
\lan{\f{\tau - L_k}{\lan{k}^\be}}^b \lesssim \lan{\f{\tau-\si}{\lan{k}^\be}}^{|b|}  \lan{\f{\si - L_k}{\lan{k}^\be}}^b \lesssim_c \lan{\tau-\si}^{|b|}  \lan{\f{\si - L_k}{\lan{k}^\be}}^b.
\]
Now $\n{\eta(t) u}{Z^{b}}$ is bounded by
\[
\n{ \int_\br \lan{\tau-\si}^{|b|} \abs{\wh{\eta}}(\tau-\si) \pr{\lan{k}^{\sgn (b) \be \min(\f{1}{2},|b|)} \lan{\f{\si - L_k}{\lan{k}^\be}}^b \abs{\wh{u}_k} (\si)}\, d\si}{L^2_\tau l^2_k}.
\]
Using Young's inequality, above is bounded $\n{\lan{\cdot}^{|b|}\wh{\eta}}{L^1_\tau} \n{u}{Z^{b}}$.  This proves \eqref{eq:1}.

Next, consider \eqref{eq:2}.  Say that $0\leq b' \leq b <\f{1}{2}$,   then the negative range will follow from duality. This can be seen as follows.  Say that $-\f{1}{2} <b' < b<0$, then
\begin{align*}
\n{\vp(t/T) u}{Z^{b'}} &= \sup_{\n{v}{Z^{-{b'}}}=1} \abs{\int_\br \vp(t/T) u v\, dt\, dx}\\
   &\lesssim \sup_{\n{v}{Z^{-{b'}}}=1} \n{u}{Z^{b}} \n{\vp(t/T) v}{Z^{-b}}\\
   &\lesssim_{b,b',\vp} \sup_{\n{v}{Z^{-{b'}}}=1} \n{u}{Z^{b}} T^{-b'+b} \n{ v}{Z^{-b'}} = T^{b-b'} \n{u}{Z^{b}}.
\end{align*}

Also, if $-\f{1}{2}< b' < 0 < b<\f{1}{2}$, use $\vp(t/T) = \vp(t/T) \vp(t/2T)$ to write
\[
\n{\vp(t/T) u}{Z^{b'}}= \n{\vp(t/T)\vp(t/2T) u}{Z^{0,b'}}  \lesssim T^{-b'} \n{\vp(t/2T) u}{Z^{0}} \lesssim T^{b-b'} \n{u}{Z^{b}}.
\]

By above arguments, it suffices to assume $0\leq b' \leq b <\f{1}{2}$.   Given $b<\f{1}{2}$, we will show \eqref{eq:2} by interpolating the following two inequalities:
\begin{align}
\n{\eta(t/T) v}{Z^{b}} &\lesssim \n{v}{Z^{b}}, \label{eq:bb}\\
\n{\eta(t/T) v}{Z^{0}} &\lesssim T^{b} \n{v}{Z^{b}}. \label{eq:0b}
\end{align}

First, \eqref{eq:bb} is proved using $A_p$ weights from  Appendix~\ref{appendix}.  By definition of the maximal function defined in Appendix~\ref{appendix}, note
\[
\n{\lan{k}^{\be b} \lan{\f{\tau-L_k}{\lan{k}^\be}}^b \pr{T\wh{\eta}(T\cdot) * \wh{u_k}}(\tau)}{l^2_k L^2_\tau}\leq 
\n{\lan{k}^{\be b} \lan{\f{\tau-L_k}{\lan{k}^\be}}^b \pr{M\wh{u_k}}(\tau)}{l^2_k L^2_\tau}.
\]
Writing out this expression, note that we need to bound $M$ in $L^2(W)$ where $W= W(\tau) = \ds \lan{\f{\tau-L_k}{\lan{k}^\be}}^{2b}$.  By Lemma~\ref{le:graf},we have 
\[
\left[\lan{\f{\tau-L_k}{\lan{k}^\be}}^{2b} \right]_{A_2} = \left[\lan{\tau}^{2b}\right]_{A_2},
\]
where Claim~\ref{cl:ap} gives that this expression is finite when $b\in (-\f{1}{2}, \f{1}{2})$.  Finally, Lemma~\ref{le:stein} gives 
\[
\n{\lan{k}^{\be b} \lan{\f{\tau-L_k}{\lan{k}^\be}}^b \pr{M\wh{u_k}}(\tau)}{l^2_k L^2_\tau}\lesssim 
\n{\lan{k}^{\be b} \lan{\f{\tau-L_k}{\lan{k}^\be}}^b \abs{\wh{u_k}}(\tau)}{l^2_k L^2_\tau},
\]
which leads to \eqref{eq:bb}.

To prove \eqref{eq:0b}, we follow as decomposition of Fourier support similar to the proof given in \cite[Lemma 2.11]{MR2233925}.  Let $v = v^1 + v^2$ so that the Fourier transform of $v^1$ is supported in the region $\ds \lan{k}^\be \lan{\f{\tau-L_k}{\lan{k}^{\be}}} \geq \f{1}{T}$.  Then the estimate \eqref{eq:0b} for $v^1$ directly follows.  To obtain \eqref{eq:0b} for $v^2$, note
\[
\n{\eta(t/T) v^2}{Z^{0}} = \n{\eta(t/T)}{L^2_t} \n{v^2}{L^{\infty}_t L^2_x}  \lesssim T^{\f{1}{2}} \n{\wh{v^2}_k(\tau)}{l^2_k l^1_\tau}.
\]

We now estimate  $\n{\wh{v^2}_k(\tau)}{l^2_k l^1_\tau}$.   Denote $\mathcal{A}$ to be the support of $\wh{v^2}$, we have 
\[
 \n{\wh{v^2}}{l^2_k L^1_\tau} \lesssim \sup_k\pr{ \int_{\mathcal{A}} \lan{k}^{-\be} \lan{\f{\tau-L_k}{\lan{k}^{\be}}}^{-2b}\, d\tau }^{\f{1}{2}} \n{\wh{v}}{Z^{b}}.
\]

Note $\ds \lan{k}^\be \lan{\f{\tau-L_k}{\lan{k}^\be}} \geq |\tau - L_k|$, so we have
\[
\mathcal{A} = \left\{\tau: \lan{k}^\be \lan{\f{\tau-L_k}{\lan{k}^{\be}}} \leq \f{1}{T}\right\} \subset \left\{\tau: |\tau-L_k| \leq \f{1}{T}\right\}.
\]
Thus, 
\[
 \int_{\mathcal{A}} \lan{k}^{-\be} \lan{\f{\tau-L_k}{\lan{k}^{\be}}}^{-2b}\, d\tau \leq \int_{\tau: |\tau- L_k|<\f{1}{T}} \f{1}{|\tau-L_k|^{2b}}\, d\tau = 2 \pr{\f{1}{T}}^{1-2b}  = 2 T^{2b-1}
\]
as long as $1-2b>0 \iff b<\f{1}{2}$.  Plugging this into the previous computations, we have
\[
\n{\eta(t/T) v^2}{Z^{0}}\lesssim  T^{\f{1}{2} + \f{1}{2} (2b-1)}  \n{v}{Z^{b}},
\]
which leads to \eqref{eq:0b}.  This completes the proof.
\end{proof}

The next proposition is the main tool used to establish smoothing of the nonlinear term.  Here, we can see an effect of dissipative smoothing that takes place in $Z^b$, which is unique to our case.

\begin{proposition}\label{pro:duhamel}
Let $f$ be smooth and rapidly decaying.  For any $b\in \pr{\f{1}{2},\f{3}{2}}$,
\[
\n{ \int_0^t S(t-s)f(s)\, ds }{Z^{b}} \lesssim \n{D^{ -\be(b-\f{1}{2})}f}{Z^{b-1}}.
\]
\end{proposition}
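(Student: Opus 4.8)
The plan is to compute the space-time Fourier transform of $w(t) := \int_0^t S(t-s)f(s)\,ds$ explicitly and reduce the estimate to a weighted bound on time-truncated data. Writing the kernel of $S(t-s)$ as $e^{-iL_k(t-s)}e^{-c_k|t-s|}$, I would split the defining integral according to the sign of $t$ (this split is forced by the absolute value $|t-s|$ inserted to prevent backward parabolicity, so that for $t>0$ the exponent is $-(iL_k+c_k)(t-s)$ and for $t<0$ it is $-(iL_k-c_k)(t-s)$). Carrying out the $s$- and $t$-integrations by Fubini — legitimate since $f$ is smooth and rapidly decaying and $c_k>0$ forces absolute convergence — yields
\[
\wh{w}_k(\tau) = \f{\wh{f}^{+}_k(\tau)}{c_k + i(\tau - L_k)} + \f{\wh{f}^{-}_k(\tau)}{-c_k + i(\tau - L_k)},
\]
where $\wh{f}^{\pm}_k$ is the time Fourier transform of the truncation $f_k\,\mathbf{1}_{\pm t>0}$, and the denominators are written in the sign convention of the $Z^b$ weight. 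The structural point, unique to the dissipative setting, is that honest integration produces the one-sided truncations $\wh{f}^{\pm}_k$ rather than $\wh{f}_k$ itself.

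Next I would extract the smoothing gain from the denominators. By Claim~\ref{cl:ck} we have $c_k\sim\lan{k}^\be$, so both denominators have modulus
\[
\abs{\pm c_k + i(\tau - L_k)} = \pr{c_k^2 + (\tau-L_k)^2}^{1/2} \sim \lan{k}^\be \lan{\f{\tau - L_k}{\lan{k}^{\be}}}.
\]
Inserting this into the $Z^b$ norm (the branch $b>\f{1}{2}$, with weight $\lan{k}^{\be/2}$), the weight acting on $\wh{f}^{\pm}_k$ collapses to $\lan{k}^{\be/2}\lan{\tfrac{\tau-L_k}{\lan{k}^\be}}^{b}\cdot\lan{k}^{-\be}\lan{\tfrac{\tau-L_k}{\lan{k}^\be}}^{-1} = \lan{k}^{-\be/2}\lan{\tfrac{\tau-L_k}{\lan{k}^\be}}^{b-1}$. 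This is exactly the multiplier in the right-hand side $\n{D^{-\be(b-\f{1}{2})}f}{Z^{b-1}}$: indeed $b-1\in(-\f{1}{2},\f{1}{2})$, so $Z^{b-1}$ uses the first branch with weight $\lan{k}^{(b-1)\be}$, and (using $|k|\sim\lan{k}$ on $\bz^*$) the net $\lan{k}$-power is $(b-1)\be - \be(b-\f{1}{2}) = -\f{\be}{2}$. Thus the estimate reduces to
\[
\n{\lan{k}^{-\be/2}\lan{\tfrac{\tau-L_k}{\lan{k}^\be}}^{b-1}\wh{f}^{\pm}_k}{L^2_\tau l^2_k} \lesssim \n{\lan{k}^{-\be/2}\lan{\tfrac{\tau-L_k}{\lan{k}^\be}}^{b-1}\wh{f}_k}{L^2_\tau l^2_k},
\]
i.e. to replacing the truncated transforms by $\wh{f}_k$ in a weighted $L^2_\tau$ norm.

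The main obstacle is precisely this replacement, and it is where the hypothesis $b\in(\f{1}{2},\f{3}{2})$ is consumed. Truncation in time is the Heaviside projection in frequency, so $\wh{f}^{\pm}_k = \f{1}{2}\pr{\wh{f}_k \mp i H\wh{f}_k}$ with $H$ the Hilbert transform in $\tau$; it therefore suffices to bound $H$ on $L^2(W_k)$, uniformly in $k$, with weight $W_k(\tau) = \lan{\tfrac{\tau-L_k}{\lan{k}^\be}}^{2(b-1)}$. By the translation- and dilation-invariance of the $A_2$ characteristic (Lemma~\ref{le:graf}) one has $[W_k]_{A_2} = [\lan{\tau}^{2(b-1)}]_{A_2}$, and Claim~\ref{cl:ap} shows this is finite exactly when $2(b-1)\in(-1,1)$, that is $b\in(\f{1}{2},\f{3}{2})$. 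The weighted $L^2$ boundedness of the Hilbert transform for $A_2$ weights (the Hunt--Muckenhoupt--Wheeden theorem, the singular-integral analogue of Lemma~\ref{le:stein}; see Appendix~\ref{appendix}) then yields the bound with constant independent of $k$, and summing in $k$ closes the argument. I note that the upper cutoff $b<\f{3}{2}$ here is the same one appearing in Proposition~\ref{pro:free}, and has the same source: the loss of the $A_2$ condition (equivalently, of square-integrability of the dissipative kernel) once $b\ge\f{3}{2}$.
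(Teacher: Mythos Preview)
Your proposal is correct and follows essentially the same route as the paper. The paper organizes the computation via the identity $\chi_{(0,t)}(s)=\tfrac12(\sgn s+\sgn(t-s))$, obtaining the two convolution pieces $\varphi_k(\tau-L_k)\,\wh{\ch_\tau f_k}$ and $\varphi_k^a(\tau-L_k)\,\wh{f_k}$ rather than your resolvent form $\frac{\wh{f}^\pm_k}{\pm c_k+i(\tau-L_k)}$, but the two expressions are algebraically identical once one expands $\wh{f}^\pm=\tfrac12(\wh{f}\mp i\ch_\tau\wh{f})$, and both reduce to exactly the same weighted Hilbert-transform bound on $L^2\bigl(\lan{\tfrac{\tau-L_k}{\lan{k}^\be}}^{2(b-1)}\bigr)$ handled via Lemma~\ref{le:hilbert}, Lemma~\ref{le:graf}, and Claim~\ref{cl:ap}.
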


\textit{Remark:} If we consider $b\approx \f{1}{2}$, then the norm on the RHS above is approximately $\n{f}{Z^{-\f{1}{2}}}$.  By definition of this norm, it carries $\n{f}{Z^{-\f{1}{2}}} \lesssim \n{f}{L^2_t H^{-\f{\be}{2}}_x}$.  Thus, this proposition shows a smoothing of order $\be/2$.  We remark that this is analogous to the $\f{1}{2}$~derivative gain \cite[Proposition 2.16]{MR3335395} achieved for the proof of stabilization of Benjamin-Ono equation using the operator $GD^1 G$.  However, the method used in \cite{MR3335395} cannot take advantage of dispersive estimates which we will establish for our case.  Thus, we are able to use a smaller dissipation $\be<1$ and still acquire enough smoothing to overcome the full nonlinear derivative $\p_x (v^2)$ in \eqref{eq:dgbo}.

\begin{proof}
First, note the identity $\chi_{(0,t)}(s) =\f{1}{2} ( \sgn (s) + \sgn (t-s))$ for any $t>0$ and $s \in \br$.  Then, we can write the integral on the LHS of the statement as
\begin{equation}\label{eq:duham}
 \int_\br e^{-i(t-s)L_k -c_k|t-s|} \sgn(s) f_k(s)\, ds +
 \int_\br \sgn(t-s) e^{-i(t-s)L_k -c_k|t-s|}  f_k(s)\, ds
\end{equation}
where we have omitted the factor of $\f{1}{2}$.  Note that both integrals are convolutions.  We establish the following claim.

\begin{claim}\label{cl:vp} The following hold, 
\begin{align}
\vp_k(\tau) &:= \cf_t[ e^{-c_k |t|}](\tau) = \f{1}{c_k} \lan{\f{\tau}{c_k}}^{-2} \sim \lan{k}^{-\be} \lan{\f{\tau}{\lan{k}^\be}}^{-2}, \label{eq:vp1}\\
\vp_k^a (\tau) &:= \cf_t [\sgn[t] e^{-c_k |t|}](\tau) =\f{1}{c_k} \f{\tau}{c_k} \lan{\f{\tau}{c_k}}^{-2} \lesssim \lan{k}^{-\be}\lan{\f{\tau}{\lan{k}^\be}}^{-1}. \label{eq:vp2}
\end{align}
\end{claim}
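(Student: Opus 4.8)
The plan is to prove both identities by evaluating the one-dimensional Fourier integrals in closed form and then transferring the weights from $c_k$ to $\lan{k}^\be$ via Claim~\ref{cl:ck}. For \eqref{eq:vp1} the first equality simply reproduces the computation leading to \eqref{eq:vp}: splitting $\int_\br = \int_0^\infty + \int_{-\infty}^0$ and using $c_k>0$ gives $\vp_k(\tau) = \f{1}{c_k+i\tau} + \f{1}{c_k-i\tau} = \f{2c_k}{c_k^2+\tau^2}$, which I rewrite as $\f{1}{c_k}\lan{\f{\tau}{c_k}}^{-2}$ by factoring $c_k^2$ out of the denominator.

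For \eqref{eq:vp2} I would run the same splitting, the only change being that the factor $\sgn(t)$ reverses the sign of the contribution from $(-\infty,0)$. This produces the difference $\f{1}{c_k+i\tau} - \f{1}{c_k-i\tau} = \f{-2i\tau}{c_k^2+\tau^2}$, whose magnitude is $\f{2|\tau|}{c_k^2+\tau^2}$; factoring $c_k^2$ out of numerator and denominator recasts this (up to the harmless constant and phase) as $\f{1}{c_k}\f{\tau}{c_k}\lan{\f{\tau}{c_k}}^{-2}$, the stated closed form.

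The equivalence/estimate half of each line is where Claim~\ref{cl:ck} enters. Since $c_k \sim_{g,\be}\lan{k}^\be$, I get $\f{1}{c_k}\sim\lan{k}^{-\be}$ at once. To replace $c_k$ by $\lan{k}^\be$ inside the bracket I invoke the elementary scaling fact that $\lan{\mu x}\sim_\mu\lan{x}$ whenever $\mu$ is bounded above and below by positive constants: writing $\f{\tau}{c_k} = \mu\,\f{\tau}{\lan{k}^\be}$ with $\mu := \lan{k}^\be/c_k \sim_{g,\be}1$ yields $\lan{\f{\tau}{c_k}}\sim_{g,\be}\lan{\f{\tau}{\lan{k}^\be}}$ uniformly in $k$ and $\tau$. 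Combining these gives \eqref{eq:vp1} as a genuine two-sided $\sim$. For \eqref{eq:vp2} only an upper bound is needed, so after the same substitution I downgrade the exponent using $\f{|x|}{1+x^2} = |x|\lan{x}^{-2}\le\lan{x}^{-1}$ (equivalently $|x|\le\lan{x}$), which turns $\f{1}{c_k}\f{|\tau|}{c_k}\lan{\f{\tau}{c_k}}^{-2}$ into a bound by $\f{1}{c_k}\lan{\f{\tau}{c_k}}^{-1}\sim\lan{k}^{-\be}\lan{\f{\tau}{\lan{k}^\be}}^{-1}$, which is exactly \eqref{eq:vp2}.

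There is no serious obstacle; the computation is routine. The one point I would state explicitly—and the only place requiring care—is the uniform-in-$(k,\tau)$ transfer of the weight from $c_k$ to $\lan{k}^\be$ inside the bracket, since $\lan{\cdot}$ is not homogeneous; this is precisely why the comparison $\lan{\mu x}\sim_\mu\lan{x}$ for $\mu\sim1$ is used, with implicit constants depending on $g$ and $\be$ through Claim~\ref{cl:ck} but not on $k$ or $\tau$. I would also flag that the constant $2$ and phase $-i$ in the exact transforms are suppressed in the stated identities; this is harmless, as only the magnitudes of $\vp_k$ and $\vp_k^a$ enter the $Z^b$ estimates in the remainder of the proof of Proposition~\ref{pro:duhamel}.
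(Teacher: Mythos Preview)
Your proposal is correct and follows essentially the same approach as the paper: the paper computes $\vp_k$ and $\vp_k^a$ via the even/odd (cosine/sine) decomposition $\vp_k(\tau)=2\int_0^\infty e^{-c_k t}\cos(\tau t)\,dt$ and $\vp_k^a(\tau)=2\int_0^\infty e^{-c_k t}\sin(\tau t)\,dt$, which is equivalent to your splitting $\int_\br=\int_0^\infty+\int_{-\infty}^0$, and then invokes Claim~\ref{cl:ck} exactly as you do. Your explicit remark on the uniform transfer $\lan{\mu x}\sim_\mu\lan{x}$ for $\mu\sim 1$ and on the suppressed constant/phase is more detailed than the paper's terse ``Using Claim~\ref{cl:ck}'', but the substance is identical.
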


\begin{proof}
First, recall that \eqref{eq:vp1} was computed \eqref{eq:vp}.  To compute \eqref{eq:vp2}, note
\[
\vp_k^a (\tau)  = 2 \int_0^\infty e^{-c_k t} \sin (\tau t)\, dt = \tn{Im}\left[\f{1}{c_k - i \tau}\right] = \f{\tau}{c_k^2 + \tau^2} = \f{1}{c_k} \f{\f{\tau}{c_k}}{1+ \pr{\f{\tau}{c_k}}^2}.
\]
Using Claim~\ref{cl:ck}, this leads to \eqref{eq:vp2}.
\end{proof}

Using the same notations as in Claim~\ref{cl:vp}, the Fourier transform in time of \eqref{eq:duham} can be written as
\[
-i\vp_k(\tau - L_k) \wh{\ch_\tau f_k}(\tau) + \vp_k^a(\tau-L_k) \wh{f_k}(\tau),
\]
where $\ch_\tau$ is the Hilbert transform in $\tau$.  Using Claim~\ref{cl:vp}, the $Z^b$ norm of the Duhamel term is bounded by
\begin{equation}\label{eq:duham2}
\n{\lan{k}^{-\f{\be}{2}} \lan{\f{\tau-L_k}{\lan{k}^{\be}}}^{b-1}  \wh{\ch_\tau f_k} }{L^2_\tau l^2_k} + \n{\lan{k}^{-\f{\be}{2}} \lan{\f{\tau-L_k}{\lan{k}^{\be}}}^{b-1}  \wh{f}_k }{L^2_\tau l^2_k}
\end{equation}
where we have given up one power of $\lan{\f{\tau-L_k}{\lan{k}^{\be}}}$ in the first norm.  The second term in \eqref{eq:duham2} is more than sufficiently bounded, so we need to prove the bound for the first term.  Here again, we use properties of $A_p$~weights.

  We need to show that $\ch_\tau$ is bounded in $L^2(W)$ where $W = \ds \lan{\f{\tau-L_k}{\lan{k}^{\be}}}^{2b-2}$.  As before, we use properties given in Lemma~\ref{le:graf} and Claim~\ref{cl:ap} to conclude that $W \in A_2$ if $2b-2 \in (-1,1) \iff b \in (\f{1}{2}, \f{3}{2})$.  Finally, we use Lemma~\ref{le:hilbert} to conclude that $\ch_\tau$ is bounded in $L^2(W)$ uniformly in $k$.

Then $\eqref{eq:duham2} \lesssim \ds \n{\lan{k}^{-\f{\be}{2}} \lan{\f{\tau-L_k}{\lan{k}^{\be}}}^{b-1} \wh{f_k}}{L^2_{\tau,k}}$.  Noting $-\f{\be}{2} = \be(b-1) - \be (b-\f{1}{2})$,  we obtain the claim.
\end{proof}

%%%%%%%%%%%%%%%%%%%
%%% Inhomogeneous
%%%%%%%%%%%%%%%%%%%
\section{Estimate of linear terms}\label{sec:lin}
In this section, we establish that $\cn_1$ and $\crr$ from \eqref{eq:duhamel} are bounded.  Estimating $\crr$ is very simple, but $\cn_1$ will be dealt with more carefully.  We begin with estimate of $\cn_1$.

\begin{lemma}\label{le:gdg}
Let $\al>\be$ be fixed.  Given $0<T\ll 1$ and $b\in \left(\f{1}{2}, \f{\al}{\al+\be}\right)$, there exists $\ve>0$ such that
\[
\n{D^{-\be(b-\f{1}{2})} \cn_1[v]}{Z^{b-1}_T} \lesssim_{\ve,b} T^{\ve} \n{v}{Z^b_T}.
\]
\end{lemma}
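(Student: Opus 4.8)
The plan is to peel off the factor $T^\ve$ using Proposition~\ref{pro:cont} and reduce the claim to a clean, time-global estimate at a Bourgain index slightly above the free level $b-1$. Write $\cn_1[v]_k = \sum_{n\neq k} a_{k,n}\, v_n$ with $a_{k,n} := \sum_m |m|^\be\, g_{k-m}\, g_{m-n}$, and begin with a pointwise bound on these coefficients. Since $g\in C^\infty(\bt)$ its Fourier coefficients are rapidly decaying, so $g_{k-m} g_{m-n}$ confines $m$ to within $O(1)$ of both $k$ and $n$; estimating $|m|^\be \lesssim_\be \lan{k}^\be + \lan{k-m}^\be$ and summing the resulting convolution of Schwartz tails gives, for every $N$,
\[
|a_{k,n}| \lesssim_N \lan{k}^\be\, \lan{k-n}^{-N}.
\]
In particular the interaction concentrates on $|k-n| = O(1)$, where $\lan{k}\sim\lan{n}$; off the diagonal one may trade $\lan{k}^\be$ for $\lan{n}^\be$ at the cost of powers of $\lan{k-n}$ that are absorbed into the rapidly decaying factor.

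For the reduction I would fix an extension $\wt{v}$ of $v$ with $\n{\wt v}{Z^b}\lesssim\n{v}{Z^b_T}$ and observe that $D^{-\be(b-\f12)}\cn_1$ acts only in $x$, hence commutes with the time cut-off $\eta(t/T)$. As $\eta(t/T)\equiv 1$ on $[0,T]$, it suffices to bound $\n{\eta(t/T)\, D^{-\be(b-\f12)}\cn_1[\wt v]}{Z^{b-1}}$. Since $b\in(\f12,\f{\al}{\al+\be})$ forces $b-1\in(-\f12,0)$, Proposition~\ref{pro:cont}~\eqref{eq:2} applies: for any intermediate index $p$ with $b-1 < p < \f12$,
\[
\n{\eta(t/T)\, D^{-\be(b-\f12)}\cn_1[\wt v]}{Z^{b-1}} \lesssim T^{\,p-(b-1)}\,\n{D^{-\be(b-\f12)}\cn_1[\wt v]}{Z^{p}}.
\]
This produces $\ve = p-(b-1) > 0$ and reduces everything to the time-global claim $\n{D^{-\be(b-\f12)}\cn_1[w]}{Z^p}\lesssim\n{w}{Z^b}$ for a suitable $p$ just above $b-1$.

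The last estimate I would prove by a Schur test that is diagonal in $\tau$. Unwinding the two norms (and using $|k|\sim\lan{k}$ on $\bz^*$ together with the coefficient bound), it is equivalent to boundedness on $L^2_\tau l^2$ of the $\tau$-diagonal operator with kernel
\[
K_{k,n}(\tau) = \lan{k}^{\be(p-b+1)}\,\lan{k-n}^{-N}\,\lan{\f{\tau-L_k}{\lan{k}^\be}}^{p}\,\lan{\f{\tau-L_n}{\lan{n}^\be}}^{-b},
\]
so I need $\sup_{k,\tau}\sum_n|K_{k,n}(\tau)|\lesssim 1$ and the transposed bound. The decisive feature is dispersive separation: for $n\neq k$ with $|k-n|=O(1)$ one has $|L_k-L_n|\sim|k|^\al|k-n|$, while the two modulation weights have width only $\sim\lan{k}^\be$. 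Because $\al>\be$, a frequency $\tau$ sitting within $O(\lan{k}^\be)$ of one center lies $\sim|k|^{\al-\be}|k-n|$ widths from the other, so the far weight contributes a negative power of $|k|$. Splitting into the regimes $\tau$ near $L_k$ and $\tau$ near some distant $L_{n_0}$ and balancing this gain against the growth $\lan{k}^{\be(p-b+1)}$ shows the row and column sums are $\lesssim 1$ as soon as $p\le\f{\be(b-1)}{\al}$ (a second condition $p\le\f{\al b}{\be}-1$ also appears but is weaker). Since $\f{\be(b-1)}{\al} > b-1$ precisely when $\al>\be$, such a $p$ exists in the window $(b-1,\f12)$, completing the argument.

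The main obstacle is exactly this final balancing. Because $\cn_1$ carries the derivative loss $\lan{k}^\be$ and we insist on an index $p$ strictly above the free level $b-1$, the spatial part of $K_{k,n}$ genuinely grows in $k$; were the phase ignored (i.e.\ $L_k=L_n$) the two modulation weights would share a center and the row sum would force $p\le b-1$, killing the gain. The estimate can only close by quantitatively exploiting that $L_k=k|k|^\al$ drives the output modulation far from the input modulation whenever $k\neq n$, and keeping track of the two $\tau$-regimes is where the care lies. This is the sole place the hypothesis $\al>\be$ is used; the stated window for $b$ (in particular $b<1$) merely keeps the indices $b-1$ and $p$ inside $(-\f12,\f12)$, where Proposition~\ref{pro:cont} and the attendant $A_2$ machinery are available.
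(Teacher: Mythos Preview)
Your proof is correct and follows essentially the same route as the paper's: extend, peel off $T^\ve$ via Proposition~\ref{pro:cont}, localize to $|k-n|=O(1)$ using the rapid decay of $\wh g$, and then exploit the dispersive separation $\max\bigl(\lan{(\tau-L_k)/\lan{k}^\be},\,\lan{(\tau-L_n)/\lan{n}^\be}\bigr)\gtrsim \lan{k}^{\al-\be}$ for $n\neq k$ (this is exactly the paper's Claim~\ref{cl:gdg}) to absorb the $\lan{k}^{\be(p-b+1)}$ growth. The only cosmetic difference is that you pre-sum over $m$ into the coefficients $a_{k,n}$ and phrase the endgame as a Schur test, whereas the paper keeps the double sum and closes with Young's inequality; your threshold $p\le \be(b-1)/\al$ is the same condition that makes the paper's multiplier $\mathcal M\lesssim 1$.
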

\begin{proof}

Let $u\in Z^{b}$ such that $u(t) \equiv v(t)$ on $[0,T]$, and $\n{u}{Z^{b}} \leq 2 \n{v}{Z^{b}_T}$.  Then the LHS of the desired estimate is bounded by  $\n{D^{-\be(b-\f{1}{2})} \eta (t/T) \cn_1[u]}{Z^{b-1}}$.  

Denote $\ve>0$ to be a constant to be chosen later.  Note that $b-1\in (-\f{1}{2},\f{1}{2})$.  Then, by Proposition~\ref{pro:cont}, 
\[
\n{D^{-\be(b-\f{1}{2})}  \eta (t/T) \cn_1[u]}{Z^{b-1}} \lesssim_{\eta, b,\ve} T^{\ve} \n{D^{-\be(b-\f{1}{2})} \cn_1[u]}{Z^{b-1+\ve}}.
\]
The norm on the RHS above can be written as
\begin{equation}\label{eq:norm}
\n{|k|^{-\f{\be}{2}}\lan{\f{\tau-L_k}{\lan{k}^{\be}}}^{b-1+\ve}  \sum_m \sum_{n\neq k} |m|^\be g_{k-m} g_{m-n} \wh{u_n} (\tau)}{L^2_\tau l^2_k}.
\end{equation}

Denote $\ds f_n(\tau) := \lan{n}^{\f{\be}{2}} \lan{\f{\tau -L_n}{\lan{n}^\be}}^b \abs{\wh{u_n}}(\tau)$.  Then
\[
 \eqref{eq:norm}\lesssim  \n{\sum_{m, n\neq k} \abs{g_{k-m}} \abs{g_{m-n}} \left[\mathcal{M}_{n,m,k}(\tau)\right] f_n(\tau)}{L^2_\tau l^2_k}
 \]
where
\[
\mathcal{M} =   \mathcal{M}_{n,m,k}(\tau) :=  |m|^\be |k|^{-\f{\be}{2}}{\lan{n}}^{ -\f{\be}{2}+\ve}\lan{\f{\tau-L_k}{\lan{k}^{\be}}}^{b-1+\ve} \lan{\f{\tau-L_n}{\lan{n}^{\be}}}^{-b}.
\]

We focus on this term $\mathcal{M}$.  Our goal here is to deal with the $\beta$~derivative in $m$~frequency, and also to assure that it is summable in $m,n$.

First, note that if $k\not\sim m$, then we can use the decay in $|g_{k-m}|$ to produce weights $|k-m|^{-N} \lesssim \max(|k|,|m|)^{-N}$.  Same goes for the case when $n\not\sim m$ since we have $|g_{m-n}|$.  Thus, we only need to deal with the case when $k\sim m \sim n$.  In this case, 
\begin{equation}\label{eq:gdgderivative}
|m|^\be  |k|^{-\f{\be}{2}}{\lan{n}}^{ -\f{\be}{2}+\ve} \sim \lan{k}^{\ve},
\end{equation}
so we must recover $\ve$~derivatives from the remaining terms. 

\begin{claim}\label{cl:gdg}
Let $\al>0$.  For all $n\neq k$,
\[
\max\pr{\lan{\f{\tau-k|k|^\al}{\lan{k}^{\be}}}, \lan{\f{\tau-n |n|^\al}{\lan{n}^{\be}}} } \gtrsim \max(\lan{n}, \lan{k})^{\al-\be}. 
\]
\end{claim}

\begin{proof}
Without loss of generality, assume $|n|\geq |k|$,
\begin{align*}
\pr{\f{\tau-k|k|^\al}{\lan{k}^\be}} - \pr{\f{\tau-n|n|^\al}{\lan{n}^\be}} 
&= (\tau - k|k|^\al) \pr{\f{1}{\lan{k}^\be} - \f{1}{\lan{n}^\be}} + \f{n|n|^\al - k |k|^\al}{\lan{n}^\be}
\end{align*}
Since $\lan{k}^{-\be} \geq \lan{n}^{-\be}$, the first term is at most size of $\ds 2\lan{\f{\tau-k|k|^\al}{\lan{k}^\be}}$.

Next, consider the second term above.  This expression is apparently larger when $n$ and $k$ have different signs.  Also, if $|n|\gg |k|$, this expression has order $\lan{n}^{\al+1 - \be}$, which is more than sufficient for our desired estimate.  So it suffices to bound this expression from below when $n \sim k$. Thus,
\[
\abs{ \f{n|n|^\al - k|k|^\al}{\lan{n}^\be}} = \f{|n|^{\al+1} -|k|^{\al+1} }{\lan{n}^\be} = \f{(\al+1) |k^*|^{\al} \abs{n-k}}{\lan{n}^\be},
\]
where $k^* \in (k,n)$.  Since $|n-k|\geq 1$ and $n\sim k$, the RHS above has the size $\lan{n}^{\al-\be}$.  This gives our claim.
\end{proof}

Using Claim~\ref{cl:gdg}, we have
\begin{equation}\label{eq:gdgmodulation}
\lan{\f{\tau-L_k}{\lan{k}^{\be}}}^{b-1+\ve} \lan{\f{\tau-L_n}{\lan{n}^{\be}}}^{-b} \lesssim \lan{k}^{(\al-\be)(b-1+\ve)}.
\end{equation}

Collecting estimates \eqref{eq:gdgderivative} and \eqref{eq:gdgmodulation}, we have
\[
\mathcal{M} \lesssim \lan{k}^{\ve + (\al-\be)(b-1+\ve)}
\]
when $k\sim m\sim n$.  To make this exponent non-positive, we need
\[
0< \ve <\f{(\al - \be)(1-b)}{\al-\be + 1}.
\]

Now, with this condition satisfied, we apply Young's inequality to bound \eqref{eq:norm} by
\[
\hspace{-50pt}\n{\sum_m \sum_{n\neq k} |g_{k-m}^N| |g_{m-n}^N|   \mathcal{M}\, f_n (\tau)}{L^2_\tau l^2_k} \leq \n{g_{k}}{l^1_k}^2  \n{f}{L^2_{\tau,k}} \lesssim_g \n{u}{Z^{b}}\leq 2 \n{v}{Z^{b}_T}
\] 
where $g_{k}^N := \lan{k}^{-N} g_{k}$ for some $N\gg 1$.
\end{proof}

The next lemma deals with the bounded operator $\crr$.

\begin{lemma}\label{le:easy}
Let $b\in (\f{1}{2}, 1)$.  Then, for some $0<\ve\ll 1$,  
\[
\n{D^{-\be(b-\f{1}{2})}\crr[v]}{Z^{b-1}_T} \lesssim_{\ve} T^{\f{3}{2}-b -\ve}\n{v}{Z^{b}_T}.
\]
\end{lemma}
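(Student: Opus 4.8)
The plan is to exploit three structural facts: the operator $\crr$ acts only in the spatial variable and, as recorded in the equation set-up, is bounded on $L^2(\bt)$; the multiplier $D^{-\be(b-\f{1}{2})}$ is of negative order (since $b>\f{1}{2}$) and hence also bounded on $L^2(\bt)$; and the $Z^0$ norm coincides with $L^2_t L^2_x$ by Plancherel. The crux is to extract the full power $T^{\f{3}{2}-b}$. A naive estimate that simply discards the negative modulation weight $\lan{\f{\tau-L_k}{\lan{k}^{\be}}}^{b-1}$ only yields $T^{\f{1}{2}}$ (from the time support), which is \emph{insufficient} when $b$ is close to $\f{1}{2}$; the remedy is to harvest an additional factor $T^{1-b}$ from the Bourgain time-localization estimate \eqref{eq:2}, and the main work of the proof is to organize these two sources of decay cleanly.

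First I would fix an extension $u\in Z^b$ with $u\equiv v$ on $[0,T]$ and $\n{u}{Z^{b}}\leq 2\n{v}{Z^{b}_T}$, exactly as in the proof of Lemma~\ref{le:gdg}. Since $\crr$ is pointwise in $t$, one has $\crr[u]\equiv\crr[v]$ on $[0,T]$, so inserting the cutoff $\eta(t/T)$ (which equals $1$ there) gives
\[
\n{D^{-\be(b-\f{1}{2})}\crr[v]}{Z^{b-1}_T}\leq \n{\eta(t/T)\,D^{-\be(b-\f{1}{2})}\crr[u]}{Z^{b-1}}.
\]
Next I would write $\eta(t/T)=\eta(t/T)\eta(t/2T)$, which is valid because $\eta(\cdot/2T)\equiv 1$ on $\{|t|\leq 2T\}\supset \tn{supp}\,\eta(\cdot/T)$. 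Since $b\in(\f{1}{2},1)$ gives $b-1\in(-\f{1}{2},0)$, I would apply \eqref{eq:2} of Proposition~\ref{pro:cont} with target exponent $b-1$ and source exponent $0$ (the hypothesis $-\f{1}{2}<b-1\leq 0<\f{1}{2}$ holds), treating the factor $\eta(t/2T)D^{-\be(b-\f{1}{2})}\crr[u]$ as the function being cut off:
\[
\n{\eta(t/T)\,D^{-\be(b-\f{1}{2})}\crr[u]}{Z^{b-1}}\lesssim T^{1-b}\,\n{\eta(t/2T)\,D^{-\be(b-\f{1}{2})}\crr[u]}{Z^{0}}.
\]

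Then I would use $Z^0=L^2_t L^2_x$ together with Hölder in time and the $L^2(\bt)$-boundedness of $\crr$ and of $D^{-\be(b-\f{1}{2})}$:
\[
\n{\eta(t/2T)\,D^{-\be(b-\f{1}{2})}\crr[u]}{Z^{0}}\leq \n{\eta(t/2T)}{L^2_t}\,\n{D^{-\be(b-\f{1}{2})}\crr[u]}{L^\infty_t L^2_x}\lesssim T^{\f{1}{2}}\,\n{u}{C^0_t L^2_x}.
\]
Finally, the embedding \eqref{eq:embed1} (applicable since $b>\f{1}{2}$) gives $\n{u}{C^0_t L^2_x}\lesssim \n{u}{Z^{b}}\leq 2\n{v}{Z^{b}_T}$, and multiplying the two gains produces $T^{1-b}\cdot T^{\f{1}{2}}=T^{\f{3}{2}-b}$. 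Since $0<T\leq 1$, for any $\ve\in(0,\f{3}{2}-b)$ one has $T^{\f{3}{2}-b}=T^{\f{3}{2}-b-\ve}T^{\ve}\leq T^{\f{3}{2}-b-\ve}$, which yields the stated bound.

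I expect the only real obstacle to be the extraction of the sharp time power: the structural simplifications ($\crr$ spatial and $L^2$-bounded, $Z^0=L^2_{t,x}$) render the analytic content essentially trivial, so all the care goes into the double-cutoff bookkeeping that separates the $T^{1-b}$ localization gain from the $T^{1/2}$ support gain.
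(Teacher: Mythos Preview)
Your proof is correct and follows essentially the same route as the paper: choose an extension $u$, insert the cutoff $\eta(t/T)$, apply \eqref{eq:2} to pass from $Z^{b-1}$ to $Z^0=L^2_{t,x}$ (gaining $T^{1-b}$), use the $L^2$-boundedness of $\crr$, and then extract a further $T^{1/2}$ from the time localization before closing with an embedding back to $Z^b$. The only variation is in this last step: the paper applies \eqref{eq:2} a second time (from $Z^0$ to $Z^{\frac{1}{2}-\ve}$, yielding $T^{\frac{1}{2}-\ve}$) and then the embedding \eqref{eq:embed2}, whereas you use H\"older in $t$ (yielding the sharper $T^{1/2}$) and the embedding \eqref{eq:embed1} into $C^0_tL^2_x$. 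Your variant is slightly cleaner in that it avoids the $\ve$-loss entirely, so the final step of discarding $T^{\ve}$ is unnecessary---you actually prove the estimate with $\ve=0$.
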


\begin{proof}
Let $u\in Z^{b}$ such that $u(t) \equiv v(t)$ on $[0,T]$, and $\n{u}{Z^{b}} \leq 2 \n{v}{Z^{b}_T}$.  Then LHS of the desired estimate is bounded by  $\n{\eta (t/T) \crr[u]}{Z^{b-1}}$.  Then using Propostion~\ref{pro:cont}, \eqref{eq:embed2} and the fact that $\crr$ is bounded in $L^2_x$,
\begin{align*}
\n{\eta \pr{\f{t}{T}} \crr[u]}{Z^{b-1}} &\lesssim T^{1-b} \n{\eta\pr{\f{t}{2T}} \crr[u]}{L^2_t L^2_x}  \lesssim T^{1-b} \n{\eta\pr{\f{t}{2T}} u}{L^2_t L^2_x}\\
&\lesssim_\ve T^{1-b + \f{1}{2}- \ve} \n{u}{Z^{\f{1}{2}-\ve}} \lesssim T^{\f{3}{2}- b-\ve} \n{u}{Z^{b}}.
\end{align*}
  
Recalling $\n{u}{Z^{b}} \leq 2 \n{v}{Z^{b}_T}$, we complete the proof.
\end{proof}

%%%%%%%%%%%%%%%%%%%
%%% Bilinear estimate
%%%%%%%%%%%%%%%%%%%

\section{Bilinear estimate}\label{sec:bilinear}
In this section, we perform the bilinear estimate necessary for our argument. This is the key estimate that orchestrates the whole proof.  In Proposition~\ref{pro:duhamel}, we saw a dissipative smoothing effect of order $\be/2$.  In the following lemma, we observe a dispersive gain which compensates for the remaining nonlinear derivative $\p_x (v^2)$.

\begin{lemma}\label{le:nonl}
Let $\al>1$ and $2-\al <\be\leq 1$.  Given $0<T\ll 1$ and $s\geq 0$, there exists $\ve>0$ and $b>\f{1}{2}$ such that
\[
\n{D^{-\be (b-\f{1}{2})} \p_x (uv)}{Z^{b-1}_T} \lesssim_{\ve,b} T^{\ve} \n{u}{Z^{b}_T}\n{v}{Z^{b}_T}.
\]
\end{lemma}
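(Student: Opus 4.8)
The goal is a bilinear estimate for the derivative nonlinearity $\p_x(uv)$ in the dissipation-normalized space $Z^b$. By the now-familiar reduction (as in Lemma~\ref{le:gdg}), I would first pass from the time-restricted norms to global ones: pick extensions $u,v \in Z^b$ agreeing with the given functions on $[0,T]$ with comparable norm, insert a cutoff $\eta(t/T)$, and use Proposition~\ref{pro:cont} to trade the cutoff for a factor $T^\ve$ at the cost of raising the regularity index $b-1$ to $b-1+\ve$ (legitimate since $b-1+\ve \in (-\tfrac12,\tfrac12)$ for $b$ close to $\tfrac12$). After this reduction the task becomes the frequency-space estimate
\[
\n{|k|^{-\f{\be}{2}+1} \lan{\f{\tau-L_k}{\lan{k}^\be}}^{b-1+\ve} \sum_{k=m+n} \int \wh{u_m}(\tau_1)\,\wh{v_n}(\tau-\tau_1)\, d\tau_1}{L^2_\tau l^2_k} \lesssim \n{u}{Z^b}\n{v}{Z^b},
\]
where I have written $\p_x$ and $D^{-\be(b-\f12)}$ together as the symbol $|k|^{1-\f\be2}$ after accounting for $-\f\be2 = \be(b-1)-\be(b-\f12)$ in the definition of the norm.

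\textbf{Key steps.} Writing $f_m(\tau):=\lan{m}^{\f\be2}\lan{\f{\tau-L_m}{\lan m^\be}}^b|\wh{u_m}|$ and similarly $h_n$, the multiplier to control is
\[
\mathcal{M}=|k|^{1-\f\be2}\lan{m}^{-\f\be2}\lan{n}^{-\f\be2}\,\lan{\tfrac{\tau-L_k}{\lan k^\be}}^{b-1+\ve}\lan{\tfrac{\tau_1-L_m}{\lan m^\be}}^{-b}\lan{\tfrac{\tau-\tau_1-L_n}{\lan n^\be}}^{-b}.
\]
The central step is a \emph{resonance/modulation} estimate: on the frequency interaction $k=m+n$ the dispersive resonance function $\Phi=L_k-L_m-L_n=k|k|^\al-m|m|^\al-n|n|^\al$ is large, and at least one of the three modulation weights must dominate $|\Phi|$ normalized by $\lan{k}^\be$. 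I would prove a claim, in the spirit of Claim~\ref{cl:gdg}, that $\max$ of the three normalized modulations is $\gtrsim \lan{k_{\max}}^{\al-\be}$ (using $|m|^{\al+1}+|n|^{\al+1}$-type convexity bounds and treating the nearly-resonant region $m\sim n\sim k$ via the mean value theorem). This lets me redeem the derivative loss: the top-frequency factor $|k|^{1-\f\be2}$ needs to be absorbed, and the modulation gain provides a power $\lan{k}^{-(\al-\be)\theta}$ for whichever of the three modulations we spend. Balancing shows the worst case is the high-high interaction, and the exponent becomes non-positive precisely when $1-\be \le (\al-\be)\theta$ for an admissible $\theta<\tfrac12$, which is solvable exactly under the hypothesis $2-\al<\be$ (equivalently $1-\be < \al-1$), choosing $b>\tfrac12$ small and $\ve>0$ small. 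After the pointwise bound $\mathcal{M}\lesssim 1$, I would close with Cauchy–Schwarz in the modulation/convolution variables together with the $\ell^2$ Cauchy–Schwarz in the frequency splitting $k=m+n$, reducing the whole expression to $\n{f}{L^2}\n{h}{L^2}=\n{u}{Z^b}\n{v}{Z^b}$.

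\textbf{Main obstacle.} The hard part is the resonance estimate together with the summation/integration that turns the pointwise multiplier bound into the bilinear $L^2$ bound. Unlike the linear term in Lemma~\ref{le:gdg}, here the convolution structure couples two modulation variables $\tau_1$ and $\tau-\tau_1$, so after spending the dominant modulation I must still integrate in the remaining free variable and sum over the frequency decomposition; controlling the counting in $m$ (for fixed $k$, with $n=k-m$) requires that the leftover modulation weights decay fast enough to be summable, which is where the restriction $b>\tfrac12$ (so that $\lan{\cdot}^{-2b}$ is integrable) and the case analysis $m\sim n\sim k$ versus $|m|\gg|n|$ genuinely enter. The delicate point is verifying that the single dispersive gain $\lan{k}^{-(\al-\be)\theta}$ simultaneously covers the full derivative $\p_x$ \emph{and} leaves enough to ensure summability, which pins down the admissible range of $b$ and forces the sharp condition $\be>2-\al$.
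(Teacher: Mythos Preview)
Your overall architecture matches the paper's proof: reduce via Proposition~\ref{pro:cont}, then estimate a trilinear multiplier using a resonance identity, and close by Cauchy--Schwarz. However, two linked steps in your sketch are too weak to close.

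First, your resonance claim is understated. You assert that the maximum of the three normalized modulations is $\gtrsim \lan{k_{\max}}^{\al-\be}$. The paper proves the sharper bound $L_{\max}\gtrsim N_{\max}^{\al-\be}N_{\min}$ (Claim~\ref{cl:h}), coming from $\bigl|\sum_j k_j|k_j|^\al\bigr|\gtrsim N_{\max}^\al N_{\min}$. The extra factor $N_{\min}$ is not cosmetic: it is what eventually produces the $N_{\min}^{-\f12-\ve}$ needed for summation. Moreover there is a special case you do not mention: when $L_{\max}$ sits at the same index as $N_{\min}$, the normalization is by $\lan{k_{\min}}^\be$ rather than $\lan{k_{\max}}^\be$, giving the stronger gain $L_{\max}\gtrsim N_{\max}^{\al}N_{\min}^{1-\be}$, and this case must be treated separately.

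Second, ``$\mathcal{M}\lesssim 1$ then Cauchy--Schwarz'' does not close the estimate. On $\bt$ the convolution $\sum_{m+n=k}$ in $\ell^2_k$ requires an $\ell^1$ input somewhere, which means you must extract a factor $N_{\min}^{-\f12-\ve}$ from the multiplier; and integrating out the free modulation variable $\tau_1$ requires reserving a full weight $\lan{k_{j_1}}^{-\f\be2}\lan{\tfrac{\tau-L_{k_{j_1}}}{\lan{k_{j_1}}^\be}}^{-b}$ (with $b>\tfrac12$) on one of the remaining factors so that it lies in $L^1_\tau$ (this is the paper's Claim~\ref{cl:mult}). Your balancing $1-\be\le(\al-\be)\theta$ only handles the $N_{\max}$ exponent; it does not furnish the $N_{\min}^{-\f12-\ve}$ nor the reserved $N_{j_1}^{-\f\be2}L_{j_1}^{-b}$, and in the special case the index $j_1$ must be chosen distinct from the $N_{\min}$ index, which is exactly where the stronger resonance bound $N_{\max}^{\al}N_{\min}^{1-\be}$ is needed. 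The paper's Claim~\ref{cl:main} carries out precisely this bookkeeping and is where the admissible range of $b$ is pinned down (two pairs of upper bounds on $b$, not one).
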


\textit{Remark:} From \cite[Lemma 6.1]{MR1329387}, it was widely known that such dispersive bilinear estimate for KdV (i.e. $\al=2$) cannot be established for $b>\f{1}{2}$.  On the other hand, by adding a slight localized dissipation ($\be>0$), this bilinear estimate can be established even for the periodic KdV.

\begin{proof}
Denote $\ve>0$ be a small number to be chosen later.  Note that, $b-1 >-\f{1}{2}$, so using \eqref{eq:embed2} and \eqref{eq:2}, we write
\[
\n{D^{-\be (b-\f{1}{2})} \p_x (uv)}{Z^{b-1}_T} \lesssim_{b}  T^{\ve}\n{D^{-\be (b-\f{1}{2})}\p_x (uv)}{Z^{b-1+\ve}}.
\]
Now we estimate the norm on the RHS above.
Note the dual of $(Z^{b-1+\ve})^* = Z^{1-b-\ve}$.  Using duality, the norm on the LHS can be written as 
\[
\sup_{\n{w}{Z^{ 1-b-\ve}} = 1} \abs{\int_{\br} \int_{\bt} u(t,x) v(t,x)\, D^{-\be (b-\f{1}{2})}\p_x w(t,x)\, dx\, dt}.
\]

Using Plancherel and neglecting the complex conjugate on $w$ (due to the fact $\overline{Z^{b}} = Z^{b}$ in our case), we can write the integral as
\begin{equation}\label{eq:norm2}
\sum_{k_1 + k_2 + k_3=0} \int_{\tau_1 + \tau_2 + \tau_3=0} \wh{u}_{k_1}(\tau_1) \,\wh{v}_{k_2} (\tau_2)\, k_3|k_3|^{-\be(b-\f{1}{2})}\wh{w}_{k_3} (\tau_3) \, d\sigma,
\end{equation} where $d\sigma$ is the inherited measure on the plane $\tau_1 + \tau_2+ \tau_3 = 0$.

For simplification, we introduce a few notations developed by Tao \cite{MR1854113}:

For $j=1,2,3$, denote $N_j$ and $L_j$ to be dyadic indices such that $|k_j| \sim N_j$ and $\ds \lan{\f{\tau_j- L_{k_j}}{\lan{k_j}^\be}}\sim L_j$ .  Denote $N_{\max} := \max \{N_1, N_2, N_3\}$, and analogously for notations $N_{\med}$, $N_{\min}$, $L_{\max}$, $L_{\med}$, $L_{\min}$.

Following are key algebraic lemmas, which will be used to prove our desired estimate.

\begin{claim}\label{cl:d}
Let  $k_1 + k_2 + k_3=0$ and $k_1 k_2 k_3 \neq 0$.  Then, for $\al\geq 1$,
\[
\abs{\sum_{j=1}^3 k_j |k_j|^\al} \gtrsim N_{\max}^{\al} N_{\min}.
\]
\end{claim}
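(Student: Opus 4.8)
The plan is to reduce the claim to a one-variable estimate via the mean value theorem, after exploiting the sign structure forced by the constraint $k_1 + k_2 + k_3 = 0$. Write $f(x) = x|x|^\al$, which is an odd, strictly increasing function, so that the quantity to be bounded is $\Phi := f(k_1) + f(k_2) + f(k_3)$.

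First I would pin down the sign configuration. Since $k_1, k_2, k_3$ are nonzero and sum to zero, they cannot all share a sign; hence exactly two are positive and one is negative, or vice versa. Because $f$ is odd, replacing each $k_j$ by $-k_j$ sends $\Phi$ to $-\Phi$ while leaving $|\Phi|$ and every $N_j = |k_j|$ unchanged, so without loss of generality two frequencies are positive and one is negative. Relabel the positive ones as $a \geq b > 0$ and the negative one as $c = -(a+b)$. The key structural consequence is $|c| = a+b \geq \max(a,b)$, so the negative frequency carries the largest magnitude: $N_{\max} = a+b$, $N_{\med} = a$, $N_{\min} = b$, and in particular $a \geq (a+b)/2 = N_{\max}/2$.

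Then I would compute $\Phi = a^{\al+1} + b^{\al+1} - (a+b)^{\al+1}$ and bound it from below. Superadditivity of $t \mapsto t^{\al+1}$ for exponent $\al+1 \geq 1$ shows $\Phi < 0$, so $|\Phi| = (a+b)^{\al+1} - a^{\al+1} - b^{\al+1}$. Applying the mean value theorem to $t \mapsto t^{\al+1}$ on $[a, a+b]$ gives $(a+b)^{\al+1} - a^{\al+1} = (\al+1)\xi^\al b$ for some $\xi \in (a, a+b)$, whence $|\Phi| = b\big[(\al+1)\xi^\al - b^\al\big]$. Since $\xi > a \geq b$ we have $b^\al \leq \xi^\al$, and as $\al \geq 1$ this yields $(\al+1)\xi^\al - b^\al \geq \al\,\xi^\al$. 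Finally $\xi \geq a \geq N_{\max}/2$ gives $\xi^\al \geq 2^{-\al} N_{\max}^\al$, so $|\Phi| \geq \al\, 2^{-\al}\, N_{\max}^\al\, b = \al\, 2^{-\al}\, N_{\max}^\al N_{\min}$, which is the claimed bound with implicit constant depending only on $\al$.

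I expect the main obstacle to be bookkeeping rather than analysis: correctly organizing the sign reduction so that the superadditivity points in the useful direction, and arranging the mean value theorem so that the surviving power is exactly $N_{\max}^\al$ rather than $N_{\med}^\al$ or $N_{\min}^\al$. This last point is what makes the estimate sharp; because $N_{\max} \sim N_{\med}$ here the distinction is mild, but one must retain the factor $\xi \geq N_{\max}/2$ instead of crudely bounding $\xi$ below by $b$.
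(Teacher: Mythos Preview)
Your proof is correct and rests on the same two ingredients as the paper's argument: the sign structure forced by $k_1+k_2+k_3=0$ together with the mean value theorem applied to $t\mapsto t^{\al+1}$. The organization differs slightly. The paper first orders the $k_j$ by magnitude and then splits into the regimes $N_{\min}\ll N_{\max}$ and $N_{\min}\sim N_{\max}$, handling each case with a separate MVT computation. You instead reduce by sign first, observe that the single negative frequency automatically carries the largest magnitude $|c|=a+b=N_{\max}$, and then a single application of the MVT on $[a,a+b]$ covers both regimes at once. Your route is a little cleaner in that it avoids the case split and yields an explicit constant $\al\,2^{-\al}$; the paper's version has the minor expository advantage of making the two frequency configurations visible.
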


\textit{Remark:} Similar arithmetic estimates are shown in \cite{MR2784352, MR3397003}.

\begin{proof}
Without loss of generality, assume $|k_1| \geq |k_2|\geq |k_3|$.  Note that the restriction $k_1 + k_2 + k_3 = 0$ forces the identity that both $k_2$ and $k_3$ share a sign that is opposite to $k_1$. Further, this leads to the identity: $|k_1| - |k_2| = |k_3|$.  We will use this in our computation in the following.

We can split into two cases.  First is when $|k_1| \sim |k_2| \gg |k_3|$, and the second is when $|k_1| \sim |k_2| \sim |k_3|$.

In the first case, the third term $k_3 |k_3|^\al$ can be ignored.  Then, since $k_1$ and $k_2$ have opposite signs,
\[
 \sum_{j=1}^3 k_j |k_j|^\al \sim |k_1|^{\al+1} - |k_2|^{\al+1}  = (|k_1| - |k_2|) (\al+1) |k^*|^{\al}
 \]
for some $k^* \in (|k_2|, |k_1|)\sim N_{\max}$.  Finally, note that $|k_1| - |k_2| = |k_3| \sim N_{\min}$.  This proves the claim for the case $N_1 \sim N_2 \gg N_3$.
 
 Next, consider the second case.  By writing $k_2 = - (k_1 + k_3)$, we can write
\[
  \sum_{j=1}^3 k_j |k_j|^\al = k_1 (|k_1|^{\al} - |k_2|^\al) - k_3 (|k_2|^\al - |k_3|^\al). 
  \]
Since $k_1$ and $k_3$ have opposite signs and both parenthesized terms are positive, it suffices to take the first term to estimate the sum.  Again, using MVT, 
\[
 \sum_{j=1}^3 k_j |k_j|^\al \sim k_1 (|k_1|^{\al} - |k_2|^\al) = k_1 (|k_1| - |k_2|) \al |k^*|^{\al-1}
 \]
 for some $k^* \in (|k_2|, |k_1|)\sim N_{\max}$.  Again, using $|k_1|-|k_2| = |k_3|$, we conclude the proof.
\end{proof}

The following claim will give us the dispersive gain via our new Bourgain space.
\begin{claim}\label{cl:h}
Let $\tau_1 + \tau_2 + \tau_3=0$, $k_1 + k_2 + k_3=0$, and $k_1 k_2 k_3 \neq 0$.  Then $L_{\max} \gtrsim N_{\max}^{\al-\be} N_{\min}$.\\

Further, consider the special case when $L_{\max}$ occurs at the same index as $N_{\min}$: Let $j_0 \in \{1,2,3\}$ satisfy $N_{j_0} = N_{\min}$.  If $L_{j_0} = L_{\max}$, then $L_{\max} \gtrsim N_{\max}^\al N_{\min}^{1-\be}$.
\end{claim}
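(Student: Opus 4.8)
The plan is to exploit the algebraic identity $\sum_{j} \tau_j = 0$ together with Claim~\ref{cl:d}. Since $\tau_1 + \tau_2 + \tau_3 = 0$, I would write
\[
\sum_{j=1}^3 (\tau_j - k_j|k_j|^\al) = -\sum_{j=1}^3 k_j|k_j|^\al,
\]
so that by the triangle inequality $\max_j |\tau_j - L_{k_j}| \gtrsim |\sum_j k_j|k_j|^\al| \gtrsim N_{\max}^\al N_{\min}$, where the last bound is exactly Claim~\ref{cl:d}. The only remaining task is to convert the lower bound on $\max_j |\tau_j - L_{k_j}|$ into the desired lower bound on $L_{\max}$, accounting for the normalization factor $\lan{k_j}^\be$ built into the definition of $L_j$.

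\textbf{First statement.} Recall $L_j \sim \lan{\f{\tau_j - L_{k_j}}{\lan{k_j}^\be}}$, so $|\tau_j - L_{k_j}| \sim \lan{k_j}^\be L_j$ (up to the harmless additive constant). Let $j_1$ be the index achieving $\max_j |\tau_j - L_{k_j}|$. Then
\[
\lan{k_{j_1}}^\be L_{j_1} \sim |\tau_{j_1} - L_{k_{j_1}}| \gtrsim N_{\max}^\al N_{\min}.
\]
Since $\lan{k_{j_1}}^\be \lesssim N_{\max}^\be$, dividing gives $L_{j_1} \gtrsim N_{\max}^{\al-\be} N_{\min}$, and since $L_{\max} \geq L_{j_1}$ the first claim follows. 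The crucial point making this clean is that we discard the $\lan{k}^\be$ weight \emph{in our favor}: a larger weight only makes the corresponding $L_j$ smaller, so bounding $\lan{k_{j_1}}^\be$ above by $N_{\max}^\be$ is the right move.

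\textbf{Special case.} Now suppose $L_{\max}$ occurs at the index $j_0$ with $N_{j_0} = N_{\min}$. Then the normalization weight attached to $L_{\max}$ is $\lan{k_{j_0}}^\be \sim N_{\min}^\be$, which is much smaller than $N_{\max}^\be$. Running the same estimate, we still have $|\tau_{j_0} - L_{k_{j_0}}| \gtrsim N_{\max}^\al N_{\min}$ provided $L_{j_0}$ is also the index realizing $\max_j |\tau_j - L_{k_j}|$; this requires a brief argument, since a priori $L_{\max}$ (largest \emph{normalized} modulation) need not coincide with the largest \emph{unnormalized} modulation. I would argue that when $j_0$ is the smallest-frequency index, its small weight $N_{\min}^\be$ forces $|\tau_{j_0} - L_{k_{j_0}}| = \lan{k_{j_0}}^\be L_{j_0}$ to still dominate: concretely, one checks $\lan{k_{j_0}}^\be L_{\max} \gtrsim \lan{k_i}^\be L_i$ for each $i$ because $L_{\max} \geq L_i$ and the weight discrepancy can be absorbed, so $|\tau_{j_0}-L_{k_{j_0}}|$ is comparable to $\max_i|\tau_i - L_{k_i}|$ up to a factor of $(N_{\max}/N_{\min})^\be$. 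Then dividing the lower bound $N_{\max}^\al N_{\min}$ by $\lan{k_{j_0}}^\be \sim N_{\min}^\be$ yields $L_{\max} = L_{j_0} \gtrsim N_{\max}^\al N_{\min}^{1-\be}$.

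The main obstacle is precisely this last interplay between the normalized and unnormalized modulations in the special case. The identity $\sum_j(\tau_j - L_{k_j}) = -\sum_j k_j|k_j|^\al$ lower-bounds the largest \emph{unnormalized} $|\tau_j - L_{k_j}|$, but the claim is phrased in terms of the \emph{normalized} $L_j$, and the two need not be attained at the same index; the whole subtlety is ensuring that when $L_{\max}$ sits at the low-frequency slot its small normalization weight works to our advantage rather than against us. Once this bookkeeping is handled correctly, both bounds reduce to Claim~\ref{cl:d} plus elementary comparisons of dyadic sizes.
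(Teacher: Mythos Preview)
Your treatment of the first statement is correct and essentially the same as the paper's. The only cosmetic difference is that the paper divides the identity $\sum_j(\tau_j - L_{k_j}) = -\sum_j k_j|k_j|^\al$ through by the \emph{common} weight $\lan{k_1}^\be \sim N_{\max}^\be$ before taking the maximum, arriving at
\[
\max\Big(L_1,\ L_2,\ L_3\,(N_{\min}/N_{\max})^\be\Big) \gtrsim N_{\max}^{\al-\be}N_{\min},
\]
whereas you take the maximum of the unnormalized modulations first and divide afterward. Both routes give the first bound.

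For the special case your argument has a genuine gap. You correctly flag that the unnormalized maximum $\max_i |\tau_i - L_{k_i}|$ need not sit at $j_0$, and you then say $|\tau_{j_0}-L_{k_{j_0}}|$ is comparable to this maximum ``up to a factor of $(N_{\max}/N_{\min})^\be$''. That comparison is true, but the loss is exactly the factor you are trying to gain: from
\[
|\tau_{j_0}-L_{k_{j_0}}| \gtrsim (N_{\min}/N_{\max})^\be\, N_{\max}^\al N_{\min}
\]
you divide by $\lan{k_{j_0}}^\be \sim N_{\min}^\be$ and recover only $L_{j_0}\gtrsim N_{\max}^{\al-\be}N_{\min}$, i.e.\ the first bound again. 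Your final line divides $N_{\max}^\al N_{\min}$ by $N_{\min}^\be$ as though the loss factor were not there; the assertion that ``the weight discrepancy can be absorbed'' is precisely what fails. The paper's common normalization by $N_{\max}^\be$ avoids this bookkeeping trap: in the displayed three-term maximum above the low-frequency entry is already $L_3 (N_{\min}/N_{\max})^\be$, so when \emph{that} entry dominates one reads off $L_3 \gtrsim N_{\max}^{\al} N_{\min}^{1-\be}$ directly, the small weight now appearing once and on the correct side.
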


\begin{proof}
Again, we assume without loss of generality that $|k_1| \geq |k_2|\geq |k_3|$.  Then note
\[
\f{\tau_1 + k_1 |k_1|^\al}{\lan{k_1}^\be} + \f{\tau_2 + k_2 |k_2|^\al}{\lan{k_1}^\be}  + \f{\tau_3 + k_j |k_3|^\al}{\lan{k_1}^\be}  = \f{\sum_{j=1}^3 k_j |k_j|^\al}{\lan{k_1}^\be}.
\]
Since $N_1 \sim N_2$, three terms on the LHS above are respectively of order $L_1$, $L_2$, $L_3 N_{\min}^{\be} N_{\max}^{-\be}$.   By Claim~\ref{cl:d}, the RHS is of order $N^{\al-\be}_{\max} N_{\min}$. So it must be the case that
\[
\max\pr{L_1, L_2, L_3 N_{3}^{\be} N_{1}^{-\be}} \gtrsim N^{\al-\be}_{\max} N_{\min}.
\]

Noting $N_{3}^{\be}  N_{1}^{-\be} \leq 1$, we obtain $L_{\max} \gtrsim N_{\max}^{\al-\be} N_{\min}$.   Also, the special case in the claim directly follows from the expression above. This proves our claim.
\end{proof}

We return our attention now to the estimate of \eqref{eq:norm2}.  First, we will localize spatial and modulational frequencies by $N_j$ and $L_j$ using partition of unity and write
\begin{equation}\label{eq:norm3}
\sum_{N_j, L_j\geq 1} \int_\Ga \wh{u_{k_1}}(\tau_1) \wh{v_{k_2}}(\tau_2) k_3|k_3|^{-\be(b-\f{1}{2})} \wh{w_{k_3}}(\tau_3) \, d\si,
\end{equation}
where $\Ga := \{(\tau_1,\tau_2,\tau_3; k_1, k_2, k_3) : \tau_1 + \tau_2 + \tau_3 = 0, k_1 + k_2+ k_3 =0\}.$  Now, note that the integrand can be written as 
\[
\f{N_3^{1-\f{\be}{2}+\ve\be}}{N_1^{ \f{\be}{2}}\, N_2^{ \f{\be}{2}} \, L_1^b \,L_2^b \,L_3^{1-b-\ve}} \pr{N_1^{ \f{\be}{2}} L_1^b \wh{u_{k_1}}(\tau_1)  }   \pr{N_2^{ \f{\be}{2}} L_2^b \wh{v_{k_2}}(\tau_2) }  \pr{N_3^{1-b-\ve} L_3^{1-b-\ve} \wh{w_{k_3}}(\tau_3)  }. 
\]
Denote the parenthesized functions above repectively by $f_{k_1}(\tau_1)$, $g_{k_2}(\tau_2)$, $h_{k_3} (\tau_3)$.

To estimate this expression, we need the following main claim:

\begin{claim}\label{cl:main}
Let $\al+ \be >2$.  Then there exists $\ve>0$ and $b>\f{1}{2}$ and $j_1 \in \{1,2,3\}$ satisfying
\[
\f{N_3^{1-\f{\be}{2}+\ve \be}}{N_1^{\f{\be}{2}}\, N_2^{\f{\be}{2}} \, L_1^b \,L_2^b \,L_3^{1-b-\ve}} \lesssim L_{\max}^{-\ve} \,N_{min}^{- \f{1}{2} - \ve }\, N_{j_1}^{-\f{\be}{2}}  \,  L_{j_1}^{-b}.
\]
\end{claim}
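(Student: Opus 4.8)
I would treat Claim~\ref{cl:main} as a purely algebraic estimate on the dyadic parameters $N_j,L_j$, whose only analytic input is the modulation lower bound of Claim~\ref{cl:h}. First I would record the structural constraint forced by $k_1+k_2+k_3=0$: two of the three frequencies are comparable to the maximum, so $N_{\max}\sim N_{\med}\gtrsim N_{\min}$ and there is a well-defined low-frequency index. The object to be controlled is the full-derivative numerator $N_3^{1-\f{\be}{2}+\ve\be}$, which encodes the nonlinear $\p_x$ surviving after the $\be/2$ dissipative smoothing; the point is to absorb it into the reciprocal Bourgain weight $N_{j_1}^{-\f{\be}{2}}L_{j_1}^{-b}$ of one input function together with the dispersive gain hidden in $L_{\max}$.

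\textbf{Mechanism.} I would select $j_1\in\{1,2\}$ (an index attached to $u$ or $v$, whose weight is exactly $N^{\f{\be}{2}}L^{b}$) so that $L_{j_1}$ is \emph{not} the largest modulation: if $L_{\max}$ sits at one of the two input indices, take $j_1$ to be the other input index; if $L_{\max}$ sits at the dual index $3$, take either $j_1\in\{1,2\}$. Canceling $L_{j_1}^{-b}$ against the matching factor in the denominator leaves $L_{\max}$ free to be bounded from below. Here I would invoke Claim~\ref{cl:h} in two regimes: the basic bound $L_{\max}\gtrsim N_{\max}^{\al-\be}N_{\min}$ when $L_{\max}$ occurs at a high-frequency index, and the refined bound $L_{\max}\gtrsim N_{\max}^{\al}N_{\min}^{1-\be}$ precisely when $L_{\max}$ occurs at the low-frequency index. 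After substituting, the inequality collapses to a monomial comparison in $N_{\max}$ and $N_{\min}$.

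\textbf{The balance.} Taking $b=\f{1}{2}$ and $\ve=0$ for orientation, I expect each configuration --- derivative on a high frequency ($N_3\sim N_{\max}$), derivative on the low frequency ($N_3=N_{\min}$), and all frequencies comparable --- to reduce, after cancellation of one modulation factor and insertion of the appropriate $L_{\max}$ lower bound, to an inequality of the form $N_{\max}^{1-\f{\al+\be}{2}}\lesssim 1$ (possibly with an extra nonpositive power of $N_{\min}/N_{\max}$). This holds exactly under $\al+\be\geq 2$. Since the hypothesis is the \emph{strict} inequality $\al+\be>2$, the exponent is strictly negative, so by continuity the estimate persists for $b$ slightly above $\f{1}{2}$ and for a small $\ve>0$; I would then fix one such admissible pair $(b,\ve)$ uniformly and record the per-configuration choice of $j_1$.

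\textbf{Main obstacle.} The delicacy is the endpoint nature of the balance: the whole estimate lives exactly at $\al+\be=2$, so no slack may be wasted. In particular, the case where the maximal modulation sits on the low-frequency mode is genuinely borderline under the basic bound $L_{\max}\gtrsim N_{\max}^{\al-\be}N_{\min}$ and is rescued only by the sharper bound $L_{\max}\gtrsim N_{\max}^{\al}N_{\min}^{1-\be}$. Verifying that this sharper bound is available exactly when required, and that a single pair $(b,\ve)$ works simultaneously across all frequency/modulation regimes, is where the care must go. The bookkeeping ensuring $L_{\max}$ is never the modulation factor one cancels is the other point requiring attention, though it is routine once the case split is fixed.
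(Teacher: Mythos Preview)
Your proposal is correct and follows essentially the same route as the paper: reduce to a monomial inequality in $N_{\max},N_{\min}$ by canceling one modulation weight $L_{j_1}^{-b}$ with $j_1$ chosen away from $L_{\max}$, then feed in the two-tiered lower bound of Claim~\ref{cl:h} (basic bound $L_{\max}\gtrsim N_{\max}^{\al-\be}N_{\min}$ generically, refined bound $L_{\max}\gtrsim N_{\max}^{\al}N_{\min}^{1-\be}$ precisely when $L_{\max}$ sits at the low-frequency index). Your endpoint orientation at $b=\tfrac12$, $\ve=0$ yielding the threshold exponent $1-\tfrac{\al+\be}{2}$ is exactly the balance the paper's explicit range computations for $b$ encode; the paper just writes out the admissible intervals for $b$ case by case rather than invoking continuity, but the content is the same.

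One small imprecision: when $L_{\max}=L_3$ and the low frequency lies among the inputs (say $N_1=N_{\min}$, $N_2\sim N_3\sim N_{\max}$), the choice $j_1=2$ does \emph{not} close under the basic bound---you would need $N_{\max}^{1-\al/2}\lesssim N_{\min}^{\be/2}$, which fails for $\al<2$. In that configuration you must take $j_1=1$, the low-frequency input. So ``take either $j_1\in\{1,2\}$'' should read ``take $j_1$ to be the low-frequency input if it exists, otherwise either works.'' The paper sidesteps this by allowing $j_1\in\{1,2,3\}$ and, in the non-special case, implicitly placing $j_1$ at the $N_{\min}$ index; your restriction to $\{1,2\}$ is harmless once this choice is made carefully.
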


\begin{proof}
First, note that $\max(N_1, N_2) \sim N_{\max}$. So
\[
\f{N_3^{1-\f{\be}{2}+\ve \be}}{N_1^{ \f{\be}{2}}\, N_2^{\f{\be}{2}} \, L_1^b \,L_2^b \,L_3^{1-b-\ve}}\lesssim \f{N_{\max}^{1-\be+\ve\be}}{N_{\min}^{\f{\be}{2}} L_{\max}^{1-b-\ve} L_{\med}^{b} L_{\min}^{b}}.
\]

First, we will show the estimate using $L_{\max}\gtrsim N_{\max}^{\al-\be} N_{\min}$ from Claim~\ref{cl:h}.  This will cover all cases except the special case mentioned in Claim~\ref{cl:h} Consider
\begin{equation}\label{eq:main}
\f{N_{\max}^{1-\be+\ve\be}}{N_{\min}^{\f{\be}{2}} L_{\max}^{1-b-\ve} L_{\med}^{b} L_{\min}^{b}} \lesssim \f{N_{\max}^{1-\be+\ve\be - (\al-\be)(1-b-2\ve)}}{N_{\min}^{\f{\be}{2}+ (1-b-2\ve)} L_{\max}^{\ve} L_{\med}^{b} L_{\min}^{b}}.
\end{equation}

First, we need to ensure that the exponent of $N_{\max}$ is negative.    For this, it is sufficient to establish is $(1-\be) < (1-b)(\al-\be) \implies \ds \f{\al-1}{\al-\be}>b$.  In order to ensure that this is compatible with $b>\f{1}{2}$, we need
\[
\f{\al-1}{\al-\be}> \f{1}{2}  \iff \al + \be >2  \qquad \tn{ assuming } \al> \be.
\]

Next, we need to make the exponent of $N_{\min}$ in the denominator to equal $\f{\be}{2} + \f{1}{2} + \ve$.  For this, we need a contribution of $N_{\min}^{-(b- \f{1}{2} + 3 \ve)}$, which will come from the left-over gain of $N_{\max}$.  To ensure that this is possible, we need to establish
\[
(1-\be) - (\al-\be)(1-b) + \pr{b-\f{1}{2}} <0 \implies \f{\al-\f{1}{2}}{\al-\be + 1} > b.
\]
But again, simple calculations show that
\[
\al+\be > 2 \iff \f{\al-\f{1}{2}}{\al-\be + 1} > \f{1}{2}.
\]
To conclude the numerology, given $\al>\be$ satisfying $\al+\be>2$, we can choose $b$ to satisfy
\[
\f{1}{2} < b < \min\pr{\f{\al-1}{\al-\be}, \,\f{\al-\f{1}{2}}{\al-\be + 1}}
\]
and we can choose $\ve>0$ so that
\[
\f{N_{\max}^{1-\be +\ve\be - (\al-\be)(1-b-2\ve)}}{N_{\min}^{\f{\be}{2}+ (1-b-2\ve)} L_{\max}^{\ve} L_{\med}^{b} L_{\min}^{b}} \lesssim \f{1}{N_{\min}^{\f{\be}{2} + \pr{\f{1}{2}+\ve}} L_{\max}^{\ve} L_{\med}^{b} L_{\min}^{b}}.
\]

Note that this gives the desired statement except when we are in the special case mentioned in Claim~\ref{cl:h}.  Now, consider the special case when $N_{j_0} = N_{\min}$ and $L_{\max} = L_{j_0}$.  In that case, the estimate for \eqref{eq:main} is slightly modified as follows.  Since $L_{\max}\gtrsim N_{\max}^{\al} N_{\min}$, instead of \eqref{eq:main}, we get
\[
\f{N_{\max}^{1-\be+\ve\be}}{N_{\min}^{\f{\be}{2}} L_{\max}^{1-b-\ve} L_{\med}^{b} L_{\min}^{b}} \lesssim \f{N_{\max}^{1-\be +\ve\be- \al(1-b-2\ve)}}{N_{\min}^{\f{\be}{2}+ (1-\be)(1-b-2\ve)} L_{\max}^{\ve} L_{\med}^{b} L_{\min}^{b}}.
\]

Let $j_1 \in \{1,2,3\}\setminus \{j_0\}$.  Since $L_{j_1} \neq L_{\max}$, either $L_{j_1}= L_{\med}$ or $L_{\min}$.  We borrow $\be/2$~power of $N_{\max}$ to write 
\[
\f{N_{\max}^{1-\be+\ve\be - \al(1-b-2\ve)}}{N_{\min}^{\f{\be}{2}+ (1-\be)(1-b-2\ve)} L_{\max}^{\ve} L_{\med}^{b} L_{\min}^{b}} \leq \f{N_{\max}^{1-\f{\be}{2}+\ve\be - \al(1-b-2\ve)} }{N_{\min}^{\f{\be}{2}+ (1-\be)(1-b-2\ve)} L_{\max}^{\ve}  N_{j_1}^{\f{\be}{2}} L_{j_1}^{b}}.
\]

We proceed as before:  First, to ensure that the exponent of $N_{\max}$ is negative,
\[
1-\f{\be}{2} - \al(1-b) <0 \iff 1-\f{1-\be/2}{\al} > b.
\]
This is compatible with $b>\f{1}{2}$ if and only if $\al+\be >2$.

Next, to ensure that we can borrow enough remaining derivative from $N_{\max}$ to contribute to $N_{\min}$, we need 
\[
1-\f{\be}{2} - \al(1-b) - \f{\be}{2}- (1-\be)(1-b) < - \f{1}{2} \iff
1-\f{\f{3}{2}-\be}{\al - \be+1} > b
\] 
which is compatible with $b>\f{1}{2}$ if and only of $\al + \be >2$.

Thus, to conclude the special case, given $\al>\be$ with $\al+\be>2$, we can choose $b$ satisfying
\[
\f{1}{2} < b< \min\pr{  1-\f{1-\be/2}{\al}, \, 1-\f{\f{3}{2}-\be}{\al - \be+1}}
\]
and we can choose $\ve>0$ so that
\[
\f{N_{\max}^{1-\f{\be}{2}+\ve\be - \al(1-b-2\ve)} }{N_{\min}^{\f{\be}{2}+ (1-\be)(1-b-2\ve)} L_{\max}^{\ve}  N_{j_1}^{\f{\be}{2}} L_{j_1}^{b}} \lesssim  \f{1}{L_{\max}^\ve N_{\min}^{\f{1}{2}+\ve} N_{j_1}^{\f{\be}{2}} L_{j_1}^b}.
\]

This proves the claim.
\end{proof}

Now we return to the proof of the Lemma~\ref{le:nonl}.  It follows from Claim~\ref{cl:main} that 
\[
\eqref{eq:norm3}\lesssim \sum_{N_j,L_j\geq 1} L_{\max}^{-\ve} N_{\min}^{- \f{1}{2}-\ve} N_{j_1}^{-\f{\be}{2}}  L_{j_1}^{-b} \int_{\Ga} f_{k_1}(\tau_1) g_{k_2}(\tau_2) h_{k_3}(\tau_3)\, d\sigma,
\]
where $j_1\in \{1,2,3\}$.  To close this estimate, we will need a following, very rough, multilinear $L^2$ convolution-type estimates:
\begin{claim}\label{cl:mult} The following holds
\[
N_{\min}^{- \f{1}{2}-\ve}N_{j_1}^{-\f{\be}{2} } L_{j_1}^{-b} \int_{\Ga} f_{k_1}(\tau_1) g_{k_2}(\tau_2) h_{k_3}(\tau_3)\, d\sigma \lesssim \n{f}{L^2_{\tau,k}} \n{g}{L^2_{\tau,k}} \n{h}{L^2_{\tau,k}}. 
\]
\end{claim}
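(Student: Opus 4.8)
The plan is to read the left side as a trilinear convolution over the frequency–modulation lattice and reduce it, by two applications of Cauchy–Schwarz, to a geometric bound on the measure of the overlap of the frequency supports. Write $F_1 = f$, $F_2 = g$, $F_3 = h$, regarded as functions on $\bz^*\times\br$, and recall that $\int_\Ga$ includes the summation over $k_1+k_2+k_3=0$. Singling out one factor, say with index $a$, and pairing it against the $\tau$-convolution of the remaining two indices $b,c$, Cauchy–Schwarz in $(k_a,\tau_a)$ gives
\[
\abs{\int_\Ga F_{1}F_{2}F_{3}\,d\si}\lesssim \n{F_a}{L^2_{\tau,k}}\,\n{F_b * F_c}{L^2_{\tau,k}}.
\]
A second Cauchy–Schwarz, carried out after restricting the inner sum–integral to the set $E=E(k_a,\tau_a)$ on which both surviving factors are supported, yields $\n{F_b*F_c}{L^2_{\tau,k}}\lesssim \sup_{k_a,\tau_a}|E|^{1/2}\,\n{F_b}{L^2_{\tau,k}}\n{F_c}{L^2_{\tau,k}}$; here the outer integration collapses to $\n{F_c}{L^2_{\tau,k}}^2$ under the change of variables $(k_a,\tau_a)\mapsto(k_c,\tau_c)=(-k_a-k_b,\,-\tau_a-\tau_b)$. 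Everything thus reduces to bounding $\sup|E|$.

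The set $E$ consists of those $(k_b,\tau_b)$ with $|k_b|\sim N_b$, $|k_c|\sim N_c$ and modulation sizes $L_b,L_c$, where $k_c=-k_a-k_b$ and $\tau_c=-\tau_a-\tau_b$. With $k_a$ fixed, $k_c$ is an affine function of $k_b$, so the number of admissible integers $k_b$ is $\lesssim\min(N_b,N_c)$; for each such $k_b$ the two modulation constraints confine $\tau_b$ to an interval of length $\lesssim\min\pr{L_b\lan{k_b}^\be,\,L_c\lan{k_c}^\be}\sim\min\pr{L_bN_b^\be,\,L_cN_c^\be}$. Hence $|E|\lesssim\min(N_b,N_c)\cdot\min(L_bN_b^\be,L_cN_c^\be)$. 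The decisive choice is the singled-out index: I would take $a$ to be a top-frequency index, $N_a\sim N_{\max}$, with $a\neq j_1$. This is always possible, since $k_1+k_2+k_3=0$ forces $N_{\max}\sim N_{\med}$, so two of the frequencies are comparable to $N_{\max}$ and at least one of them avoids $j_1$. With this choice the pair $\{b,c\}$ contains both $j_1$ and the minimal-frequency index, so $\min(N_b,N_c)=N_{\min}$ and $\min(L_bN_b^\be,L_cN_c^\be)\le L_{j_1}N_{j_1}^\be$.

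Combining the two displays gives $\sup|E|^{1/2}\lesssim N_{\min}^{1/2}L_{j_1}^{1/2}N_{j_1}^{\be/2}$. Since $b\geq\f12$ and every dyadic parameter is $\geq1$, I can absorb $L_{j_1}^{1/2}\le L_{j_1}^{b}$ and $N_{\min}^{1/2}\le N_{\min}^{1/2+\ve}$, which produces precisely the factor $N_{\min}^{1/2+\ve}N_{j_1}^{\be/2}L_{j_1}^{b}$ required, completing the estimate. I expect the only real obstacle to be the bookkeeping of the geometry of $E$: verifying that a top-frequency index distinct from $j_1$ always exists, and tracking how the $\lan{k}^\be$-rescaling inside the modulation weight becomes the clean factor $N^\be$ on each dyadic block. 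The estimate is deliberately lossy — the spare $N_{\min}^{\ve}$ signals that no sharp counting is needed — so I anticipate no genuine analytic difficulty beyond this accounting.
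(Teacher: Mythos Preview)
Your argument is correct and takes a genuinely different route from the paper. The paper does not count overlaps: it simply distributes the two weights onto the corresponding factors, writing (after the WLOG choice $N_{\min}=N_1$, $j_1=2$)
\[
\int_\Ga \pr{\lan{k_1}^{-\f12-\ve}f_{k_1}}\pr{\lan{k_2}^{-\f{\be}{2}}\lan{\tfrac{\tau_2-L_{k_2}}{\lan{k_2}^\be}}^{-b}g_{k_2}}h_{k_3}\,d\si,
\]
and then bounds this trilinear convolution by a Young--type inequality with the mixed split $l^1_kL^2_\tau\times l^2_kL^1_\tau\times L^2_{\tau,k}$. The passage $l^2_k\to l^1_k$ on the first factor costs $\n{\lan{k}^{-\f12-\ve}}{l^2_k}<\infty$, and the passage $L^2_\tau\to L^1_\tau$ on the second costs $\sup_k\n{\lan{k}^{-\f{\be}{2}}\lan{(\tau-L_k)/\lan{k}^\be}^{-b}}{L^2_\tau}$, which is finite exactly when $b>\f12$ by the computation~\eqref{eq:emb}. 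Your double Cauchy--Schwarz with an overlap bound $|E|\lesssim N_{\min}\cdot L_{j_1}N_{j_1}^\be$ is the standard bilinear-$L^2$ alternative; it recovers the same numerology and has the merit of making the geometric mechanism (how many $(k_b,\tau_b)$ sit in the intersection of two modulation slabs) fully explicit, at the price of the small bookkeeping step of choosing $a$ so that $\{b,c\}$ captures both $j_1$ and the minimal frequency. The paper's route is shorter and avoids that case split, but requires spotting the right mixed-norm placement.
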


\begin{proof}
Without loss of generality, assume that $N_1 = N_{\min}$ and $j_1=2$.  The case $j_0 = j_1$ will be simpler, so we omit this case. The LHS of the claim is bounded by
\[
\int_\Ga \pr{\lan{k_1}^{- \f{1}{2}-\ve}f_{k_1}(\tau_1)} \pr{\lan{k_2}^{-\f{\be}{2}} \lan{\f{\tau_2- L_{k_2}}{\lan{k_2}^{\be}}}^{-b} g_{k_2}} (\tau_2) h_{k_3}(\tau_3)\, d\si.
\]

We estimate the above integral via Cauchy-Schwarz and Young's inequality in $l^1_k L^2_{\tau} \times l^2_k L^1_\tau  \times L^2_{\tau,k}$,
\begin{equation}\label{eq:mult}
\n{\lan{k}^{-\f{1}{2}-\ve} f_{k}(\tau)}{l^1_k L^2_{\tau} }  \n{\lan{k}^{\f{-\be}{2}} \lan{\f{\tau- L_{k}}{\lan{k}^{\be}}}^{-b} g_k(\tau) }{l^2_k L^1_\tau} \n{h}{L^2_\tau l^2_k}.
\end{equation}

The first term in \eqref{eq:mult} is bounded by Cauchy-Schwarz in $k$:
\[
\n{\lan{k}^{-\f{1}{2}-\ve} f_{k}(\tau)}{ l^1_k L^2_{\tau}}= \n{\lan{k}^{-\f{1}{2}-\ve} \n{f_{k}(\tau)}{ L^2_{\tau}}}{l^1_k}   \leq \n{\lan{k}^{-\f{1}{2}-\ve}}{l^2_k} \n{f_{k}(\tau)}{L^2_{\tau} l^2_k}
\]
which is bounded as long as $\ve>0$.   The second term in \eqref{eq:mult} is bounded by Cauchy-Swartz in $\tau$:
\[
\n{\lan{k}^{\f{-\be}{2}} \lan{\f{\tau- L_{k}}{\lan{k}^{\be}}}^{-b} g_{k}(\tau)}{l^2_k L^1_{\tau} }  \leq \sup_k \n{\lan{k}^{-\f{\be}{2}} \lan{\f{\tau- L_{k}}{\lan{k}^{\be}}}^{-b}}{L^2_\tau} \n{g}{L^2_\tau l^2_k}.
\]

The first term on the RHS above is bounded by \eqref{eq:emb} if and only if $b>\f{1}{2}$.  This concludes the proof.
\end{proof}

Using this claim, we have
\[
\eqref{eq:norm3} \lesssim \sum_{N_j, L_j\geq 1} L_{\max}^{-\ve}\n{f}{L^2} \n{g}{L^2} \n{h}{L^2}.
\]

Recalling the definition of $f, g, h$
\[
\n{f}{L^2} \n{g}{L^2} \n{h}{L^2} \sim \n{u}{Z^{b}} \n{v}{Z^{b}} \n{w}{Z^{1-b-\ve}}.
\]

Finally, since we are summing over dyadic indices and $L_{\max}$ dominates all other dyadic index, the factor of $L_{\max}^{-\ve}$ makes the summations converge.

This proves the desired bilinear estimate.

\end{proof}

\section{Proof of Theorem~\ref{th:lwp}}\label{sec:th1}

In this section, we will prove Theorem~\ref{th:lwp} using contraction in $Z^b$.  We construct a contraction argument in $Z^{b}_T$ for some $b>\f{1}{2}$ which satisfies Lemma~\ref{le:gdg} and \ref{le:nonl}.  

Let $0<T\ll 1$ be a constant to be determined later.  We want to prove that, for $T>0$ sufficiently small, the operator defined on the RHS of \eqref{eq:duhamel} is a contraction in a ball in $Z^{b}_T$ centered at the free solution $S(t) v_0$.  Define
\begin{align*}
\Ga_1 (v) &:= \int_0^t S(t-s) \cn_1[v](s)\,ds,\\
\Ga_2 (v) &:= \int_0^t S(t-s) \mathcal{R}[v](s) ds,\\
\Ga_3 (u,v) &:= \int_0^t S(t-s) \p_x (u\, v)\, ds.
\end{align*}

Our claim is that, given $v_0 \in L^2_0$, we can find $T>0$ sufficiently small such that $\Ga(v) := S(t) v_0 + \Ga_1 (v) + \Ga_2(v) + \Ga_3(v,v)$ is a contraction in $Z^b_T$ norm for all $v$ in a ball of radius $R$ (to be determined) centered at the free solution.  We need to prove two statements: given $B_R :=\{ u: \n{u-S(t) v_0}{Z^{b}_T} \leq R\}$,
\begin{align}
&\n{\Ga (v) - S(t) v_0 }{Z^{b}_T} \leq R \qquad \tn{ for }\forall v\in B_R, \label{eq:cont1}\\
&\n{\Ga(u) - \Ga(v)}{Z^{b}_T} \leq (1-\de) \n{u - v}{Z^{b}_T} \qquad \tn{ for }\forall u,v \in B_R \tn{ and  some } \de>0. \label{eq:cont2}
\end{align}

We select $R \ll \n{S(t) v_0}{Z^{b}_T}\lesssim_b \n{v_0}{L^2_x}$ so that $\n{v}{Z^b_T} \sim \n{S(t) v_0}{Z^b_T}$ for any $v\in B_R$.  By Proposition~\ref{pro:free}, if $v\in B_R$, then 
\[
\n{v}{Z^{b}_T} \sim \n{S(t) v_0}{Z^{b}_T} \lesssim_b \n{v_0}{L^2_x}.
\]
To show \eqref{eq:cont1}, we use Proposition~\ref{pro:duhamel} and Lemma~\ref{le:gdg},  \ref{le:easy} and \ref{le:nonl} to write
\begin{align*}
\n{\Ga_1 (v)}{Z^{b}_T} + 
\n{\Ga_2 (v)}{Z^{b}_T} + 
\n{\Ga_3 (v,v)}{Z^{b}_T}  &\lesssim_{\ve, b} T^\ve \pr{\n{v}{Z^{b}_T} + \n{v}{Z^{b}_T}^2} \\
&\lesssim_b T^{\ve} \pr{ \n{v_0}{L^2_x} + \n{v_0}{L^2_x}^2  }.
\end{align*}
So, we will have \eqref{eq:cont1} as long as $T$ satisfies
\[
T^{\ve} < \f{R}{\n{v_0}{L^2_x} + \n{v_0}{L^2_x}^2} \ll \f{1}{ 1 + \n{v_0}{L^2_x}}.
\]

Next, we prove \eqref{eq:cont2}.  Let $u,v \in B_R\subset Z^{b}_T$ be arbitrary functions. By triangular inequality, suffices to show that $\n{\Ga_j (u) - \Ga_j(v)}{Z^{b}_T} \leq \f{1}{4} \n{u - v}{Z^{b}_T}$ for $j=1,2$ and $\n{\Ga_3 (u, u) - \Ga_3 (v, v)}{Z^{b}_T} \leq \f{1}{4} \n{u-v}{Z^{b}_T}$.\\

Using Proposition~\ref{pro:duhamel}, Lemma~\ref{le:gdg}, \ref{le:easy} and linearity of $\Ga_1$, we can induce for $j=1,2$,
\[
\n{\Ga_j (u) - \Ga_j(v)}{Z^{b}_T} \lesssim_{b,\ve} T^\ve \n{u - v}{Z^{b}_T}
\]

For $\Ga_3$, we use Proposition~\ref{pro:duhamel} and Lemma~\ref{le:nonl}.  Adding and subtracting by $\Ga_3 (u, v)$ and using bilinearity of $\Ga_3$,
\begin{align*}
\n{\Ga_3 (u, u) - \Ga_3(v,v)}{Z^{b}_T} &= 
\n{\Ga_3 (u+v, u-v)}{Z^{b}_T}\\
 &\lesssim_{b,\ve} T^\ve \n{u+v}{Z^{b}_T} \n{u-v}{Z^{b}_T}\\
&\leq T^\ve \pr{\n{u}{Z^{b}_T}+\n{v}{Z^{b}_T}} \n{u - v}{Z^{b}_T}\\
&\lesssim_b T^\ve \n{v_0}{L^2_x} \n{u - v}{Z^{b}_T}.
\end{align*}

Thus, \eqref{eq:cont2} can be satisfied as long as 
\[
T^{\ve} \ll \f{1}{1 + \n{v_0}{H^s_x}}.
\]

This shows that $\Ga : B_R \to B_R$ is a contraction map for $T$ sufficiently small with respect to the initial data, which proves the local well-posedness statement of Theorem~\ref{th:lwp}.  Further, using the energy estimate $\n{v(t)}{L^2_x} \leq \n{v_0}{L^2_x}$ for any solution $v$ of \eqref{eq:dgbo}, we can patch the uniform-size local time intervals to prove global well-posedness.

To show uniform continuity, let $u_0$ and $v_0$ be mean-zero initial data for \eqref{eq:dgbo} and $u$, $v$ be the respective solutions.  Then by \eqref{eq:embed1},
\[
\n{u-v}{C^0_T H^s_x} \lesssim  \n{u-v}{Z^{b}_T} \leq \n{S(t) u_0 - S(t) v_0}{Z^{b}_T} + \n{\Ga(u)- \Ga (v)}{Z^{b}_T}. 
\]

Then by  \eqref{eq:cont2} and Proposition~\ref{pro:free}, we obtain 
\[
\n{u-v}{Z^{b}_T} \lesssim_b \n{S(t) u_0 -S(t) v_0}{Z^b_T} \lesssim \n{u_0 - v_0}{L^2_x}
\]

This proves the uniform continuity.

\section{Linear stabilization}\label{sec:observ}

In this section we establish the linear stabilization which is needed to prove Theorem~\ref{th:stability}.  Consider the equation
\begin{equation} \label{eq:dg_lin}
\partial_t v + D^\alpha\partial_xv + GD^\beta Gv=0,
	\qquad x\in\bt, t\geq0,
\end{equation}
with $\alpha>0$ and $\be\geq 0$.  Note that since the operator $GD^\be G$ is positive definite in $L^2(\bt)$, the operator $A= -(D^\al \p_x + GDG)$ is a dissipative perturbation of a dispersive operator.  Therefore, $A$ generates a $C^0$~semigroup on $L^2(\bt)$ for any $\al, \be \geq 0$.  We denote the semigroup generated by $A$ to be 
\[
W(t) := e^{-(D^\al \p_x + GD^\be G)t}.
\]

\textit{Remark:} If we want this semigroup to act in $H^s(\bt)$ for $s\neq 0$, then we would need an additional restriction $\be \leq 1$.  See for instance \cite[Claim 1]{MR3335395}.  Since we only work in $L^2(\bt)$ where $GD^\be G$ is positive-definite, we do not impose this additional restriction.

We state the main result of this section:

\begin{proposition} \label{pro:dg_lin_stab}
Let $\alpha> 1$ and $\be\geq 0$.
  Then there exists $\lambda>0$ and $C>0$ such that for any
$v_0 \in L^2_0 (\bt)$, the associated solution $W(t) v_0$ to
\eqref{eq:dg_lin} satisfies
\begin{equation} \label{eq:dg_lin_stab}
\|W(t) v_0\|_{L^2_x(\bt)} \leq Ce^{-\lambda t} \|v_0\|_{L^2_x(\bt)}		\qq t\geq0.
\end{equation}
\end{proposition}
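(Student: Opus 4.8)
The plan is to establish the exponential decay \eqref{eq:dg_lin_stab} via the standard semigroup-plus-observability mechanism. Since $W(t)$ is a $C^0$-semigroup on $L^2_0(\bt)$ generated by the dissipative operator $A=-(D^\al\p_x + GD^\be G)$, and the energy identity gives
\[
\n{W(T)v_0}{L^2}^2 = \n{v_0}{L^2}^2 - \int_0^T \n{D^{\f\be2}GW(t)v_0}{L^2}^2\,dt,
\]
exponential decay is equivalent to an observability inequality: there exist $T>0$ and $C_0>0$ with
\begin{equation}\label{eq:obsplan}
\n{v_0}{L^2}^2 \leq C_0 \int_0^T \n{D^{\f\be2}GW(t)v_0}{L^2}^2\,dt
	\qquad \forall\, v_0\in L^2_0(\bt).
\end{equation}
Indeed, once \eqref{eq:obsplan} holds, combining it with the energy identity yields $\n{W(T)v_0}{L^2}^2 \leq (1-C_0^{-1})\n{v_0}{L^2}^2$, and iterating over the intervals $[nT,(n+1)T]$ using the semigroup property produces the geometric decay $\n{W(nT)v_0}{L^2}\leq \rho^n\n{v_0}{L^2}$ with $\rho<1$, which upgrades to \eqref{eq:dg_lin_stab} after interpolating within each interval. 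So the entire task reduces to proving \eqref{eq:obsplan}.

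The natural route to \eqref{eq:obsplan} is a contradiction/compactness argument (Rosier-type). First I would assume \eqref{eq:obsplan} fails, producing a sequence $(v_0^n)$ with $\n{v_0^n}{L^2}=1$ but $\int_0^T\n{D^{\f\be2}GW(t)v_0^n}{L^2}^2\,dt\to 0$. Using weak compactness of the unit ball and the smoothing/compactness afforded by the dissipative structure, one extracts a weak limit and argues that the limiting solution $v$ satisfies $D^{\f\be2}Gv\equiv 0$ on $[0,T]\times\bt$, hence $Gv\equiv 0$ there, meaning $v$ vanishes on the support of $g$. The key analytic input is then the linear unique continuation property: a solution of the free equation $\p_t v + D^\al\p_x v = 0$ that vanishes on an open subset $(0,T)\times\omega$ of spacetime must vanish identically. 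As the paper's remark after Theorem~\ref{th:stability} indicates, this unique continuation (Proposition~\ref{pro:ucp}) follows from an application of Ingham's inequality to the Fourier series in time, exploiting the asymptotic separation of the eigenvalues $k|k|^\al$ of the dispersive operator for $\al>1$. Once the limit is shown to be zero, one contradicts $\n{\cdot}{L^2}=1$ after verifying the convergence is strong, completing the argument.

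I would organize the proof as: (i) state and invoke the energy identity and the reduction to \eqref{eq:obsplan}; (ii) set up the compactness-contradiction scheme, being careful that the relevant embeddings are compact—here the built-in dissipative smoothing of $Z^b$ (the $L^2_tH^{\be/2}_x$ control from \eqref{eq:embed1}) supplies the needed gain of regularity to pass to a strong limit; (iii) reduce to unique continuation for the limit profile; (iv) invoke the Ingham-based unique continuation to conclude the limit is zero, yielding the contradiction; and (v) run the iteration to obtain \eqref{eq:dg_lin_stab}.

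I expect the \emph{main obstacle} to be step (iii)–(iv): transferring the vanishing of $Gv$ on the support of $g$ into the vanishing of $v$ everywhere. The damping only sees the frequency content of $v$ through the localizing multiplier $g$ (and the mean-subtraction in \eqref{ctrl}), so establishing that $Gv=0$ on $\mathrm{supp}(g)$ genuinely forces $v\equiv 0$ requires the unique continuation property for the fractional dispersive operator $D^\al\p_x$, which is nonlocal and hence more delicate than the KdV or Benjamin-Ono cases. The separation of eigenvalues needed for Ingham's inequality is exactly where the hypothesis $\al>1$ enters, and handling the low-frequency/mean-zero bookkeeping together with the spatial localization of $g$ carefully will be the technical crux.
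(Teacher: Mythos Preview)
Your proposal matches the paper's approach: reduce to the observability inequality via the energy identity, prove observability by a contradiction/compactness argument using the $Z^b$ smoothing of Lemma~\ref{le:dg_lin_smooth} together with Aubin--Lions to extract a limit, and then invoke the Ingham-based unique continuation (Proposition~\ref{pro:ucp}). Two small corrections: $Gv=0$ on $\omega=\{g>0\}$ only yields $v(x,t)=c(t)$ there (not $v=0$), so the paper applies Proposition~\ref{pro:ucp} to $\partial_x v$ and then uses the mean-zero condition; and the eigenvalue gap needed for Ingham already holds for all $\alpha>0$, so the hypothesis $\alpha>1$ is not what drives the unique continuation step.
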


Note that scaling \eqref{eq:dg_lin} by the solution $v$ results in
\begin{equation}\label{eq:dg_lin_energy}
\p_t \int_\bt |v|^2(t) dx = - \int_{\bt} \left|D^{\f{\be}{2}} G v(t)\right|^2\, dx.
\end{equation}

To prove Proposition~\ref{pro:dg_lin_stab}, it would suffice to have the observability inequality:
\begin{equation}\label{eq:observ}
\n{v_0}{L^2(\bt)}^2 \lesssim \int_{\bt} \left|D^{\f{\be}{2}} G v(t)\right|^2\, dx.
\end{equation}

Proof of this observability relies on a unique continuation property \ which will be given in Proposition~\ref{pro:ucp}.  First, we introduce a classical tool that will be used to prove the unique continuation result.  The following is known as a generalized Ingham's lemma.

\begin{proposition} \cite{MR1545625, MR1466919} \label{pro:ingham}
Let $\{\lambda_k\}_{k\in\bz}$ be a sequence of real numbers.
If there exists $\gamma>0$, $\gamma_\infty>0$ and a positive
integer $N$ such that
\begin{align*}
\lambda_{n+1}-\lambda_n \geq \gamma > 0
	\qquad \text{for any $n\in\bz$}, \tn{ and }\\
\lambda_{n+1}-\lambda_n \geq \gamma_\infty > 0
	\qquad \text{whenever $|n|>N$,}
\end{align*}
then the sequence $\{e^{i\lambda_k t}\}$ is a Riesz-Fischer
sequence in $L^2[0,T]$ for any $T>\frac{\pi}{\gamma_\infty}$.  More specifically, there exists a sequence of functions $\{q_j\}_{j\in \bz} \in L^2 [0,T]$ such that $q_j$ is orthogonal to $e^{i\la_k t}$  if and only if $j\neq k$.
\end{proposition}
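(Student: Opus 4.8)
The plan is to reduce the proposition to a single Ingham-type \emph{lower} bound and then read off the biorthogonal family as a byproduct. Recall that the assertion ``$\{e^{i\lambda_k t}\}$ is a Riesz-Fischer sequence in $L^2[0,T]$'' is equivalent to the existence of a constant $A>0$ with
\[
A \sum_k |c_k|^2 \leq \n{\sum_k c_k e^{i\lambda_k t}}{L^2[0,T]}^2
\]
for all finitely supported $\{c_k\}$, which in turn is equivalent to solvability of every moment problem $\langle q, e^{i\lambda_k t}\rangle = c_k$ with data $\{c_k\}\in\ell^2$. Granting this, the family $\{q_j\}$ is produced by solving the moment problem with the unit data $c_k=\delta_{jk}$: the resulting $q_j\in L^2[0,T]$ satisfies $\langle q_j, e^{i\lambda_k t}\rangle = \delta_{jk}$, which is exactly the asserted biorthogonality. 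So the entire statement follows once the displayed lower bound is in hand. (The companion Bessel/upper bound is not needed for the conclusion; it would follow from the uniform gap $\gamma$ alone by a Schur test against a fast-decaying window $w \geq \chi_{[0,T]}$ with $\hat{w}$ integrable.)

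To establish the lower bound I would first handle the high-frequency part. The subfamily $\{e^{i\lambda_k t}\}_{|k|>N}$ is separated by at least $\gamma_\infty$, so classical Ingham theory \cite{MR1545625} applies directly and yields both Ingham inequalities for this subfamily as soon as $T$ exceeds the stated threshold $\pi/\gamma_\infty$; equivalently, it is a Riesz sequence in $L^2[0,T]$. The standard mechanism here is Ingham's weight method: choosing $w$ supported in the time interval, one writes $\int w\,|\sum c_k e^{i\lambda_k t}|^2\,dt = \sum_{j,k} c_j\bar c_k\,\hat{w}(\lambda_j-\lambda_k)$, where the diagonal supplies $\hat{w}(0)\sum|c_k|^2$ and the off-diagonal sum is controlled using the separation $\gamma_\infty$ together with the decay of $\hat{w}$; the threshold on $T$ is precisely what forces the diagonal to dominate strictly.

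It then remains to reinsert the finitely many low-frequency exponentials $\{e^{i\lambda_k t}\}_{|k|\leq N}$, and I expect this to be the main obstacle: these are only guaranteed the smaller gap $\gamma$, so the Schur-type estimate underlying the weight method can fail for them, and they must be absorbed by a perturbation argument rather than by separation. The key point is that the uniform gap $\gamma>0$ makes all the $\lambda_k$ distinct, so adjoining these finitely many exponentials to the Riesz sequence of the previous step keeps the full family minimal; and adjoining finitely many linearly independent vectors to a Riesz sequence, while preserving minimality, preserves the Riesz-sequence property, because it perturbs the associated Gram operator by a finite-rank (hence compact) operator whose invertibility is retained precisely by the surviving minimality. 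This is the substance of the generalized Ingham theorem of \cite{MR1466919}. Verifying minimality of the full family and checking that this finite-rank perturbation does not destroy the lower frame bound is where the care lies; a more self-contained alternative would replace the perturbation step by an $N$-dependent compactness (normal-families) argument of Haraux type, deducing the lower bound for the full family from that of a shifted family $\{e^{i(\lambda_k-\lambda_{k_0})t}\}$ by contradiction. In either case, once the lower bound holds for the full family the Riesz-Fischer property and the biorthogonal system $\{q_j\}$ follow exactly as in the first paragraph.
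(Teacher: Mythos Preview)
The paper does not prove this proposition; it is quoted as a known result from the cited references \cite{MR1545625, MR1466919} and used as a black box in the proof of Proposition~\ref{pro:ucp}. There is therefore no in-paper argument to compare your proposal against.

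That said, your sketch is a faithful outline of how the generalized Ingham inequality is typically established: the reduction of the Riesz--Fischer property to the lower frame bound and then to moment-problem solvability is standard, and the two-step strategy (classical Ingham for the uniformly $\gamma_\infty$-separated tail, then adjunction of the finitely many remaining frequencies by a finite-rank/compactness perturbation preserving minimality) is exactly the mechanism in the Haraux-type refinements such as \cite{MR1466919}. The only places where a reader would want more detail are the ones you already flag: the verification that adjoining the $2N+1$ low-frequency exponentials preserves minimality (here the uniform gap $\gamma>0$ ensures all $\lambda_k$ are distinct, and exponentials with distinct frequencies are linearly independent on any interval of positive length), and the passage from minimality plus finite-rank perturbation back to a strict lower bound. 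Your alternative contradiction/normal-families route is also viable and is closer to Haraux's original argument.
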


The following unique continuation principle leads to the linear stabilization results for  \eqref{eq:dg_lin}.
\begin{proposition} \label{pro:ucp}
Let $\al>0$.  If, for some $T>0$ and $a<b$, $v\in C^0_t L^2_{x}$ satisfy
\[
\p_t v + D^\al \p_x v = 0,\qquad  v = 0 \textnormal{ a.e. in } [0,T]\times [a,b].
\]
Then $v\equiv 0$ a.e. on $[0,T]\times \bt$.
\end{proposition}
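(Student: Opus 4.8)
The plan is to exploit the fact that the equation $\p_t v + D^\al \p_x v = 0$ decouples completely in Fourier space: writing $v(t,x) = \sum_{k\in\bz} v_k(t) e^{ikx}$, each coefficient solves $\dot v_k = -ik|k|^\al v_k$, so $v_k(t) = e^{-i\la_k t}(v_0)_k$ with $\la_k := k|k|^\al$. Thus the solution is a superposition $v(t,x) = \sum_k (v_0)_k e^{i(kx - \la_k t)}$, and the hypothesis that $v$ vanishes on $[0,T]\times[a,b]$ becomes a statement that a certain non-harmonic Fourier series in $t$, with spatially-modulated coefficients, vanishes identically on a time interval. The goal is to conclude that every $(v_0)_k = 0$.

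The key step is to apply the generalized Ingham lemma (Proposition~\ref{pro:ingham}) to the frequencies $\{\la_k\}_{k\in\bz}$. First I would verify the gap hypotheses: since $\la_k = k|k|^\al$ with $\al>0$, for consecutive integers the gap $\la_{k+1}-\la_k$ is bounded below by a fixed $\ga>0$ (the minimal gap occurs near $k=0$ and is finite and positive because $\al>0$ makes the map $k\mapsto k|k|^\al$ strictly increasing on $\bz$), and moreover $\la_{k+1}-\la_k \to \infty$ as $|k|\to\infty$, so for large $|k|$ the gap exceeds any prescribed $\ga_\infty$. Hence for the given $T>0$ we may, after possibly shrinking to a subinterval, arrange $T > \pi/\ga_\infty$ and obtain from Proposition~\ref{pro:ingham} a biorthogonal (Riesz–Fischer) family $\{q_j\}\subset L^2[0,T]$ dual to $\{e^{-i\la_k t}\}$. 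Fixing $x\in[a,b]$, the function $t\mapsto v(t,x)=\sum_k (v_0)_k e^{ikx} e^{-i\la_k t}$ lies in $L^2[0,T]$ and vanishes on $[0,T]$; pairing against $q_j$ and using biorthogonality extracts $(v_0)_j e^{ijx} = 0$ for every $j$. Since this holds for a full interval of $x$-values (indeed for a.e.\ $x\in[a,b]$, an interval of positive length), and $e^{ijx}\neq 0$, we conclude $(v_0)_j = 0$ for all $j$, whence $v\equiv 0$.

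There are two points requiring care. The measure-theoretic one is that $v$ vanishes only \emph{a.e.}\ on $[0,T]\times[a,b]$, so the pairing-against-$q_j$ argument must be justified for a.e.\ fixed $x$ via Fubini, and the interchange of the $t$-integration with the $k$-summation defining $v$ needs the $L^2$-convergence of the series (guaranteed by $v_0\in L^2_0$ and Plancherel). The more substantive obstacle is the quantitative application of Ingham's lemma: Proposition~\ref{pro:ingham} delivers the Riesz–Fischer property only for $T>\pi/\ga_\infty$, whereas here $T>0$ is arbitrary and possibly small. I expect this to be the main difficulty, and the natural resolution is the high-frequency structure of $\{\la_k\}$: because $\la_{k+1}-\la_k\to\infty$, one can take $\ga_\infty$ as large as desired by excluding finitely many low frequencies $|k|\le N$, making $\pi/\ga_\infty$ smaller than any given $T$. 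This recovers biorthogonality for all but finitely many $k$ on $[0,T]$; the remaining finitely many low-frequency coefficients are then pinned down by an auxiliary finite-dimensional argument (the finite exponential family $\{e^{-i\la_k t}\}_{|k|\le N}$ is automatically linearly independent on any interval of positive length), so the conclusion $(v_0)_k=0$ extends to those $k$ as well. Once all Fourier coefficients vanish, $v\equiv 0$ a.e.\ on $[0,T]\times\bt$ follows immediately.
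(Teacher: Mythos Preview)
Your proposal is correct and follows essentially the same route as the paper: write the solution as a non-harmonic Fourier series in $t$, invoke the generalized Ingham lemma to produce a biorthogonal family $\{q_j\}$, and pair $v(\cdot,x)$ against $q_j$ to isolate $(v_0)_j e^{ijx}$, which must vanish for a.e.\ $x\in[a,b]$.

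One small simplification: your final paragraph worries that Ingham only delivers biorthogonality for high frequencies $|k|>N$, necessitating a separate finite-dimensional argument for the low modes. This is unnecessary. The generalized Ingham lemma as stated in Proposition~\ref{pro:ingham} already accommodates finitely many small gaps: provided the gaps satisfy $\la_{k+1}-\la_k\ge\ga>0$ everywhere and $\ge\ga_\infty$ for $|k|>N$, the \emph{entire} family $\{e^{i\la_k t}\}_{k\in\bz}$ is Riesz--Fischer on $[0,T]$ whenever $T>\pi/\ga_\infty$. Since $\la_{k+1}-\la_k\to\infty$, you may take $\ga_\infty$ as large as you like (at the cost of enlarging $N$), so the biorthogonal family exists on any prescribed $[0,T]$ and extracts every coefficient directly---exactly as the paper does.
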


\textit{Remark:} Above Proposition also works when $\al = 0$ using the same technique, but with an additional restriction that $T$ must be sufficiently large.  This is natural considering that $\al=0$ gives finite speed of propagation where $\al>0$ corresponds to infinite speed of propagation.

\begin{proof}
Let $v(0,x)=: f(x)\in L^2_x(\bt)$.  Then we can write 
\[
v(t,x) = \sum_{k\in \bz} e^{it k|k|^\al + i kx} f_k \qquad \textnormal{ where } f_k = \f{1}{2\pi}\int_\bt f(x)e^{-ikx}\,dx.
\]

Note that the sequence $\{\la_k\} = \{k |k|^\al\}$ satisfies the conditions of Proposition~\ref{pro:ingham} with $\ga_{\infty}$ sufficiently large as long as $\al>0$.  This means that we can use this statement with $T>0$ as small as desired.  Using Ingham's Lemma, select a sequence $\{q_j\}_{j\in \bz}$ which is biorthogonal to $\{e^{it j |j|^\al}\}_{j\in \bz}$.  Then, for each $j\in \bz$,
\[
\lan{v(\cdot,x), q_j}_{L^2_T} = \int_0^t  \sum_{k\in \bz} e^{it k|k|^\al + i kx} f_k\, q_j(t) \, dt = e^{ijx} f_j.
\]
But notice that for almost every $x\in [a,b]$, the LHS of above is equal to zero.  That implies that for at least one $x_0\in [a,b]$ (which may depend on $j$), $e^{ijx_0} f_j = 0$.  This implies $f_j=0$ for each $j\in \bz$.  This implies $v\equiv 0$ for a.e. $(t,x)\in [0,T]\times [a,b]$.\\
\end{proof}

Given this unique continuation property, observability \eqref{eq:observ} can be proved via compactness arguments similar to \cite{MR3335395}.  Proof in this case is a little different because we do not have the smoothing statement as given in \cite[Proposition 2.16]{MR3335395}, but we can overcome this via the smoothing given by the following lemma. 

\begin{lemma} \label{le:dg_lin_smooth}
For any $f\in L^2_0$ and $T>0$,
\[
\n{W(t) f}{Z^b_T} \lesssim_{b,T} \n{f}{L^2(\bt)}
\]
\end{lemma}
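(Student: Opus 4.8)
The plan is to bound the semigroup solution $W(t)f$ in $Z^b_T$ by comparing it to the free dispersive-dissipative evolution $S(t)f$ through a Duhamel expansion, then absorbing the nonlinear-linear corrections using the estimates already established in Sections~\ref{sec:bourgain} and \ref{sec:lin}. Recall that $W(t)$ solves the full linear equation \eqref{eq:dg_lin}, while $S(t)$ is the semigroup with symbol $e^{-iL_k t - c_k|t|}$ that isolates only the diagonal dissipation $\wt{D^\be}$. Using the decomposition $GD^\be G = \wt{D^\be} + \cn_1 + \crr$ from Section~\ref{sec:prelim}, the solution $v(t) = W(t)f$ satisfies the Duhamel formula \eqref{eq:duhamel} with no quadratic term present, namely
\[
v_k(t) = e^{-iL_k t - c_k|t|} f_k - \int_0^t e^{-iL_k(t-s) - c_k|t-s|}\pr{\cn_1[v]_k(s) + \crr[v]_k(s)}\,ds.
\]
First I would pass to a fixed-point/iteration formulation on $[0,T]$: I claim $v$ is the unique solution in $Z^b_T$ (for a suitable $b>\f12$ satisfying the hypotheses of Lemmas~\ref{le:gdg} and \ref{le:easy}) of this integral equation, which can be established by the same contraction argument as in Section~\ref{sec:th1}, now \emph{linear} in $v$ and hence even simpler.

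The key steps, in order, are as follows. I would apply Proposition~\ref{pro:free} to bound the free-evolution term by $\n{f}{L^2_x}$ (valid since $b<\f32$). For the Duhamel integral, I would apply Proposition~\ref{pro:duhamel} to gain the dissipative smoothing of order $\be/2$, reducing the bound to $\n{D^{-\be(b-\f12)}(\cn_1[v] + \crr[v])}{Z^{b-1}_T}$. Then Lemma~\ref{le:gdg} controls the $\cn_1$ contribution by $T^\ve \n{v}{Z^b_T}$, and Lemma~\ref{le:easy} controls the $\crr$ contribution by $T^{\f32 - b - \ve}\n{v}{Z^b_T}$; both gain a positive power of $T$. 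Combining, for $T$ small enough the self-referential terms can be absorbed into the left side, yielding
\[
\n{v}{Z^b_T} \lesssim_b \n{f}{L^2_x} + C\,T^\ve \n{v}{Z^b_T},
\]
so that $\n{v}{Z^b_T} \lesssim_b \n{f}{L^2_x}$ on a short interval whose length depends only on $b$ (not on $f$, since the problem is linear). Because the bound is linear and the $L^2$ norm is non-increasing along the flow by \eqref{eq:dg_lin_energy}, I would then iterate over consecutive subintervals of this fixed length, concatenating to cover $[0,T]$ for arbitrary $T$, which produces the stated constant depending on $b$ and $T$.

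The main obstacle I anticipate is the smallness-of-$T$ restriction inherent in Lemmas~\ref{le:gdg} and \ref{le:easy}: those estimates only close the contraction and the a priori bound for $0<T\ll1$. For a general $T>0$ this is resolved by the iteration just described, but one must verify that the per-step constant is uniform across subintervals. This uniformity follows because each estimate depends on $T$ only through an explicit positive power and because the $L^2$ norm—which controls the size of the data fed into each successive subinterval via Proposition~\ref{pro:free}—cannot grow, thanks to the energy identity \eqref{eq:dg_lin_energy}. A secondary subtlety is the compatibility of the admissible range of $b$: I must choose a single $b \in (\f12, \f{\al}{\al+\be})$ lying simultaneously in the ranges required by Propositions~\ref{pro:free}, \ref{pro:duhamel} and Lemmas~\ref{le:gdg}, \ref{le:easy}, which is possible precisely because $\al > \be$ guarantees $\f{\al}{\al+\be} > \f12$.
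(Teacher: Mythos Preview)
Your proposal is correct and follows essentially the same route as the paper: write $W(t)f$ via the Duhamel formula relative to $S(t)$, apply Proposition~\ref{pro:free} to the free piece and Proposition~\ref{pro:duhamel} together with Lemmas~\ref{le:gdg} and \ref{le:easy} to the integral, absorb the $T^\ve$ correction for $T_0$ small (an absolute constant, since the problem is linear), and iterate to reach arbitrary $T$. Your remarks on the admissible range of $b$ and on using the energy identity to control the data at successive steps are accurate and make explicit points the paper leaves implicit.
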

\begin{proof}
As in the nonlinear equation, we can see that $W(t)f$ satisfies
\[
W(t)f = S(t)f - \int_0^t S(t-s) \left(\cn_1[W(\cdot)f](s)+ \crr[W(\cdot)f](s)\right)\, ds.
\]
Using Lemma~\ref{le:gdg}, Lemma~\ref{le:easy} and Proposition~\ref{pro:free} for some $0<T_0\ll 1$, 
\[
\n{W(t)f}{Z^b_{T_0}} \lesssim_{T_0} \n{S(t)f}{Z^b_{T_0}} \le_{b} \n{f}{L^2(\bt)}.
\]
Note that this time $T_0$ does not depend on size of the initial data but is an absolute constant. Iterating the time $T_0$ gives the claim for an arbitrary $T>0$.
\end{proof}

In the following, we prove the linear observability.

\begin{proof}[Proof of observability \eqref{eq:observ}]
We prove observability via contradiction. Assume to the contrary that  there exists
a sequence of non-zero functions
$\{v_0^n\}_{n\in\bz^+} \subset L^2_0(\bt)$ and corresponding solutions $\{v^n\}$ of \eqref{eq:dg_lin} such that, after
rescaling,
\begin{equation} \label{eq:stab_obs}
1	= \|v_0^n\|^2
	> n\int_0^T\|D^{\beta/2}(Gv^n)\|^2 \, d\tau,
\end{equation}
with $v^n = W(t)v^n_0$ denoting solutions to \eqref{eq:dg_lin}
corresponding to initial data $v_0^n$.  The smoothing effect of $Z^b$ and Lemma~\ref{le:dg_lin_smooth} will demonstrate that the limit of the $v^n$ exists and satisfies the hypothesis of Proposition \ref{pro:ucp}.

Setting $\gamma^*=\beta/2-(1+\alpha)$, observe
\begin{equation*}
\|D^\alpha\partial_xv^n\|_{L^2(0,T;H^{\gamma^*}(\bt))}
	\leq \|v^n\|_{L^2(0,T;H^{\beta/2}(\bt))}.
\end{equation*}
By \eqref{eq:embed1} and Lemma~\ref{le:dg_lin_smooth}
\[
\|v^n\|_{L^2(0,T;H^{\beta/2}(\bt))} = \|W(t) v_0^n\|_{L^2(0,T;H^{\beta/2}(\bt))}\lesssim \n{W(t) v_0^n}{Z^b_T} \lesssim_{b,T} \n{v_0^n}{L^2_x} = 1.
\]
So these are uniformly bounded.  Using commutator estimate, Sobolev
embedding and the fact that $G$ is bounded on $H_0^s(\bt)$,
\begin{align*}
\|D^{\gamma^*}GD^\beta Gv^n\|
	&\leq \|GD^{\gamma^*+\beta} Gv^n\|
		+ \|[D^{\gamma^*},G]D^\beta Gv^n\| \\
	&\lesssim \left(\|Gv^n\|_{{\gamma^*}+\beta}+\|Gv^n\|_{{\gamma^*}+\beta-1}\right) \\
	&\lesssim \|Gv^n\|_{\beta/2}.
\end{align*}
Combining this with the energy identity \eqref{eq:dg_lin_energy}
applied to $v^n$, we find
\begin{equation*}
\|GD^\beta Gv^n\|_{L^2(0,T;H^{\gamma^*}(\bt))}^2
	\lesssim \int_0^T\|D^{\beta/2}(Gv^n)\|^2 \, dt
	\lesssim \|v_0^n\|^2,
\end{equation*}
which is uniformly bounded.
Using the equation implies
\begin{equation*}
\|\partial_tv^n\|_{L^2(0,T;H^{\gamma^*}(\bt))}
	\leq \|D^\alpha\partial_xv^n + GD^\beta Gv^n\|_{L^2(0,T;H^{\gamma^*}(\bt))}
	\leq C
\end{equation*}
for some $C$ indepdent of $n$.
Additionally, recall from above
that the sequence $\{v^n\}_{n\in\bz^+}$ is uniformly bounded in
$L^2(0,T;H^{\beta/2}(\bt))$.
%%% extract subsequence converging in L^2(0,T;H_0^0(\bt))
Applying the Banach-Alaoglu theorem and the Aubin-Lions lemma,
we extract a subsequence with the following properties:
$$\begin{array}{llc}
v^n\rightarrow v &  \textrm{in }
	L^2(0,T; H^{\gamma}(\bt)) & \forall \gamma<\beta/2   \\
v^n\rightarrow v &  \textrm{in }
	L^2(0,T; H^{\beta/2}(\bt)) \textrm{ weak} &   \\
v^n\rightarrow v &  \textrm{in }
	L^{\infty}(0,T; L^2(\bt)) \textrm{ weak}\ast &   
\end{array}$$
for some
$v\in L^2(0,T;H_0^{\gamma}(\bt))
	\cap L^{\infty}(0,T;L^2(\bt))$.
Taking $\gamma=0$ implies
%\begin{equation*}
%\int_\bt v^n(x,\cdot) \, dx
%	\rightarrow \int_\bt v(x,\cdot) \, dx
%	= 0
%\end{equation*}
%and
\begin{equation*}
(v^n)^2 \rightarrow v^2\qquad \textrm{in }L^1(\bt\times(0,T)).
\end{equation*}

%%% convergence in L^2(0,T;H_0^0(\bt)) implies v_0^n Cauchy
Next, we verify that $\{v_0^n\}_{n\in\bz}$ is Cauchy in
$L^2_0(\bt)$ as a consequence of the choice $\gamma=0$ above.
Scaling the equation \eqref{eq:dg_lin} by $(T-t)v$ yields
\begin{equation*}
\frac{T}{2}\|v_0\|^2
	+ \frac12 \int_0^T \|v(t)\|^2 \, dt
	+ \int_0^T (T-t) \|D^{\beta/2}(Gv)\|^2 \, dt = 0.
\end{equation*}
Applying this to the difference of two solutions produces
\begin{align*}
\|v_0^n-v_0^m\|
	&\leq \frac1T \int_0^T \|v^n-v^m\|^2 \, dt
				+ 2 \int_0^T \|D^{\beta/2}G(v^n-v^m)\|^2 \, dt \\
	&\leq \frac1T \int_0^T \|v^n-v^m\|^2 \, dt
				+ 4\left(\frac1n+\frac1m\right)
\end{align*}
after using \eqref{eq:stab_obs}.
Thus $v_0^n$ converges strongly to some $v_0$ in $L^2_0(\bt)$ and
it follows that the solution of \eqref{eq:dg_lin}
associated to $v_0$ agrees with the limit $v$ of the
sequence $\{v^n\}_{n\in\bz}$.
Thus $v \in C([0,T];L^2_0(\bt))$ and $v_0=v(0)$.

%%% setup UCP
Letting $n\rightarrow \infty$ in \eqref{eq:stab_obs} we find that
\begin{equation*}
\int_0^T \|D^{\beta/2}(Gv)\|^2 \, dt = 0.
\end{equation*}
Hence $Gv=0$ a.e. $\bt\times(0,T)$ and using \eqref{ctrl} we may write
\begin{equation*}
v(x,t) = \int_\bt g(y) v(y,t) \, dy := c(t)
	\qquad \text{for all $(x,t)\in\omega\times(0,T),$}
\end{equation*}
where $\omega=\{x\in\bt:g(x)>0\}$ and $c \in L^\infty(0,T)$.
Thus $\partial_xv$ satisfies the hypothesis of
Proposition \ref{pro:ucp} implying that $\partial_xv\equiv v\equiv0$
(since $v$ has mean value zero).
This leads to a contradiction with the fact that $\|v(0)\|=\|v_0^n\|=1$.  
\end{proof}

\section{Proof of Theorem~\ref{th:stability}}\label{sec:th2}
Using semigroup $W(t)$, the solution $v \in C^0_t L^2_x$ of  \eqref{eq:dgbo} satisfies
\begin{equation} \label{eq:semigroup}
v(t) = W(t) v_0 + \int_0^t W(t-s) \p_x (v^2)(s) \, ds.
\end{equation}

Given a bilinear estimate in Lemma~\ref{le:nonl} and semigroup estimate for $W(t)$ in Lemma~\ref{le:dg_lin_smooth}, we can extend the bilinear estimate to the semigroup.  Together with linear stabilization given in Proposition~\ref{pro:dg_lin_stab}, this leads directly to the proof of Theorem~\ref{th:stability}.  In the following lemma, we prove the following extension of the bilinear estimate, following the proof scheme given in \cite[Lemma 4.4]{MR2753618}. 

\begin{lemma} \label{le:lrz}
Let $b>\f{1}{2}$ and $v\in Z^b$ be the solution of \eqref{eq:dgbo}.  Given any $T>0$, 
\[
\n{ \int_0^t W(t-s) \p_x (v^2)(s) \, ds}{Z^{b}_T} \lesssim_T \n{v}{Z^{b}_T}^2.
\]
\end{lemma}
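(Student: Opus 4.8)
The plan is to reduce the estimate for the full dissipative semigroup $W(t)$ to the diagonal semigroup $S(t)$, for which the Duhamel smoothing bound (Proposition~\ref{pro:duhamel}) together with the linear and bilinear machinery of Lemmas~\ref{le:gdg}, \ref{le:easy} and \ref{le:nonl} are already in hand, and then to iterate over short time intervals to reach arbitrary $T$. First I would set $u(t) := \int_0^t W(t-s)\p_x(v^2)(s)\,ds$ and observe that $u$ solves the inhomogeneous linear problem $\p_t u + D^\al\p_x u + GD^\be G u = \p_x(v^2)$ with $u(0)=0$. Using the decomposition $GD^\be G = \wt{D^{\be}} + \cn_1 + \crr$ from Section~\ref{sec:prelim}, this becomes
\[
\p_t u + D^\al\p_x u + \wt{D^{\be}} u = \p_x(v^2) - \cn_1[u] - \crr[u], \qquad u(0)=0.
\]
Since on the range of integration $0\le s\le t$ the propagator of $D^\al\p_x+\wt{D^{\be}}$ coincides with $S(t-s)$ (the absolute value inserted in \eqref{eq:duhamel} being inactive there), variation of parameters yields the exact representation
\[
u(t) = \int_0^t S(t-s)\pr{\p_x(v^2)(s) - \cn_1[u](s) - \crr[u](s)}\,ds.
\]

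Next, on a short interval $[0,T_0]$ with $T_0$ an absolute constant to be fixed, I would take the $Z^b_{T_0}$ norm of this identity and apply Proposition~\ref{pro:duhamel} to each of the three Duhamel integrals, feeding the source terms into Lemma~\ref{le:nonl}, Lemma~\ref{le:gdg} and Lemma~\ref{le:easy} respectively. Choosing $b>\f{1}{2}$ as produced by Lemma~\ref{le:nonl} (which is simultaneously compatible with the ranges required by Lemmas~\ref{le:gdg}, \ref{le:easy} and Proposition~\ref{pro:duhamel}), this produces
\[
\n{u}{Z^b_{T_0}} \le C T_0^\ve \n{v}{Z^b_{T_0}}^2 + C\pr{T_0^\ve + T_0^{\f{3}{2}-b-\ve}} \n{u}{Z^b_{T_0}},
\]
where the exponent $\f{3}{2}-b-\ve$ is positive for $b$ close to $\f{1}{2}$. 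Choosing $T_0$ small enough that $C\pr{T_0^\ve + T_0^{\f{3}{2}-b-\ve}}\le \f{1}{2}$ lets me absorb the last term into the left-hand side, giving the short-time estimate $\n{u}{Z^b_{T_0}} \lesssim \n{v}{Z^b_{T_0}}^2 \le \n{v}{Z^b_T}^2$.

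Finally I would extend to arbitrary $T$ by iterating exactly as in Lemma~\ref{le:dg_lin_smooth}. On each subinterval $[jT_0,(j+1)T_0]$ the semigroup property lets me write $u$ as $W(t-jT_0)u(jT_0)$ plus a Duhamel integral starting at $jT_0$: the homogeneous piece is controlled by $\n{u(jT_0)}{L^2}$ through Lemma~\ref{le:dg_lin_smooth}, while the inhomogeneous piece is controlled by $\n{v}{Z^b_T}^2$ by the short-time estimate applied on the shifted interval. The embedding $Z^b\hookrightarrow C^0_t L^2_x$ from \eqref{eq:embed1} then propagates the control of $\n{u(jT_0)}{L^2}$ forward across the $\lceil T/T_0\rceil$ subintervals, and assembling these contributions produces the $T$-dependent constant claimed.

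I expect the main obstacle to be the iteration/patching step rather than the short-time estimate: because the $Z^b$ norm is nonlocal in time, assembling the subinterval bounds into a single $Z^b_T$ bound requires careful bookkeeping of the restart terms $W(t-jT_0)u(jT_0)$ and control of how their $L^2$ sizes accumulate with $j$. A secondary technical point is justifying the a priori finiteness of $\n{u}{Z^b_{T_0}}$ before carrying out the absorption, which follows from the boundedness of each source term combined with Proposition~\ref{pro:duhamel} (or, if preferred, by first running the argument on smooth approximations and passing to the limit).
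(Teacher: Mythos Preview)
Your proposal is correct and follows essentially the same approach as the paper: both arrive at the identity $u(t)=\int_0^t S(t-s)\bigl(\p_x(v^2)-\cn_1[u]-\crr[u]\bigr)(s)\,ds$ for $u(t)=\int_0^t W(t-s)\p_x(v^2)(s)\,ds$, absorb the $\cn_1$ and $\crr$ contributions on a short interval $T_0$ via Lemmas~\ref{le:gdg}--\ref{le:nonl}, and then iterate. The only cosmetic difference is that the paper obtains this identity by algebraically combining the two Duhamel formulas \eqref{eq:duhamel} and \eqref{eq:semigroup} for $v$, whereas you derive it more directly by writing the inhomogeneous linear equation satisfied by $u$; your treatment of the iteration step is also more explicit than the paper's one-line reference to Lemma~\ref{le:dg_lin_smooth}.
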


\begin{proof}
Here, we justify the second inequality more carefully.  Combining  \eqref{eq:duhamel} with \eqref{eq:semigroup}, we can write
\begin{align*}
v(t) &= S(t) v_0 + \int_0^t S(t-s) \cn_1[W(t) v_0](s)\, ds \\
+& \int_0^t S(t-s) \cn_1\left[\int_0^s W(s-s') \p_x (v^2)(s') \, ds'\right](s)\, ds + \int_0^t S(t-s) \p_x (v^2) (s) \, ds.
\end{align*}
Here, we have intentionally omitted the third term involving $\crr$ to simplify the resulting expression, but it will be evident from the argument that $\crr$ does not impose any additional difficulty.  Replacing $v(t)$ on the LHS above by \eqref{eq:semigroup}, 
\begin{align*}
\int_0^t &W(t-s) \p_x (v^2)(s) \, ds = S(t) v_0 - W(t) v_0 + \int_0^t S(t-s) \cn_1[W(t) v_0](s)\, ds \\
+& \int_0^t S(t-s) \cn_1\left[\int_0^s W(s-s') \p_x (v^2)(s') \, ds'\right](s)\, ds + \int_0^t S(t-s) \p_x (v^2) (s) \, ds.
\end{align*}

Noting
$\ds W(t) v_0 = S(t) v_0  + \int_0^t S(t-s) \cn_1[W(t) v_0](s)\, ds$, above simplifies to
\begin{align*}
\int_0^t &W(t-s) \p_x (v^2)(s) \, ds \\
&= \int_0^t S(t-s) \cn_1\left[\int_0^s W(s-s') \p_x (v^2)(s') \, ds'\right](s)\, ds + \int_0^t S(t-s) \p_x (v^2) (s) \, ds.
\end{align*}
Taking $Z^b_{T_0}$ norm of both sides for some $0<T_0 \ll 1$ and using Lemma~\ref{le:gdg} and Lemma~\ref{le:nonl}, we obtain
\[
\n{\int_0^t W(t-s) \p_x (v^2)(s) \, ds}{Z^b_{T_0}}  \lesssim_{T_0} \n{\int_0^t S(t-s) \p_x (v^2) (s) \, ds}{Z^b_{T_0}} \lesssim_{b} \n{v}{Z^{b}_{T_0}}^2.
\]
Since $T_0$ is again an absolute constant as in the proof of Lemma~\ref{le:dg_lin_smooth}, we can iterate this to any arbitrary $T>0$ to obtain the desired statement.
\end{proof}

We have acquired all tools needed to prove Theorem~\ref{th:stability}, so we conclude with the following proof of local nonlinear stabilization of \eqref{eq:dgbo}.

\begin{proof}[Proof of Theorem~\ref{th:stability}]
Using Proposition~\ref{pro:dg_lin_stab}, we can fix a $T\gg 1$ and $0<\la' \ll \la$ such that 
\[
\n{W(T) v_0}{L^2_x(\bt)} \leq \f{1}{2} e^{-\la' T} \n{v_0}{L^2_x(\bt)} \quad \tn{ for any } v_0 \in L^2_0(\bt).
\]

Standard contraction arguments give a contraction map of \eqref{eq:semigroup} on a sufficiently small closed ball $B_{M}$ in $Z^b_T$ centered around the linear solution $W(t)v_0$.  Furthermore, as long as $M \ll \n{W(t)v_0}{Z^b_T}$, we can have that $\n{v}{Z^b_T} \sim  \n{W(t)v_0}{Z^b_T} \lesssim_T \n{v_0}{L^2_x}$.   Then, using \eqref{eq:semigroup}, we obtain
\begin{align*}
\n{v(T)}{L^2_x} &\leq \n{W(T) v_0}{L^2_x} + C_{b,T} \n{\int_0^T W(T-s) \p_x(v^2)(s) \, ds}{Z^b_T}\\
	&\leq \f{1}{2} e^{-\la' T} \n{v_0}{L^2_x} + C_{b,T} \n{v}{Z^b_T}^2 \leq \f{1}{2} e^{-\la' T} \n{v_0}{L^2_x} + C_{b,T} \n{v_0}{L^2_x}^2.
\end{align*}
Thus, if we choose $\de>0$ such that $C_{b,T} \de < \f{1}{2} e^{-\la' T}$, we have
\[
\n{v(T)}{L^2_x} \leq e^{-\la' T}\n{v_0}{L^2_x}
\]
for $v_0 \in L^2_0$ satisfying $\n{v_0}{L^2_x} < \de$.  This proves Theorem~\ref{th:stability}.
\end{proof}

\appendix
\section{Theory of $A_p$~weights}\label{appendix}
We will briefly introduce $A_p$~weights, which will be used to simplify the proofs of Propositions~\ref{pro:cont} and \ref{pro:duhamel}.  Our aim is to show that $\lan{\f{\tau-L_k}{\lan{k}^\be}}^{\al} \in A_2$ for a certain range of $\al$ and then incur widely-known properties $A_2$~weights to aid with technical estimates.  In particular, we will use that, if $W$ is an $A_2$~weight, then Maximal functions and Hilbert transform are bounded in $L^2(W)$.

We begin with a few basic definitions.  Given a positive weight, we introduce the weighted norm $L^p(W)$ as follows:
\[
\n{v}{L^p_\tau (W)} := \pr{ \int_\br |v|^p(\tau) W(\tau)\, d\tau}^{\f{1}{p}}.
\]

A non-negative function $W=W(\tau)$ belongs to $A_p$ class if and only if
\begin{equation}\label{eq:a2}
\sup_{I} \pr{\f{1}{|I|} \int_I W(\tau) \, d\tau} \pr{\f{1}{|I|} \int_I W^{-\f{1}{p-1}}(\tau)\, d\tau}^{p-1} <\infty
\end{equation}
where $I$ is an arbitrary connected interval on $\br$.  For $W\in A_2$, the LHS of \eqref{eq:a2} is denoted $[W]_{A_p}$.  We are only interested in the case $p=2$.  The following lemma gives convenient properties of this quantity:
\begin{lemma} \cite[Proposition 9.1.5]{MR2463316}
\label{le:graf} 
\[
[W(\la\, \cdot)]_{A_p} = [W(\cdot)]_{A_p} \tn{ for } \la>0, \qquad [W(\cdot - z)]_{A_p} = [W(\cdot)]_{A_p} \tn{ for } z \in \br.
\] 
In addition, for $p=2$,  $[W^{-1}]_{A_2} = [W]_{A_2}$.
\end{lemma}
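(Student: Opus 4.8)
The plan is to derive all three identities directly from the definition \eqref{eq:a2}, since each one reflects a symmetry of the defining expression
\[
[W]_{A_p} = \sup_I \pr{\f{1}{|I|}\int_I W}\pr{\f{1}{|I|}\int_I W^{-\f{1}{p-1}}}^{p-1}
\]
under the relevant operation. The single recurring observation is that the supremum ranges over \emph{all} bounded intervals $I\subset\br$, and that the maps $I\mapsto \la I$ (for $\la>0$) and $I\mapsto I-z$ (for $z\in\br$) are bijections of this family; consequently it suffices to check that the bracketed product transforms correctly interval-by-interval.

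For the dilation identity I would fix $\la>0$, write $W_\la(\tau)=W(\la\tau)$, and substitute $\si=\la\tau$ in each averaged integral over a given interval $I$. Since $d\tau=\la^{-1}\,d\si$ and $|\la I|=\la|I|$, the factor $\la^{-1}$ cancels against the change in the normalizing length, giving $\f{1}{|I|}\int_I W_\la = \f{1}{|\la I|}\int_{\la I} W$, and likewise for the factor involving $W^{-1/(p-1)}$. Hence the full $A_p$ product for $W_\la$ over $I$ equals the product for $W$ over $\la I$; taking the supremum and using that $I\mapsto \la I$ exhausts all intervals yields $[W_\la]_{A_p}=[W]_{A_p}$. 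The translation identity is identical with the substitution $\si=\tau-z$, using $|I-z|=|I|$ together with the fact that $I\mapsto I-z$ is again a bijection of the family of intervals.

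For the reflection identity at $p=2$ no change of variables is needed: here the exponent $-\f{1}{p-1}$ equals $-1$, so the defining product over $I$ is $\pr{\f{1}{|I|}\int_I W}\pr{\f{1}{|I|}\int_I W^{-1}}$, which is manifestly symmetric under $W\leftrightarrow W^{-1}$ because $(W^{-1})^{-1}=W$ and multiplication commutes. Taking the supremum over $I$ then gives $[W^{-1}]_{A_2}=[W]_{A_2}$ at once. There is no genuine obstacle in any of these arguments; the only point requiring care is the bookkeeping of the normalizing factors $1/|I|$ against the Jacobian of the affine substitution, together with the elementary but essential remark that dilations and translations permute the collection of intervals, so that the supremum is preserved \emph{exactly} rather than merely bounded above.
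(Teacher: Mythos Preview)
Your argument is correct and is exactly the standard one: each identity follows from the definition \eqref{eq:a2} together with the observation that dilations and translations permute the family of intervals (with the Jacobian cancelling the change in $|I|$), while the $p=2$ case is immediate from the symmetry of the product $\bigl(\tfrac{1}{|I|}\int_I W\bigr)\bigl(\tfrac{1}{|I|}\int_I W^{-1}\bigr)$. The paper itself does not supply a proof of this lemma; it simply cites \cite[Proposition~9.1.5]{MR2463316}, where the same elementary verification appears, so there is nothing further to compare.
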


In view of above, note that $\left[\lan{\f{\tau-L_k}{\lan{k}^\be}}^{2(b-1)}\right]_{A_2} = [\lan{\tau}^{2(b-1)}]_{A_2}$. 

Following are two of the celebrated results in the theory of $A_p$ weights.

\begin{lemma}\cite{MR0312139}\label{le:hilbert}
Let $p\in (1,\infty)$ and $W$ be non-negative.  Then
\[
\mathcal{H}: L^p(W) \to L^p(W) \iff W\in A_p,
\] where $\mathcal{H}$ denotes the Hilbert transform. 
\end{lemma}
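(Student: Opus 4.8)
The plan is to prove the two implications separately, since the forward direction (boundedness $\Rightarrow W\in A_p$) is elementary while the reverse direction (the Hunt--Muckenhoupt--Wheeden implication $W\in A_p \Rightarrow$ boundedness) carries essentially all of the difficulty. Throughout I write $W(E)=\int_E W(\tau)\,d\tau$ and let $M$ denote the Hardy--Littlewood maximal operator on $\br$.

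For necessity I would test the weighted inequality $\n{\mathcal{H}f}{L^p(W)}\lesssim \n{f}{L^p(W)}$ against well-chosen data. Fix an interval $I$ and let $I'$ be an adjacent interval of the same length. For $f\geq 0$ supported on $I$ and $x\in I'$ one has $|x-y|\sim|I|$ with a fixed sign of $x-y$, so $|\mathcal{H}f(x)|\gtrsim |I|^{-1}\int_I f$. Feeding this into the boundedness inequality gives
\[
\left(\f{1}{|I|}\int_I f\right)^{p}\, W(I') \;\lesssim\; \int_I |f|^p\,W.
\]
Choosing $f=(W+\varepsilon)^{-1/(p-1)}\chi_I$ and letting $\varepsilon\downarrow 0$ converts this into $W(I')\big(\int_I W^{-1/(p-1)}\big)^{p-1}\lesssim |I|^p$; swapping the roles of $I$ and $I'$ yields the $A_p$ condition \eqref{eq:a2}. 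The only care needed is the truncation keeping $f\in L^p(W)$.

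For sufficiency --- the heart of the matter --- I would follow the Coifman--Fefferman good-$\lambda$ route rather than a direct kernel estimate. The key inputs are: (i) Muckenhoupt's theorem that $W\in A_p$ implies $M$ is bounded on $L^p(W)$ (precisely the weighted maximal bound used elsewhere in this appendix); and (ii) the self-improvement and reverse H\"older property of $A_p$ weights, which in particular places $W$ in the class $A_\infty=\bigcup_{q}A_q$ and supplies the quantitative doubling needed below. With these in hand, the core step is the weighted good-$\lambda$ inequality: there exist $C,\delta>0$ depending only on $[W]_{A_p}$ so that for all $\lambda>0$ and small $\gamma>0$,
\[
W\big(\{|\mathcal{H}f|>2\lambda,\; Mf\leq \gamma\lambda\}\big)\;\leq\; C\gamma^{\delta}\,W\big(\{|\mathcal{H}f|>\lambda\}\big).
\]
This is proved by decomposing $\{|\mathcal{H}f|>\lambda\}$ into maximal Whitney intervals, estimating $\mathcal{H}f$ on each piece by splitting $f$ into a local and a far part --- the far part being essentially constant and controlled by $Mf$, the local part handled by the weak-$(1,1)$ bound for $\mathcal{H}$ --- and then transferring the resulting Lebesgue-measure gain to the weight via reverse H\"older, which is exactly where $A_\infty$ enters. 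Multiplying by $\lambda^{p-1}$ and integrating in $\lambda$ turns this into $\n{\mathcal{H}f}{L^p(W)}\lesssim \n{Mf}{L^p(W)}$, and input (i) then gives $\n{\mathcal{H}f}{L^p(W)}\lesssim \n{f}{L^p(W)}$.

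I expect the main obstacle to be establishing the weighted good-$\lambda$ inequality with constants depending only on $[W]_{A_p}$, since this bundles together the Calder\'on--Zygmund/Whitney decomposition, the weak-$(1,1)$ estimate for $\mathcal{H}$, and the passage from Lebesgue measure to $W\,d\tau$ through reverse H\"older. An alternative that sidesteps the good-$\lambda$ bookkeeping is the sparse-domination approach: one shows the pointwise bound $|\mathcal{H}f|\lesssim \sum_{Q\in\mathcal{S}}\big(\f{1}{|Q|}\int_Q |f|\big)\chi_Q$ for a sparse family $\mathcal{S}$, whose $L^p(W)$ boundedness under $W\in A_p$ follows from a short testing argument; I would keep this in reserve as it yields the same conclusion with cleaner constants. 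Either way, the necessity direction and the classical inputs (i)--(ii) are routine, and the weighted good-$\lambda$ (or sparse) estimate is where the real work lies.
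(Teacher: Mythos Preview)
Your outline is a correct sketch of the classical Hunt--Muckenhoupt--Wheeden argument: necessity by testing on adjacent intervals with $f=(W+\varepsilon)^{-1/(p-1)}\chi_I$, and sufficiency via the Coifman--Fefferman good-$\lambda$ inequality (or, alternatively, sparse domination) combined with Muckenhoupt's maximal theorem. There are no substantive gaps in what you wrote.

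That said, the paper does not prove this lemma at all. It is stated in the appendix purely as a citation of a known result from the literature (the reference \cite{MR0312139} is the original Hunt--Muckenhoupt--Wheeden paper), and is used as a black box in the proof of Proposition~\ref{pro:duhamel} to conclude that $\mathcal{H}_\tau$ is bounded on $L^2(W)$ once $W=\langle(\tau-L_k)/\langle k\rangle^\beta\rangle^{2b-2}$ is shown to lie in $A_2$. So there is nothing to compare your approach against: you have supplied a proof where the paper deliberately chose not to, treating the result as standard background. If your goal is to match the paper's level of detail, a one-line citation suffices; if your goal is self-containment, your good-$\lambda$ route is the standard one and is fine.
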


\begin{lemma}\cite[Theorem V.3.1]{MR1232192}  \label{le:stein}
Let $p\in (1,\infty)$ and $W \in A_p$.  Then
\[
\int_{\br} \abs{Mf}^p (\tau) W(\tau)\, d\tau \lesssim_{[W]_{A_p}} \int_\br \abs{f}^p(\tau) W(\tau) \, d\tau
\]
where $(Mf)(x) := \sup_{T>0} T\eta(T \,\cdot) * |f|$.
\end{lemma}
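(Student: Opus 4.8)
The plan is to reduce the smooth maximal function $M$ to the classical Hardy--Littlewood maximal operator and then run the standard Muckenhoupt argument on the weighted measure $W\,d\tau$. Since $\eta$ is bounded and supported on $[-2,2]$, for every $T>0$ we have the pointwise bound $T\eta(T\,\cdot)*|f|(x) \leq T\n{\eta}{\infty}\int_{|x-y|<2/T}|f|$, so $Mf \lesssim M_{\mathrm{HL}}f$ pointwise, where $M_{\mathrm{HL}}$ is the Hardy--Littlewood maximal function. It therefore suffices to prove $\n{M_{\mathrm{HL}}f}{L^p(W)} \lesssim_{[W]_{A_p}} \n{f}{L^p(W)}$ for $W\in A_p$.

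First I would establish the weighted weak-type $(p,p)$ inequality
\[
W\pr{\{\tau : M_{\mathrm{HL}}f(\tau) > \la\}} \leq \f{C\,[W]_{A_p}}{\la^p}\int_\br |f|^p W\, d\tau.
\]
This follows from a Vitali covering argument: cover the level set by intervals $I$ on which $\f{1}{|I|}\int_I |f| > \la$, extract a disjoint subfamily $\{I_j\}$ whose dilates cover the set, and estimate each $W(I_j)$. Applying H\"older with exponents $p,p'$ to $\f{1}{|I_j|}\int_{I_j}|f| = \f{1}{|I_j|}\int_{I_j}|f|W^{1/p}W^{-1/p}$ and then inserting the $A_p$ condition \eqref{eq:a2} converts the average of $|f|$ into a weighted average of $|f|^p$, yielding $W(I_j) \leq \f{[W]_{A_p}}{\la^p}\int_{I_j}|f|^p W$. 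Summing over the disjoint $I_j$, and using that $A_p$ weights are doubling to pass from the dilates back to the $I_j$, closes the weak-type bound.

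The passage from weak type to the strong-type bound in the statement requires the self-improving (open-ended) property of the $A_p$ classes: if $W\in A_p$ then $W\in A_{p_0}$ for some $p_0<p$, with $[W]_{A_{p_0}}$ controlled quantitatively by $[W]_{A_p}$. Granting this, the weak $(p_0,p_0)$ version of the inequality above, together with the trivial strong $(\infty,\infty)$ bound $\n{M_{\mathrm{HL}}f}{L^\infty}\leq\n{f}{L^\infty}$, can be interpolated by the Marcinkiewicz theorem applied to the measure space $(\br, W\,d\tau)$; this gives the strong $(p,p)$ bound for every exponent in $(p_0,\infty)$, in particular for the given $p$.

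The main obstacle is exactly this self-improvement, which is equivalent to a reverse H\"older inequality $\pr{\f{1}{|I|}\int_I W^{1+\ve}}^{1/(1+\ve)} \lesssim \f{1}{|I|}\int_I W$ for some $\ve>0$. Its proof is the one genuinely nontrivial ingredient: one runs a Calder\'on--Zygmund stopping-time decomposition of $W$ over an interval and uses the $A_p$ condition to force geometric decay in the distribution function of $W$, upgrading the integrability exponent past $1$. Everything else (pointwise domination, Vitali, H\"older, Marcinkiewicz) is routine. Since this lemma is precisely \cite[Theorem V.3.1]{MR1232192}, in the paper one simply cites it; the sketch above indicates the route a self-contained proof would take.
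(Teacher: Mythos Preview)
Your sketch is a correct outline of the standard Muckenhoupt argument: pointwise domination of the smooth maximal function by the Hardy--Littlewood maximal function, the weak-type $(p,p)$ bound via Vitali covering and the $A_p$ condition, and the upgrade to strong type through the reverse H\"older/self-improvement property and Marcinkiewicz interpolation. You have also correctly identified the reverse H\"older inequality as the one substantive ingredient.

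That said, there is nothing to compare with: the paper gives no proof of this lemma. It is stated in the appendix purely as a citation of \cite[Theorem~V.3.1]{MR1232192} and is invoked as a black box in the proof of Proposition~\ref{pro:cont}. Your final sentence already anticipates this. So your proposal is not an alternative to the paper's argument but rather a faithful summary of what lies behind the cited reference; in the context of this paper the appropriate ``proof'' is simply the citation.
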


Now it remains to show the following claim:
\begin{claim} \label{cl:ap}
$\lan{\tau}^{\al} \in A_2$ if $\al\in (-1,1)$.
\end{claim}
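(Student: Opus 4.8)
The plan is to verify the defining condition \eqref{eq:a2} for $p=2$ directly over an arbitrary interval, after two harmless reductions. First, since $\lan{\tau}^{\al}$ is even and $\lan{\tau}^{-\al} = \pr{\lan{\tau}^{\al}}^{-1}$, Lemma~\ref{le:graf} gives $\left[\lan{\tau}^{-\al}\right]_{A_2} = \left[\lan{\tau}^{\al}\right]_{A_2}$, so it suffices to treat $0 \le \al < 1$ (the case $\al = 0$ being trivial). Second, because membership in $A_2$ and the size of $[\,\cdot\,]_{A_2}$ are unaffected by replacing the weight by a comparable one, and since $\lan{\tau} \sim 1 + \abs{\tau}$, I would work with $W(\tau) = (1+\abs{\tau})^{\al}$, whose antiderivatives are elementary. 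This weight is even and nondecreasing in $\abs{\tau}$. For an arbitrary interval $I$ of length $h$, I set $d := \inf_{\tau \in I}\abs{\tau}$ and $D := \sup_{\tau \in I}\abs{\tau}$, so that $m := \min_{\tau\in I} W \sim (1+d)^{\al}$ and $M := \max_{\tau \in I} W \sim (1+D)^{\al}$, and I split into two regimes according to whether the weight is essentially constant on $I$.

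In the first regime, $h \le 1 + d$. Then $D \le d + h \le 1 + 2d$, so $1 + D \lesssim 1 + d$ and hence $M/m \lesssim 1$. Using monotonicity, $\f{1}{\abs{I}}\int_I W \le M^{\al}$ and $\f{1}{\abs{I}}\int_I W^{-1} \le m^{-\al}$, whence the product in \eqref{eq:a2} is at most $(M/m)^{\al} \lesssim 1$, an absolute bound. In the complementary regime $h > 1 + d$ (so in particular $h > 1$ and $d < h$), the interval is long and may cross many dyadic scales; here I estimate the two averages by genuine integration. Since $I \subset [-D,D]$ with $D \le d + h < 2h$, I compute
\[
\f{1}{h}\int_I (1+\abs{\tau})^{\al}\, d\tau \le \f{1}{h}\int_{-D}^{D}(1+\abs{\tau})^{\al}\,d\tau = \f{2}{h}\,\f{(1+D)^{1+\al}-1}{1+\al} \lesssim \f{h^{\al}}{1+\al},
\]
and analogously $\ds \f{1}{h}\int_I (1+\abs{\tau})^{-\al}\, d\tau \lesssim \f{h^{-\al}}{1-\al}$, using $1 + D \lesssim h$. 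Multiplying the two estimates, the powers of $h$ cancel and the product is $\ds \lesssim \f{1}{(1+\al)(1-\al)} = \f{1}{1-\al^2}$, which is finite precisely because $\abs{\al}<1$. Taking the supremum over all intervals and combining the two regimes then yields $\left[\lan{\tau}^{\al}\right]_{A_2} \lesssim \max\!\pr{1, \f{1}{1-\al^2}} < \infty$.

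The only substantive point—and the one where the hypothesis $\al \in (-1,1)$ is actually consumed—is the long-interval regime: the two monomial integrals behave like $(1+D)^{1\pm\al}$, and one needs \emph{both} exponents $1+\al$ and $1-\al$ to be positive so that each integral is dominated by its value at the outer endpoint $D \sim h$ and the resulting powers of $h$ cancel exactly. If $\abs{\al}$ were to reach $1$, one of these integrals would scale logarithmically or worse and the cancellation producing $\f{1}{1-\al^2}$ would fail, reflecting the known sharpness of the range. The short-interval regime, by contrast, is soft and requires only monotonicity. I expect no other difficulties, as the computation is entirely explicit once the comparison $\lan{\tau}\sim 1+\abs{\tau}$ and the duality reduction via Lemma~\ref{le:graf} are in place.
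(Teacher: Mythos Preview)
Your proof is correct and follows essentially the same two-case strategy as the paper: reduce to $\al \ge 0$ via Lemma~\ref{le:graf}, then split according to whether the weight is essentially constant on $I$ (your criterion $h \le 1+d$; the paper's criterion amounts to $\inf_{I}\abs{\tau} \gtrsim \sup_{I}\abs{\tau}$) versus the long-interval regime handled by explicit integration. One small slip: in the short-interval regime you write $M^{\al}$ and $m^{-\al}$ where you mean $M$ and $m^{-1}$, since you already defined $M,m$ as values of $W$ rather than of $1+\abs{\tau}$; the conclusion $(M/m)\lesssim 1$ is unaffected.
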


\begin{proof}
Due to Lemma~\ref{le:graf}, it suffices to show the claim for $\al\in [0,1)$.  More specifically, we must show
\[
  \pr{\f{1}{|I|} \int_I W(\tau) \, d\tau} \pr{\f{1}{|I|} \int_I W(\tau)^{-1}\, d\tau} = \f{1}{|I|^2} \int_{I\times I} \lan{\tau}^{-\al} \lan{\si}^{\al} \, d\tau \, d\si <C
\]
 where $C$ is independent of the choice of interval $I$.  Given an interval $I$, denote $M = \sup\{ |x|: x\in I\}$.
 
First, note that  the inequality is harmless when $|I|\leq 1$.  So we can assume that $|I|\sim \lan{|I|}$ for harmful cases.  We split into two cases.

\textbf{Case 1:} $ \{-\f{M}{2}, \f{M}{2}\} \not\in I$.  Note that in this case $m := \inf\{ |x|: x\in I\} \geq \f{M}{2}$.  Then, for any $\tau,\si \in I$,  $\lan{\tau}^{-\al} \lan{\si}^{\al} \lesssim  1.$  Then
\[
\f{1}{|I|^2} \int_{I\times I} \lan{\tau}^{-\al} \lan{\si}^{\al} \, d\tau \, d\si \lesssim \f{1}{|I|^2} \int_{I\times I} 1 \, d\tau \, d\si \leq 1
\]
as long as $\al\geq 0$.

\textbf{Case 2:} $ \{-\f{M}{2}, \f{M}{2}\} \in I$.  In this case, note that $ \f{M}{2} \leq |I| \leq  2M$, so $|I|\sim M$.  We can write
\[
\f{1}{|I|^2} \int_{I\times I} \lan{\tau}^{-\al} \lan{\si}^{\al} \, d\tau \, d\si \lesssim \f{1}{|I|^2} \int_{I\times I} \lan{\tau}^{-\al} \lan{M}^{\al} \, d\tau \, d\si= \f{M^{\al}}{|I|} \int_I \lan{\tau}^{-\al}\, d\tau.
\]

It remains to estimate the integral above.  Note that, $I\subseteq [-M,M]$.  So
\[
 \int_I \lan{\tau}^{-\al}\, d\tau \leq  2 \int_0^{M} \tau^{-\al}\, d\tau = \f{2}{-\al + 1}  M^{-\al+1} \sim |I|^{-\al+1}
 \]
as long as $-\al>-1 \implies \al<1$.  Combined with above computations and that $|I|\sim M$, this proves the claim.
\end{proof}

\bibliographystyle{abbrv}
\bibliography{DGBO}

\begin{thebibliography}{10}

\bibitem{MR1209299}
J.~Bourgain.
\newblock Fourier transform restriction phenomena for certain lattice subsets
  and applications to nonlinear evolution equations. {I}. {S}chr\"odinger
  equations.
\newblock {\em Geom. Funct. Anal.}, 3(2):107--156, 1993.

\bibitem{MR1215780}
J.~Bourgain.
\newblock Fourier transform restriction phenomena for certain lattice subsets
  and applications to nonlinear evolution equations. {II}. {T}he
  {K}d{V}-equation.
\newblock {\em Geom. Funct. Anal.}, 3(3):209--262, 1993.

\bibitem{MR2029909}
J.~Colliander, C.~Kenig, and G.~Staffilani.
\newblock Local well-posedness for dispersion-generalized {B}enjamin-{O}no
  equations.
\newblock {\em Differential Integral Equations}, 16(12):1441--1472, 2003.

\bibitem{MR1122309}
J.~Ginibre and G.~Velo.
\newblock Smoothing properties and existence of solutions for the generalized
  {B}enjamin-{O}no equation.
\newblock {\em J. Differential Equations}, 93(1):150--212, 1991.

\bibitem{MR2463316}
L.~Grafakos.
\newblock {\em Modern {F}ourier analysis}, volume 250 of {\em Graduate Texts in
  Mathematics}.
\newblock Springer, New York, second edition, 2009.

\bibitem{MR2860610}
Z.~Guo.
\newblock Local well-posedness for dispersion generalized {B}enjamin-{O}no
  equations in {S}obolev spaces.
\newblock {\em J. Differential Equations}, 252(3):2053--2084, 2012.

\bibitem{MR2379733}
S.~Herr.
\newblock Well-posedness for equations of {B}enjamin-{O}no type.
\newblock {\em Illinois J. Math.}, 51(3):951--976, 2007.

\bibitem{MR2754070}
S.~Herr, A.~D. Ionescu, C.~E. Kenig, and H.~Koch.
\newblock A para-differential renormalization technique for nonlinear
  dispersive equations.
\newblock {\em Comm. Partial Differential Equations}, 35(10):1827--1875, 2010.

\bibitem{MR0312139}
R.~Hunt, B.~Muckenhoupt, and R.~Wheeden.
\newblock Weighted norm inequalities for the conjugate function and {H}ilbert
  transform.
\newblock {\em Trans. Amer. Math. Soc.}, 176:227--251, 1973.

\bibitem{MR1545625}
A.~E. Ingham.
\newblock Some trigonometrical inequalities with applications to the theory of
  series.
\newblock {\em Math. Z.}, 41(1):367--379, 1936.

\bibitem{MR1086966}
C.~E. Kenig, G.~Ponce, and L.~Vega.
\newblock Well-posedness of the initial value problem for the {K}orteweg-de
  {V}ries equation.
\newblock {\em J. Amer. Math. Soc.}, 4(2):323--347, 1991.

\bibitem{MR1329387}
C.~E. Kenig, G.~Ponce, and L.~Vega.
\newblock A bilinear estimate with applications to the {K}d{V} equation.
\newblock {\em J. Amer. Math. Soc.}, 9(2):573--603, 1996.

\bibitem{MR1108503}
V.~Komornik, D.~L. Russell, and B.~Y. Zhang.
\newblock Stabilisation de l'\'equation de {K}orteweg-de {V}ries.
\newblock {\em C. R. Acad. Sci. Paris S\'er. I Math.}, 312(11):841--843, 1991.

\bibitem{MR3401014}
C.~Laurent, F.~Linares, and L.~Rosier.
\newblock Control and stabilization of the {B}enjamin-{O}no equation in
  {$L^2(\mathbb{T})$}.
\newblock {\em Arch. Ration. Mech. Anal.}, 218(3):1531--1575, 2015.

\bibitem{MR2753618}
C.~Laurent, L.~Rosier, and B.-Y. Zhang.
\newblock Control and stabilization of the {K}orteweg-de {V}ries equation on a
  periodic domain.
\newblock {\em Comm. Partial Differential Equations}, 35(4):707--744, 2010.

\bibitem{MR2784352}
J.~Li and S.~Shi.
\newblock Local well-posedness for the dispersion generalized periodic {K}d{V}
  equation.
\newblock {\em J. Math. Anal. Appl.}, 379(2):706--718, 2011.

\bibitem{MR3335395}
F.~Linares and L.~Rosier.
\newblock Control and stabilization of the {B}enjamin-{O}no equation on a
  periodic domain.
\newblock {\em Trans. Amer. Math. Soc.}, 367(7):4595--4626, 2015.

\bibitem{MR1466919}
S.~Micu and E.~Zuazua.
\newblock Boundary controllability of a linear hybrid system arising in the
  control of noise.
\newblock {\em SIAM J. Control Optim.}, 35(5):1614--1637, 1997.

\bibitem{MR2970711}
L.~Molinet and D.~Pilod.
\newblock The {C}auchy problem for the {B}enjamin-{O}no equation in {$L^2$}
  revisited.
\newblock {\em Anal. PDE}, 5(2):365--395, 2012.

\bibitem{MR1889080}
L.~Molinet and F.~Ribaud.
\newblock The {C}auchy problem for dissipative {K}orteweg de {V}ries equations
  in {S}obolev spaces of negative order.
\newblock {\em Indiana Univ. Math. J.}, 50(4):1745--1776, 2001.

\bibitem{MR1920630}
L.~Molinet and F.~Ribaud.
\newblock The global {C}auchy problem in {B}ourgain's-type spaces for a
  dispersive dissipative semilinear equation.
\newblock {\em SIAM J. Math. Anal.}, 33(6):1269--1296, 2002.

\bibitem{MR1918236}
L.~Molinet and F.~Ribaud.
\newblock On the low regularity of the {K}orteweg-de {V}ries-{B}urgers
  equation.
\newblock {\em Int. Math. Res. Not.}, (37):1979--2005, 2002.

\bibitem{MR1885293}
L.~Molinet, J.~C. Saut, and N.~Tzvetkov.
\newblock Ill-posedness issues for the {B}enjamin-{O}no and related equations.
\newblock {\em SIAM J. Math. Anal.}, 33(4):982--988 (electronic), 2001.

\bibitem{MR3397003}
L.~Molinet and S.~Vento.
\newblock Improvement of the energy method for strongly nonresonant dispersive
  equations and applications.
\newblock {\em Anal. PDE}, 8(6):1455--1495, 2015.

\bibitem{MR1214759}
D.~L. Russell and B.~Y. Zhang.
\newblock Controllability and stabilizability of the third-order linear
  dispersion equation on a periodic domain.
\newblock {\em SIAM J. Control Optim.}, 31(3):659--676, 1993.

\bibitem{MR1360229}
D.~L. Russell and B.~Y. Zhang.
\newblock Exact controllability and stabilizability of the {K}orteweg-de
  {V}ries equation.
\newblock {\em Trans. Amer. Math. Soc.}, 348(9):3643--3672, 1996.

\bibitem{MR1232192}
E.~M. Stein.
\newblock {\em Harmonic analysis: real-variable methods, orthogonality, and
  oscillatory integrals}, volume~43 of {\em Princeton Mathematical Series}.
\newblock Princeton University Press, Princeton, NJ, 1993.
\newblock With the assistance of Timothy S. Murphy, Monographs in Harmonic
  Analysis, III.

\bibitem{MR1854113}
T.~Tao.
\newblock Multilinear weighted convolution of {$L^2$}-functions, and
  applications to nonlinear dispersive equations.
\newblock {\em Amer. J. Math.}, 123(5):839--908, 2001.

\bibitem{MR2052470}
T.~Tao.
\newblock Global well-posedness of the {B}enjamin-{O}no equation in {$H^1({\bf
  R})$}.
\newblock {\em J. Hyperbolic Differ. Equ.}, 1(1):27--49, 2004.

\bibitem{MR2233925}
T.~Tao.
\newblock {\em Nonlinear dispersive equations}, volume 106 of {\em CBMS
  Regional Conference Series in Mathematics}.
\newblock Published for the Conference Board of the Mathematical Sciences,
  Washington, DC; by the American Mathematical Society, Providence, RI, 2006.
\newblock Local and global analysis.

\end{thebibliography}

\end{document}